\newcommand{\al}{\alpha}
\newcommand{\BP}{\mathcal{B}}
\newcommand{\BPQ}{\mathcal{QB}}
\newcommand{\BPQR}{\overleftarrow{\mathcal{QB}}}
\newcommand{\bbeta}{\overline{\beta}}
\newcommand{\be}{\beta}
\newcommand{\bo}{\mathbf{1}}
\newcommand{\dir}[1]{\mathrm{dir}(#1)}
\newcommand{\ep}{\mathrm{end}}
\newcommand{\HH}{\mathcal{H}}
\newcommand{\hJ}{\hat{J}}
\newcommand{\id}{\mathrm{id}}
\newcommand{\la}{\lambda}
\newcommand{\ord}{\mathrm{ord}}
\newcommand{\pair}[2]{\langle #1\,,\,#2\rangle}
\newcommand{\QB}{\mathrm{QB}}
\newcommand{\qwt}{\mathrm{qwt}}
\newcommand{\R}{\mathbb{R}}
\newcommand{\tE}{\tilde{E}}
\newcommand{\wa}{\vec{w}}
\newcommand{\wt}{\mathrm{wt}}
\newcommand{\Z}{\mathbb{Z}}
\newcommand{\Aut}{\mathrm{Aut}}
\newcommand{\Daut}{\Pi}
\newcommand{\ds}{*}
\newcommand{\fixit}[1]{\texttt{{\color{red} *** #1 *** }}}
\newcommand{\Hom}{\mathrm{Hom}}
\newcommand{\Inv}{\mathrm{Inv}}
\newcommand{\K}{\mathbb{K}}
\newcommand{\Pol}{\mathcal{V}}
\newcommand{\Q}{\mathbb{Q}}
\newcommand{\rd}{\mathrm{red}}
\newcommand{\rootco}{\eta}
\newcommand{\spc}{\mathrm{sp}}
\newcommand{\tX}{\tilde{X}}
\newcommand{\tY}{\tilde{Y}}
\newcommand{\Xv}{X^\ds}
\renewcommand{\tilde}{\widetilde}
\renewcommand{\comment}[1]{}
\renewcommand{\H}{\mathcal{H}}
\newcommand{\Ind}{\mathrm{Ind}}
\newcommand{\de}{\delta}
\newtheorem{lem}{Lemma}
\newtheorem{thm}[lem]{Theorem}
\newtheorem{prop}[lem]{Proposition}
\newtheorem{cor}[lem]{Corollary}
\theoremstyle{remark}
\newtheorem{rem}[lem]{Remark}
\newtheorem{ex}[lem]{Example}
\numberwithin{equation}{section}
\numberwithin{lem}{section}
\title{Specializations of nonsymmetric Macdonald-Koornwinder polynomials}
\author{D. Orr}
\address{
Department of Mathematics, MC 0123,
460 McBryde Hall, Virginia Tech,
225 Stanger St., Blacksburg, VA 24061 USA}
\email{dorr@vt.edu}
\author{M. Shimozono}
\address{Department of Mathematics, MC 0123,
460 McBryde Hall, Virginia Tech,
225 Stanger St., Blacksburg, VA 24061 USA}
\email{mshimo@math.vt.edu}
\begin{document}

\begin{abstract} This work records the details of the Ram-Yip formula for nonsymmetric Macdonald-Koornwinder polynomials for the double affine Hecke algebras of not-necessarily-reduced affine root systems. It is shown that the $t\to0$ equal-parameter specialization of nonsymmetric Macdonald polynomials admits an explicit combinatorial formula in terms of quantum alcove paths, generalizing the formula of Lenart in the untwisted case. In particular our formula yields a definition of quantum Bruhat graph for all affine root systems. For mixed type the proof requires the Ram-Yip formula for the nonsymmetric Koornwinder polynomials. A quantum alcove path formula is also given at $t\to\infty$. As a consequence we establish the positivity of the coefficients of nonsymmetric Macdonald polynomials under this limit, as conjectured by Cherednik and the first author. Finally, an explicit formula is given at $q\to\infty$, which yields the $p$-adic Iwahori-Whittaker functions of Brubaker, Bump, and Licata.
\end{abstract}

\maketitle

%\tableofcontents

\section{Introduction}

\subsection{Ram-Yip formula}
The nonsymmetric Macdonald-Koornwinder polynomials $E_\la(X;q;t_\bullet)$ 
\cite{Che:1995} \cite{Mac:1996} \cite{O} \cite{Sah}
form a remarkable basis of the polynomial module of the double affine Hecke algebra (DAHA). The Ram-Yip formula \cite{RY} gives an explicit expression for $E_\lambda(X;q;t_\bullet)$ that is particularly suitable for combinatorial study. Although in \cite{RY} the formula was stated for the equal-parameter DAHAs of reduced affine root systems, it was intended to work for general DAHAs, and indeed it does. The first goal of this paper is to record the details of the generalization of the Ram-Yip formula to the unequal parameter nonreduced case, which includes the Koornwinder polynomials. A similar generalization is given for the symmetric Macdonald-Koornwinder polynomials $P_\lambda(X;q;t_\bullet)$.

We study the behavior of the Ram-Yip formula at various specializations. At $t\to0$, $E_\lambda(X;q;0)$ and $P_\lambda(X;q;0)$ are related to affine Demazure characters \cite{I:2003} \cite{San}, Kirillov-Reshetikhin characters \cite{FL} \cite{LNSSS}, characters of Weyl modules of current algebras \cite{ChaFK} \cite{FL}, and projected level zero Lakshmibai-Seshadri paths \cite{LNSSS}. We give a formula for $E_\lambda(X;q;0)$ in terms of quantum alcove paths, generalizing to arbitrary affine root systems the formulas of \cite{Len} \cite{LenSc} for $P_\la(X;q;0)$ in the untwisted case.

At $t\to\infty$ the polynomials $E_\lambda(X;q;\infty)$ arise in the study of the finite-difference Toda lattice and are conjecturally related to the PBW filtration of affine Demazure modules \cite{CF} \cite{CO}. We give an alcove path formula for $E_\lambda(X;q;\infty)$ in terms of reverse paths in the quantum Bruhat graph. This establishes the positivity of the coefficients of $E_\lambda(X;q;\infty)$ for all affine root systems, as conjectured in \cite{CO} in the dual untwisted setting.

At $q\to\infty$, the $E_\lambda(X;\infty;t)$ for untwisted affine root systems are the $p$-adic Iwahori-Whittaker functions of Brubaker, Bump, and Licata \cite{BBL}.\footnote{In \cite{BBL}, the authors use $E_\lambda(X;q^{-1};t^{-1})$ and correspondingly send $q\to0$;
\cite{I:2008} also uses this convention for $q$ and $t$.} We give an alcove path formula for the specialization $q\to\infty$ valid for all affine root systems. We also consider the limit $q\to0$, which is the case that inspired the Ram-Yip formula: $P_\lambda(X;0;t)$ is the spherical function known as the Hall-Littlewood polynomial, and the Ram-Yip formula degenerates to Schwer's formula \cite{Sc}. The specializations $E_\la(X;0;t)$ and $E_\la(X;\infty;t)$ also have interpretations as standard and dual standard bases in Kazhdan-Lusztig theory \cite{I:2008}.

We would be remiss not to mention the formula of Haglund, Haiman, and Loehr \cite{HHL} for $E_\lambda(X;q;t)$ in type $A$, and the many deep related works for the modified Macdonald polynomials.

Thanks to Bogdan Ion, Arun Ram, and Siddartha Sahi, for patient explanations about the double affine Hecke algebra.
Thanks to Anne Schilling and Nicolas Thi\'ery and for implementing nonsymmetric Macdonald polynomials in \texttt{sage} \cite{Sage-Combinat}; we used their program extensively.
Thanks to ICERM, which provided the venue for the above activities. Thanks to Christian Lenart, Satoshi Naito, Daisuke Sagaki, and Anne Schilling for related collaborations.
Thanks to Ivan Cherednik for helpful discussions.
The second author thanks the NSF for the support from grant
NSF DMS-1200804.

\section{DAHA}
Our exposition of the DAHA follows Haiman \cite{Hai} but with important notational differences. We consistently name an object according to the relations it satisfies or by its behavior, regardless of the origins of its ingredients.

\subsection{Cartan data}
A Cartan datum is a pair $(I,A)$ where $I$ is a finite set
and $A=(a_{ij}\mid i,j\in I)$ is a generalized Cartan matrix, that is, an integer matrix with $a_{ii}=2$ for all $i\in I$, and for all $i,j\in I$ with $i\ne j$, $a_{ij}\le 0$
with $a_{ij}<0$ if and only if $a_{ji}<0$. This defines the Weyl group $W=W(I,A)$, the Coxeter group generated by elements $s_i$ for $i\in I$ satisfying $s_i^2=1$ and
for $i\ne j$, $(s_i s_j)$ has order $2,3,4,6,\infty$
according as $a_{ij} a_{ji}$ has value $0,1,2,3,\ge 4$.

\begin{ex}
\label{X:D2Cartan} The Cartan datum for the dual untwisted affine root system $D_{2+1}^{(2)}$ has affine Dynkin node set $I=\{0,1,2\}$ with Cartan matrix
\begin{align*}
\begin{pmatrix}
2 & -2 & 0 \\
-1 & 2 & -1 \\
0 & -2 & 2
\end{pmatrix}.
\end{align*}
The affine Weyl group $W_a(D_{2+1}^{(2)})$ is generated by involutions $s_0,s_1,s_2$ with braid relations $(s_0s_1)^4=(s_1s_2)^4=(s_0s_2)^2=\id$.
\end{ex}

\subsection{Root data}
Given a Cartan datum $(I,A)$, a root datum is a triple
$(X,\{\alpha_i\mid i\in I\}, \{\alpha_i^\vee\mid i\in I\})$
where $X$ is a lattice (free $\Z$-module) containing elements
$\alpha_i$ (simple roots) for $i\in I$, and $\alpha_i^\vee$ (simple coroots) are elements
in $\Xv=\Hom_\Z(X,\Z)$ such that
\begin{align}\label{E:Cartan}
  \pair{\alpha_i^\vee}{\alpha_j} = a_{ij} \qquad\text{for $i,j\in I$}
\end{align}
where $\pair{\cdot}{\cdot}$ is the evaluation pairing.

The Weyl group $W=W(I,A)$ acts on $X$ and $\Xv$ by
$s_i(\la)=\la-\pair{\alpha_i^\vee}{\la}\alpha_i$
and $s_i(\mu)=\mu-\pair{\mu}{\alpha_i}\alpha_i^\vee$
for $i\in I$, $\la\in X$, and $\mu\in \Xv$. The pairing
$\pair{\cdot}{\cdot}$ is $W$-invariant.

Denote by $R=R(X)= \bigcup_{i\in I} W(\alpha_i)$ the set of real roots. For $\alpha = w(\alpha_i)\in R(X)$, denote by
$\alpha^\vee=w(\alpha_i^\vee)\in \Xv$ its associated coroot and by $s_\alpha=w s_i w^{-1}\in W$ its associated reflection. We have $R = R_+ \bigsqcup -R_+$
where $R_+ = R \cap (\bigoplus_{i\in I} \Z_{\ge0} \alpha_i)$ is the set of positive real roots.

\begin{ex} \label{X:D2root} Consider the free $\Z$-module
$\tX=\Z\delta\oplus \Z e_1 \oplus \Z e_2$.
Define $\alpha_0=\delta-e_1$,  $\alpha_1=e_1-e_2$, $\alpha_2=e_2$. Let $\alpha_0^\vee,\alpha_1^\vee,\alpha_2^\vee\in \tX^\ds = \Hom_\Z(\tX,\Z)$ be defined by 
$\alpha_0^\vee=-2e_1^*$,
$\alpha_1^\vee = e_1^*-e_2^*$ and $\alpha_2^\vee=2e_2^*$. This yields a root datum for the Cartan datum of Example \ref{X:D2Cartan}. The affine Weyl group $W_a(\tX)$ of Example \ref{X:D2Cartan} acts on the set $R$ of real roots by the orbits $W_a(\tX)(\alpha_0) = (2\Z+1)\delta + W_0 \alpha_2$, $W_a(\tX)(\alpha_1) = 2\Z\delta + W_0 \alpha_1$, 
$W_a(\tX)(\alpha_2) = 2\Z\delta + W_0 \alpha_2$.
\end{ex}

\subsection{Double affine data}
\label{SS:doubleaffinedata}
A \textit{double affine datum} consists of two root data
\begin{align*}
(X,\{\alpha^X_i\},\{\alpha_i^{\vee X} \})\qquad
(Y,\{\alpha^Y_i\},\{\alpha_i^{\vee Y} \})
\end{align*}
of irreducible reduced finite type together with
an isomorphism of finite Weyl groups $W_0(X)\cong W_0(Y)$ 
via $s_i^X \mapsto s_i^Y$ for all $i\in I_0$ where $I_0$ is the finite Dynkin node set. The isomorphism $W_0(X)\cong W_0(Y)$ implies that either $X$ and $Y$ have the same Cartan data or dual Cartan data, the latter meaning that the Cartan matrices are transposes of each other.

We require that $X$ and $Y$ have rank $|I_0|$, so that
\begin{align}
\label{E:Xbetween}
Q^X \subset &\ X \subset P^X \\
\label{E:Ybetween}
Q^Y \subset &\ Y \subset P^Y
\end{align}
where $Q$ and $P$ denote the root and weight lattices respectively.

Let $\omega_i^X\in P^X$ for $i \in I_0$ be the fundamental weights, which are defined by
$\pair{\alpha_j^{\vee X}}{\omega_i^X}=\delta_{ij}$ for all $j\in I_0$. Let $P^X_+=\bigoplus_{i\in I_0} \Z_{\ge0} \omega_i^X$ be the set of dominant weights of $X$ and $X_+ = P^X_+ \cap X$. Let $P^X_-=-P^X_+$ and $X_-=-X_+$ be the corresponding sets of antidominant weights. We make similar definitions for $Y$.

\subsection{Pair of affine root data}
\label{SS:affinerootdata}
For a double affine datum $(X,Y)$, let
$\tX_\rd$ and $\tY_\rd$ be the associated pair of irreducible reduced affine root data, defined
as follows. Let $I=I_0 \cup \{0\}$ be the affine
Dynkin node set, with distinguished affine node $0\in I$.

Suppose $X$ and $Y$ have dual Cartan data.
Let $\tX$ (resp. $\tY$) stand for the affine Cartan data obtained from $X$ (resp. $Y$) by untwisted affinization. In this case we say that $(X,Y)$ is of untwisted type.

Suppose $X$ and $Y$ share a common Cartan datum.
Then the common affine Cartan data of $\tX$ and $\tY$ is  obtained from $X$ by taking the dual, then the untwisted affinization, and then the affine dual. The result is a Cartan datum of dual untwisted
(the dual of an untwisted) affine type. Hence we say that such $(X,Y)$ has dual untwisted type.

\begin{ex} \label{X:dad}
\begin{enumerate}
\item 
Let $X$ and $Y$ be of type $A_n$. Then $\tX$ and $\tY$ are of type $A_n^{(1)}$.
\item 
Let $X$ and $Y$ be of type $B_n$. Then $\tX$ and $\tY$ are of type $D_{n+1}^{(2)}$.
\item
Let $X$ and $Y$ be of types $B_n$ and $C_n$ respectively. Then $\tX$ and $\tY$ are of types $B_n^{(1)}$ and $C_n^{(1)}$ respectively.
\end{enumerate}
\end{ex}

\begin{rem} \label{R:nomixed} The mixed affine root systems
$A_{2n}^{(2)}$ (with $\alpha_0$ extra short) and
its dual $A_{2n}^{(2)\dagger}$ do not occur as either
$\tX$ or $\tY$. Their Macdonald polynomials may be obtained by specializations of Koornwinder polynomials, which come from the double affine datum $(Q(B_n),Q(B_n))$. See
Remark \ref{R:mixedtype} and \S \ref{SS:A2}, \ref{SS:A2dag}.
\end{rem}

Let $\delta^X = \alpha_0^X + \theta^X$ be the affine null root, where $\theta^X\in X$. In the untwisted case, $\theta^X$ is the high root and in the dual untwisted case, it is the dominant short root, that is, the root associated with the highest coroot. Define the lattice $\tX= X \oplus \Z\delta^X$. For $i\in I_0$, let $\alpha_i^\vee \in \tX^\ds:=\Hom_\Z(\tX,\Z)$ be defined by the extension of $\alpha_i^\vee\in \Xv$ to $\tX$ by $\pair{\alpha_i^\vee}{\delta^X}=0$.
Define $\alpha_0^\vee$ by $\pair{\alpha_0^\vee}{\alpha_j}=a_{0j}$ (the Cartan matrix entry) for $j\in I_0$ and $\pair{\alpha_0^\vee}{\delta^X}=0$.
One may show that this implies $\pair{\alpha_0^\vee}{\alpha_0}=2$, thereby defining an affine root datum 
$(\tX,\{\alpha_0,\dotsc,\alpha_n\},\{\alpha_0^\vee,\dotsc,\alpha_n^\vee\})$. Similar definitions apply for $\tY$.

Let $\vartheta^X\in X$ be the dominant short root. 
Thus, in the untwisted case $\vartheta^X$
is the root associated to $\theta^Y$ (viewed as a coroot of $X$), and in the dual untwisted case
$\vartheta^X=\theta^X$.

For $\mu=\be+a\de^Y\in\tY$ with $\beta\in Y$, let $\overline{\mu}=\beta$ be the projection to the classical weight lattice.
Define the function $\deg: \bigoplus_{i\in I^Y} \Z \alpha_i^Y \to \Z$ by
\begin{align}\label{E:deg}
  \deg(a\delta^Y + \beta) = a\qquad\text{where $\beta\in Y$}.
\end{align}

\subsection{Reduced affine root systems}
Let $W_a=W_a(\tX)$ denote the (nonextended) affine Weyl group, which is generated by reflections $s_i^X$ for $i\in I^X$, and denote by $R_\rd(\tX) = \bigcup_{i\in I} W_a(\alpha_i)$ the set of real roots. The subscript $\rd$ is used to distinguish this set of roots (the real roots in the reduced affine root system) from a possibly larger set of roots $R(\tX)$ which shall be defined later.
Let $R_\rd(\tY)$ be defined similarly.

\subsection{Pairing}\label{SS:pairing}
For a lattice $L$ let $L_\Q = \Q\otimes_\Z L$.
The goal is to define a $W_0$-invariant pairing $(\cdot,\cdot): Y_\Q \times X_\Q \to \Q$.
By \eqref{E:Xbetween} and \eqref{E:Ybetween}
$X_\Q = Q^X_\Q$ and $Y_\Q = Q^Y_\Q$, so  it suffices to define $(,)$ on $Q^Y \times Q^X$. 

For any $\alpha\in R_+(Y)$, let $\gamma_\alpha\in \Z_{>0}$ be minimum such that $\gamma_\alpha \delta^Y -\alpha \in R_\rd(\tY)$. For $i\in I_0$ define $\gamma_i=\gamma_{\alpha_i}$. 
 Define
\begin{align}
\label{E:rootco}
  \rootco: Q^Y &\to Q^{\vee X} \\
  \rootco(\alpha_i^Y) &= \gamma_i \alpha_i^{X\vee}\qquad\text {for $i\in I_0$}
\end{align}
where $Q^\vee$ is the coroot lattice. The map $\eta$ is a $W_0$-equivariant embedding. Define
\begin{align}\label{E:pairing}
  (\mu,\la) = \pair{\rootco(\mu)}{\la}\qquad\text{for $\mu\in Q^Y$, $\la\in Q^X$}
\end{align} 
where $\pair{\cdot}{\cdot}$ is the evaluation pairing on $Q^{\vee X}\times Q^X$. In the dual untwisted case, $(,)$ is the $W_0$-invariant pairing on $Q^X \times Q^X$
under which short roots have square length $2$. 

\begin{rem}\label{R:DUpair} 
\begin{enumerate}
\item
If $(X,Y)$ is untwisted, $\gamma_i=1$ for all $i\in I_0$ and the map $\eta$ is a canonical $W_0$-equivariant isomorphism.

\item If $(X,Y)$ is dual untwisted, $(\gamma_i\mid i\in I_0)$ is the Cartan symmetrizer, the unique $I_0$-tuple of positive integers such that $\gamma_i=1$ for $\alpha_i^Y \in Y$ short and the matrix $(\gamma_i a_{ij}\mid i,j\in I_0)$ is symmetric, where $(a_{ij}\mid i,j\in I_0)$ is the Cartan matrix for $Y$. Moreover, for $\alpha_i^Y$ long,
$\gamma_i=r$, where $r\in \{2,3\}$ is the multiplicity of the multiple bond in the Dynkin diagram of $Y$, or equivalently, the twist of the affine root system $\tilde{Y}$, e. g., the $2$ in $D_{n+1}^{(2)}$ or the 3 in $D_4^{(3)}$ in the notation of \cite{Kac}.
By symmetry and $W_0$-invariance, for any root $\beta\in R(X)\subset Q^X=Q^Y$ and weight $\mu\in P^Y=P^X$ we have
\begin{align}\label{E:pair}
  (\mu,\beta) = (\beta,\mu) = 
  \begin{cases} 
  \pair{\beta^\vee}{\mu} & \text{if $\beta$ is short} \\
  r \pair{\beta^\vee}{\mu} & \text{if $\beta$ is long.}
  \end{cases}
\end{align}
\end{enumerate}
\end{rem}

Fix $m\in \Z_{>0}$ such that $(Y,X)\subset (1/m)\Z$. By Remark \ref{R:DUpair}, we have $(Y,Q^X)\in\Z$.

\subsection{Affine Weyl groups}
For the affine root datum $\tX$, denote by
$W_a(\tX)$ and $W_e(\tX)$ its affine and extended affine Weyl groups respectively. 

If $\tX$ is untwisted, $W_a(\tX)=W_a\cong W_0 \ltimes Q^\vee$ and $W_e(\tX)=W_e\cong W_0 \ltimes P^\vee$ where $Q^\vee$ and $P^\vee$ are the coroot and coweight lattices of the classical subsystem $X$.
For $\mu \in P^\vee$, we write $t_\mu$ for the corresponding element of $W_e(\tX)$ given by this decomposition. One has
\begin{align}\label{E:tmux}
t_\mu(x)=x-\pair{\mu}{x}\de\qquad\text{for $\mu\in P^\vee$, $x\in Q$}
\end{align}
and $t_\mu(\de)=\de$.

There is an isomorphism $W_e \cong \Daut \ltimes W_a$ where
$\Daut$ is the subgroup of length zero elements in $W_e$.
The group $\Daut$ may be realized as a subgroup of the group $\Aut=\Aut(\tX)$ of automorphisms of the affine Dynkin diagram, expressed as permutations of $I$. 
Then $\Daut$ acts on $W_a$ by affine Dynkin automorphisms:
for $\pi\in \Daut$ and $i\in I$, $\pi s_i \pi^{-1}=s_{\pi(i)}$.

To describe the group $\Daut$ explicitly,
say that a node $i\in I$ is \textit{special} if
there is an element of $\Aut$ which sends $0$ to $i$. 
Denote by $I^\spc$ the subset of special nodes.
There is an isomorphism $\Daut \cong P^\vee/Q^\vee$.
There is a bijection $I^\spc\to P^\vee/Q^\vee$ given by $i\mapsto \omega_i^\vee+Q^\vee$
where $\omega_i^\vee$ is the fundamental coweight (and $\omega_0^\vee=0$ by convention). For $i\in I^\spc$,
addition by $\omega_i^\vee+Q^\vee$ on $P^\vee/Q^\vee$ induces a permutation of the set $I^\spc$, which extends uniquely to a permutation of $I$ that defines an element $\pi_i\in\Aut$. Then $\Daut=\{\pi_i\mid i\in I^\spc\}$. We have
\begin{align}\label{E:autotransfinite}
  \pi_i &= t_{\omega_i^\vee} u_{\omega_i^\vee}^{-1}\qquad\text{for $i\in I^\spc$}
\end{align}
where, for $\la\in P^\vee$, $u_\la\in W_0$ is the shortest element such that $u_\la(\la)$ is antidominant.

The dual untwisted case is very similar. We have
$W_a \cong W_0 \ltimes Q$ and $W_e\cong W_0\ltimes P$
where $Q$ and $P$ are the root and weight lattices of the classical subsystem. The rest is entirely similar to the
untwisted case except that coroots and coweights are replaced by roots and weights. The general version of \eqref{E:tmux} is given by \eqref{E:tmuxx} below.

\subsection{Affine Weyl groups for double affine data}
Let $(X,Y)$ be a double affine datum and $(\tX,\tY)$ the
associated pair of affine root data.
In light of \eqref{E:Xbetween} and \eqref{E:Ybetween}
and the identification of the coroot/coweight
lattice with the root/weight lattice of dual finite type,
we have (by definition for $W(\tilde{X})$)
\begin{align}
\label{E:WatX}
W_a(\tX) &= W_0 \ltimes Q^Y \\
\label{E:WtX}
W(\tX) &= W_0 \ltimes Y  \\
\label{E:AuttX}
&= \Daut^X \ltimes W_a(\tX)  \\
\label{E:WetX}
W_e(\tX) &= W_0 \ltimes P^Y  \\
\label{E:AutetX}
&= \Daut^X_e \ltimes W_a(\tX) 
\end{align}
where $\Daut^X \cong Y/Q^Y$ is the restriction of the isomorphism $\Daut^X_e \cong P^Y/Q^Y$. For $i\in I^{X\spc}$ let $\pi_i^X\in \Daut^X_e$ denote the corresponding special automorphism.

The action of $Q^Y\subset W_a(\tX)$ on $R(\tX)$ is given explicitly by
\begin{align}\label{E:tmuxx}
t_\mu(x) = x - (\mu,x)\delta^X \qquad \text{for $\mu\in Q^Y$,
$x\in Q^X$}
\end{align}
and $t_\mu(\de^X)=\de^X$.
Recall that $m\in \Z_{>0}$ was chosen to satisfy $(Y,X)\subset (1/m)\Z$.
We extend \eqref{E:tmuxx} to get an action of $t_\mu$ on $X\oplus\Z(1/m)\de^X$ for $\mu\in Y$, where $t_\mu$ is the corresponding element of $W(\tX)$; this action preserves $R(\tX)$.
From this point on $\tX$ will denote the lattice $X\oplus\Z(1/m)\de^X$. All of the constructions above remain the same for this larger lattice.

We make similar definitions in which $X$ and $Y$ are exchanged. In particular, $s_0^Y \in W(\tY)=X\rtimes W_0$ is given by
\begin{align}\label{E:s0Y}
  s_0^Y = t_\vartheta s_\vartheta
\end{align}
where $\vartheta=\vartheta^X\in X$ is the dominant short root.
We also extend the affine weight lattice to $\tY=Y\oplus\Z(1/m)\de^Y$, as above.

\begin{rem}\label{R:labeling}
The notation $\Daut^X$ serves as a reminder that $\Daut^X$ is a subgroup of $W(\tX)$ and acts on the nodes $I^X$ by automorphisms of the affine Dynkin diagram of $\tX$.
In general, we use a superscript of $X$ or $Y$ to show which (affine) root system a Weyl
group element naturally acts upon. Thus we denote the simple reflections in $W_a(\tX)$ by
$s_i^X$ for $i\in I^X$. Below we will use similar notation to label certain elements in
the DAHA.
\end{rem}

\begin{rem}\label{R:oppositelattice} Note that $X$ is the translation lattice for $W(\tilde{Y})$ and $Y$ is the translation lattice for $W(\tilde{X})$.
\end{rem}

\begin{ex}\label{X:D2trans}
For $(X,Y)=(P(B_2),P(B_2))$ we have $W(\tY)=P(B_2) \rtimes W_0 \cong \Pi^Y_e \ltimes W_a(\tY)$.
$\Pi^Y_e$ is generated by the element $\pi^Y_2$ that acts on $I^Y$ by exchanging $0$ and $2$. $W_a(\tX)$ is as in Example \ref{X:D2Cartan}. 
We have $s_0^Y = t_\vartheta s_\vartheta$ where $\vartheta = e_1$ in the notation of Example \ref{X:D2root}. Since $\vartheta = s_1(\alpha_2)$ we have $s_\vartheta=s_1s_2s_1$ and $t_{-\vartheta} = s_\vartheta s_0^Y =s_1s_2s_1s_0^Y$.
\end{ex}

\begin{lem}\label{L:reducedorbitafex} Let $(X,Y)$ be a
double affine datum. Then
$W_a(\tX)$ and $W_e(\tX)$ have the same orbits on $R_\rd(\tX)$ (consisting of long roots and of short roots)
except for $\tX=A_1^{(1)},C_n^{(1)}, D_{n+1}^{(2)}$. For each such $\tX$, $\Pi_e^X$ is generated by the involution $\pi_n^X$ that reverses the affine Dynkin diagram (and in particular exchanges nodes $0$ and $n$), and the orbit $W_e(\tX)(\alpha_n) = \Z\delta+ W_0 \alpha_n$ is the disjoint union of the orbits
$W_a(\tX)(\alpha_n) =2\Z\delta+W_0\alpha_n$ and
$W_a(\tX)(\alpha_0) =(2\Z+1)\delta+W_0\alpha_n$.
\end{lem}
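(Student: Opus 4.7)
The plan is to exploit the semidirect product $W_e(\tX) = \Pi_e^X \ltimes W_a(\tX)$ from \eqref{E:AutetX}. Since $\Pi_e^X$ normalizes $W_a(\tX)$, each $W_e$-orbit on $R_\rd(\tX)$ is a union of $W_a$-orbits permuted by $\Pi_e^X$, so the argument breaks into (i) describing the $W_a$-orbits on $R_\rd(\tX)$ and (ii) tracking the $\Pi_e^X$-action on them.

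For (i), write $W_a = W_0 \ltimes Q^Y$. The classical projection $\alpha + k\de^X \mapsto \alpha$ is $W_a$-equivariant, with $W_0$ acting transitively on each length class of $R(X)$. By \eqref{E:tmuxx}, the translation $t_\mu$ for $\mu \in Q^Y$ sends $\alpha$ to $\alpha - (\mu,\alpha)\de^X$, so the $\de^X$-coefficients appearing in a $W_a$-orbit form a coset of the subgroup $\{(\mu,\alpha) : \mu \in Q^Y\} \subseteq \Z$, which by $W_0$-invariance of the pairing equals $c_\alpha \Z$ for some positive integer $c_\alpha$ depending only on the length class of $\alpha$. Letting $d_\alpha$ be the minimal positive $k$ with $\alpha + k\de^X \in R_\rd(\tX)$ (so $d_\alpha = 1$ in the untwisted case, $d_\sh = 1$ and $d_\lo = r$ in the dual untwisted case of twist $r$), the number of $W_a$-orbits in the length class of $\alpha$ equals $c_\alpha / d_\alpha$. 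Using the formula for $\eta$ with Cartan-symmetrizer factors $\gamma_i$ (Remark \ref{R:DUpair}), direct type-by-type computation shows $c_\alpha / d_\alpha = 2$ precisely in the cases: both length classes of $A_1^{(1)}$; the long class of $C_n^{(1)}$; and the short class of $D_{n+1}^{(2)}$. In these cases the length class splits into two $W_a$-orbits distinguished by the parity of the $\de^X$-coefficient; otherwise $c_\alpha / d_\alpha = 1$ and the length class is a single $W_a$-orbit. (Notably, in $A_{2n-1}^{(2)}$ one has $c_\lo = 2$ but also $d_\lo = 2$, so no splitting occurs there.)

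For (ii), $\Pi_e^X \cong P^Y/Q^Y$ is realized by affine Dynkin automorphisms. Outside the three exceptional types, every length class is already a single $W_a$-orbit by (i), so no merging is possible and $W_e$-orbits coincide with $W_a$-orbits. In the three exceptional types, $P^Y/Q^Y \cong \Z/2$ and its nontrivial element is the diagram-reversing involution $\pi_n^X$ exchanging nodes $0$ and $n$; the simple roots $\alpha_0$ and $\alpha_n$ share a length class (the unique class in $A_1^{(1)}$, long in $C_n^{(1)}$, short in $D_{n+1}^{(2)}$) but lie in opposite-parity $W_a$-orbits, so $\pi_n^X$ merges them. The explicit orbit formulas follow: computing $(Q^Y,\alpha_n) = 2\Z$ gives $W_a(\tX)(\alpha_n) = 2\Z\de^X + W_0\alpha_n$, and writing $\alpha_0 = \de^X - \theta^X$ (with $\theta^X$ the highest root or dominant short root, as appropriate) and applying $t_\mu$ analogously gives $W_a(\tX)(\alpha_0) = (2\Z+1)\de^X + W_0\alpha_n$; their union is $\Z\de^X + W_0\alpha_n = W_e(\tX)(\alpha_n)$. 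The principal obstacle is the systematic case check of $c_\alpha/d_\alpha$ and of the action of $\Pi_e^X$ on simple roots, which though routine requires careful distinction between untwisted and dual untwisted types via the symmetrizer factors $\gamma_i$, relying on the framework of \S\ref{SS:pairing}--\ref{SS:affinerootdata} and Remark \ref{R:DUpair}.
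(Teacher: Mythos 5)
Your proof is correct, but it takes a genuinely different route from the paper's. The paper argues diagrammatically: it observes that in any irreducible reduced finite type system the short (resp.\ long) nodes form a subdiagram connected by simple bonds, that a simple bond between $i$ and $j$ forces $W_a(\tX)(\alpha_i)=W_a(\tX)(\alpha_j)$, and hence that the $W_a$- and $W_e$-orbit structures already agree whenever the affine node $0$ attaches to the rest of the diagram by a simple bond; the exceptional list $A_1^{(1)}, C_n^{(1)}, D_{n+1}^{(2)}$ is exactly where $0$ attaches by a multiple bond, and those cases are checked directly. You instead work arithmetically with the decomposition $W_a(\tX)=W_0\ltimes Q^Y$ and the translation formula \eqref{E:tmuxx}, counting $W_a$-orbits inside a length class as the index $c_\alpha/d_\alpha$ of the value group $(Q^Y,\alpha)$ inside the group of admissible $\de^X$-coefficients from Lemma \ref{L:affineroots}. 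Your computations check out (including the correct observation that $A_{2n-1}^{(2)}$ has $c_\lo=d_\lo=2$ and so does not split), and your method has the advantage of producing the explicit orbit descriptions $2\Z\de+W_0\alpha_n$ and $(2\Z+1)\de+W_0\alpha_n$ as an immediate byproduct rather than as a separate verification; the paper's diagrammatic criterion isolates the exceptional types faster and with less computation, but leaves both the splitting and the explicit orbits to an unwritten case check. Both arguments ultimately rest on a finite type-by-type inspection, yours of the quantities $c_\alpha/d_\alpha$, the paper's of the bond attaching node $0$.
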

\begin{proof} One may check that for every irreducible reduced root system of finite type, the subgraph of the Dynkin diagram obtained by restriction to nodes $i$ for $\alpha_i$ short (resp. long) is connected by simple bonds.
Whenever $i,j\in I$ are joined by a simple bond then $s_i s_j \alpha_i=\alpha_j$ and $W_a(\tX)(\alpha_i)=W_a(\tX)(\alpha_j)$.
We deduce that the orbit structure of $W_e(\tX)$ and $W_a(\tX)$ are the same if $0$ is connected to another node by a simple bond. The alternative is that $0$ is connected by a double bond. Since $\tX$ is untwisted or dual untwisted, the only choices for $\tX$ are $A_1^{(1)}$, $C_n^{(1)}$, and $D_{n+1}^{(2)}$. In each case one checks the facts recorded in the Lemma.
\end{proof}

\subsection{Nonreduced affine root systems} Given a double affine datum $(X,Y)$, define the set of \textit{doubled} nodes
\begin{align}
\label{E:Xdoubleset}
  S^X &= \{i\in I^X\mid \alpha_i^{\vee X} \in 2 \tX^\ds \} \\
\label{E:doublesetalt}
  &= \{ i\in I^X \mid \pair{\alpha_i^{\vee X}}{X}\subset 2\Z\}.
\end{align}
The sets \eqref{E:Xdoubleset} and \eqref{E:doublesetalt} are equal because $\pair{\alpha_i^\vee}{\delta}=0$ for all $i\in I$.

\begin{ex} \label{X:red} In the definition of $S^X$ 
the choice of the lattice $X$ between $Q^X$ and $P^X$, is crucial. Suppose $X$ has type $B_n$ and $Y$ has type $C_n$. Then $\tX$ has type $B_n^{(1)}$.
If $X=Q(B_n)$ then $S^X=\{n\}$ but if $X=P(B_n)$ then $S^X=\emptyset$.
\end{ex}

Say that a node $i\in I^X$ has an even Cartan row if $\pair{\alpha_i^\vee}{\alpha_j}\in 2\Z$ for all $j\in I^X$. 

\begin{lem} \label{L:nonred} 
\begin{enumerate}
\item Among affine root systems of untwisted or dual untwisted type, the list of nodes with even Cartan rows is:
$A_1^{(1)}$, nodes $0,1$; $B_n^{(1)}$ for $n\ge 2$, node $n$; $C_2^{(1)}$, node $1$; $D_{n+1}^{(2)}$, nodes $0,n$.
\item $S^X$ is the set of all nodes in $I^X$ with even Cartan rows if $X=Q^X$ and is empty otherwise.
\end{enumerate}
\end{lem}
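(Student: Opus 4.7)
The plan is to dispatch both parts by case analysis, leveraging the classification of untwisted and dual untwisted affine Dynkin types.

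For part (1), off-diagonal Cartan entries lie in $\{0,-1,-2,-3\}$, so row $i$ is even iff node $i$ is adjacent to no other node by a simple or triple bond, and every double bond incident to $i$ has $\alpha_i$ on the short side (yielding entry $-2$ on node $i$'s side). I would run through the types. The simply-laced untwisted types $A_n^{(1)}$ ($n\ge 2$), $D_n^{(1)}$, $E_{6,7,8}^{(1)}$ have entries only in $\{2,0,-1\}$, so no even rows; the degenerate $A_1^{(1)}$ has rows $(2,-2)$ and $(-2,2)$, both even. For $B_n^{(1)}$ ($n\ge 2$), only the short tail node $n$ is even; for $C_n^{(1)}$ only when $n=2$, namely the central short node $1$ flanked by two double bonds; $F_4^{(1)}$ and $G_2^{(1)}$ have no even rows. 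The dual untwisted affine types $A_{2n-1}^{(2)}$, $D_{n+1}^{(2)}$, $E_6^{(2)}$, $D_4^{(3)}$ have Cartan matrices that are the transposes of those of $B_n^{(1)}$, $C_n^{(1)}$, $F_4^{(1)}$, $G_2^{(1)}$ respectively; rereading rows (that is, reading columns of the partners) picks out nodes $0$ and $n$ of $D_{n+1}^{(2)}$ as the only new even rows.

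For part (2), from the definitions $i\in S^X$ iff $\pair{\alpha_i^{\vee X}}{X}\subset 2\Z$. Using $\pair{\alpha_i^\vee}{\delta^X}=0$, the subset $\pair{\alpha_i^\vee}{Q^X}$ equals the $\Z$-span of $\{a_{ij}: j\in I^X\}$, which is contained in $2\Z$ iff row $i$ is even; hence when $X=Q^X$, part (1) identifies $S^X$ as claimed. When $Q^X\subsetneq X$, nodes with odd Cartan rows are already excluded by their pairing with $Q^X$, so only the even-row nodes from part (1) could remain in $S^X$. For each such candidate I would exhibit $\omega\in X\setminus Q^X$ with $\pair{\alpha_i^\vee}{\omega}$ odd. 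In each relevant case $[P:Q]=2$ and a minuscule or spin weight does the job: for $A_1$, $\omega_1=\tfrac12\alpha_1$ gives pairing $1$; for node $n$ of $B_n^{(1)}$ and for nodes $0,n$ of $D_{n+1}^{(2)}$ (whose finite part is $B_n$) take $\omega_n=\tfrac12(e_1+\cdots+e_n)$, and use $\alpha_n^\vee=2e_n^\ds$ together with $\alpha_0^\vee=-2e_1^\ds$ (computed from the Cartan relations as in Example~\ref{X:D2root}); for node $1$ of $C_2^{(1)}$ take $\omega_1=e_1$, paired to $1$ against $\alpha_1^\vee=e_1-e_2$. This yields $S^X=\emptyset$ whenever $X\neq Q^X$.

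The main obstacle is notational bookkeeping: keeping straight the length and arrow conventions in each finite subsystem, correctly transposing the Cartan matrix under dual-untwisted affinization, and pinning down $\alpha_0^\vee$ in the dual untwisted case (where it is governed by $\vartheta^X$ rather than by the highest root). Once these conventions are fixed, each verification reduces to a single integer inner product.
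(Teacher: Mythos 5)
Your proposal is correct and follows essentially the same route as the paper: part (1) is a case check over the classification of untwisted and dual untwisted affine types, and part (2) combines the observation that the simple roots form a basis of $Q^X$ (so $S^X$ equals the even-row nodes when $X=Q^X$) with the fact that $[P^X:Q^X]=2$ in every relevant type, so that $X\neq Q^X$ forces $X=P^X$ and one need only exhibit a weight in $P^X$ pairing oddly with $\alpha_i^\vee$. The only cosmetic difference is that for the finite nodes the paper uses the tautological pairing $\pair{\alpha_i^\vee}{\omega_i}=1$ rather than your explicit minuscule weights, reserving an explicit computation for the node $0$ cases; both verifications are equivalent.
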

\begin{proof} For (2), suppose $X=Q^X$. It is immediate from the definitions that if $i\in S^X$ then $i$ has even Cartan row. Conversely, if $i$ has even Cartan row then 
since the simple roots $\{\alpha_j\mid j\in I_0\}$ are a basis of $Q^X=X$, it follows that $i\in S^X$. Now suppose $X\ne Q^X$. We must show that $S^X=\emptyset$. Suppose not, say $i\in S^X$. We have $i\notin I_0$ for otherwise $\omega_i\in P^X=X$ and $\pair{\alpha_i^\vee}{\omega_i}=1\not\in 2\Z$. So $i=0$ is the only possibility. Using (1), $\tX$ either has type $A_1^{(1)}$ or $D_{n+1}^{(2)}$, and in either case $\pair{\alpha_0^\vee}{\omega_1}=-1\not\in 2\Z$ so that $0\notin S^X$, as required.
\end{proof}

\begin{rem}\label{R:nonreddegen} 
$B_2^{(1)}$ is the relabeling of $C_2^{(1)}$ in which the central node is $2$. 
\end{rem}

The not-necessarily-reduced set $R(\tX)$ of roots
is defined by
\begin{align}
\label{R:nonredroots}
R(\tX) = R_\rd(\tX) \cup \bigcup_{i\in S^X} 
W_a(\tX) (2\alpha_i).
\end{align}
$R(\tX)$ is said to be reduced if $R(\tX)=R_\rd(\tX)$ or equivalently if $S^X=\emptyset$.

Similar definitions are made for $R(\tY)$.
Suppose that $(X,Y)$ and $(X',Y')$ are double affine data with $X$ and $X'$ of the same type and $Y$ and $Y'$ of the same type. Say that $(X,Y)$ and $(X',Y')$ are orbit-equivalent if $R(\tX)=R(\tX')$, $R(\tY)=R(\tY')$, the $W(\tX)$-orbits on $R(\tX)$ are the same as the $W(\tX')$-orbits on $R(\tX')$, and similarly for the $Y$s.

\begin{lem}\label{L:rootorbits} 
\begin{enumerate}
\item 
Suppose $(X,Y)$ is a double affine datum such that $\tX$ has no node with even Cartan row. Then $(X,Y)$ and $(P^X,Y)$ are orbit equivalent.
\item Suppose $(X,Y)$ is a double affine datum such that
$\tX$ has a node with even Cartan row. Then $Q^X$ has index $2$ in $P^X$; $(Q^X,Y)$ and $(P^X,Y)$  are not orbit equivalent; and if $X'$ has the same type as $X$ and $X'\ne Q^X$ and $X\ne Q^X$ then $(X,Y)$ and $(X',Y)$ are orbit-equivalent.
\end{enumerate}
Moreover similar statements hold with the roles of $X$ and $Y$ interchanged.
\end{lem}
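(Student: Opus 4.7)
The plan is to derive both parts from Lemma \ref{L:nonred} combined with Lemma \ref{L:reducedorbitafex}. The key observation from Lemma \ref{L:nonred}(2) is that the doubled node set $S^X$ coincides with the set of even-Cartan-row nodes of $\tX$ when $X=Q^X$, and is empty otherwise; together with \eqref{R:nonredroots} this completely determines $R(\tX)$ from $R_\rd(\tX)$ and the dichotomy $X=Q^X$ vs.\ $X \ne Q^X$.

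For (1), under the hypothesis that $\tX$ has no even-Cartan-row node, $S^X=\emptyset$ for every admissible $X$, so $R(\tX)=R_\rd(\tX)=R(\tX')$ as subsets of $Q^X\oplus\Z\delta^X$. The action of $W(\tX)=W_0\ltimes Y$ defined by \eqref{E:tmuxx} depends only on $Y$ and the pairing $(\cdot,\cdot)$, giving the same orbits on $R(\tX)$ for both pairs. The subtle step is the $Y$-side, where $W(\tY)=W_0\ltimes X$ sits between $W_a(\tY)=W_0\ltimes Q^X$ and $W_e(\tY)=W_0\ltimes P^X$; by Lemma \ref{L:reducedorbitafex} these three groups share orbits on $R_\rd(\tY)$ unless $\tY\in\{A_1^{(1)},C_n^{(1)},D_{n+1}^{(2)}\}$. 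I plan to rule out this exceptional list by a finite case check using the $\tX$-$\tY$ correspondence (untwisted: dual classical finite types; dual untwisted: $\tX=\tY$), observing that $\tY=A_1^{(1)}$ or $\tY=D_{n+1}^{(2)}$ would place $\tX$ in the same dual untwisted type, which has even-Cartan-row nodes, and $\tY=C_n^{(1)}$ with $n\ge 2$ would force $\tX=B_n^{(1)}$, excluded by hypothesis.

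For (2), Lemma \ref{L:nonred}(1) restricts $\tX$ to $A_1^{(1)},B_n^{(1)}\,(n\ge 2), C_2^{(1)}, D_{n+1}^{(2)}$, whose classical subsystems are of types $A_1,B_n,C_2,B_n$ respectively, and in each case the standard computation of the fundamental group gives $[P^X:Q^X]=2$. Comparing $(Q^X,Y)$ and $(P^X,Y)$, Lemma \ref{L:nonred}(2) yields $S^{Q^X}$ nonempty while $S^{P^X}=\emptyset$, so \eqref{R:nonredroots} produces $R(\widetilde{Q^X})\supsetneq R_\rd(\tX)=R(\widetilde{P^X})$, already obstructing orbit-equivalence. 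The third claim is vacuous since $[P^X:Q^X]=2$ leaves only $P^X$ as an admissible lattice distinct from $Q^X$. The symmetric statements with $X$ and $Y$ interchanged then follow by applying the same arguments to $\tY$.

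The main obstacle is the case check at the end of the part (1) argument, which requires matching the hypothesis on $\tX$ against the exceptional list for $\tY$, treating the untwisted case (where $\tX$ and $\tY$ have dual classical types, such as $C_n^{(1)}/B_n^{(1)}$) separately from the dual untwisted case (where $\tX=\tY$), and using the $B_2^{(1)}=C_2^{(1)}$ relabeling identification from Remark \ref{R:nonreddegen}.
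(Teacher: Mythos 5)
Your proof is correct and follows essentially the same route as the paper, whose proof consists of the single sentence that the claim ``follows from the definitions and Lemmas \ref{L:reducedorbitafex} and \ref{L:nonred}''; you have simply supplied the details (the determination of $R(\tX)$ via $S^X$, the sandwiching of $W(\tY)$ between $W_a(\tY)$ and $W_e(\tY)$, and the case check ruling out the exceptional list) that the authors left implicit.
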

\begin{proof} This follows from the definitions and Lemmas \ref{L:reducedorbitafex} and \ref{L:nonred}.
\end{proof}

\begin{rem}\label{R:PorQ} In light of Lemma \ref{L:rootorbits}, up to orbit equivalence we may always take $X=P^X$ or $X=Q^X$ (resp. $Y=P^Y$ or $Y=Q^Y$) with the latter only a possibility if $\tX$ (resp. $\tY$) has a node with even Cartan row. 
\end{rem}

\subsection{Hecke parameters}
Let $(X,Y)$ be a double affine datum. Let $\K$ be a field that contains invertible elements $v_\alpha$ for every $\alpha\in R(\tX)$, which depend only on the $W(\tX)$-orbit: $v_\alpha=v_\beta$ if $W(\tX)(\alpha)=W(\tX)(\beta)$. In particular we have elements $v_{\alpha_i}$ for $i\in I^X$ and elements $v_{2\alpha_i}$ for $i\in S^X$. For convenience for $i\in I^X\setminus S^X$ we write $v_{2\alpha_i}=v_{\alpha_i}$. In the literature the symbol $t_i$ is often used for  $v_{\alpha_i}^2$.

\begin{rem}\label{R:mixedtype}
If $2\alpha_i\in R(\tX)$ for some $i\in I$, then specializing $v_{2\alpha_i}=v_{\alpha_i}$
has the effect of deleting the affine Weyl orbit of the root $2\alpha_i$ from the set of
real roots. Specializing $v_{2\alpha_i}=1$ removes the orbit of the root $\alpha_i$.
For instance, consider the Koornwinder double affine datum $(X,Y)=(Q(B_n),Q(B_n))$,
which has $S^X=\{0,n\}=S^Y$.
\begin{enumerate}
\item
To get nonreduced type $A_{2n}^{(2)}$ (with added root $2\alpha_0$), set $v_{2\alpha_n}=1$.
To get reduced $A_{2n}^{(2)}$, further set $v_{2\alpha_0}=v_{\alpha_0}$.
\item To get nonreduced type $A_{2n}^{(2)\dagger}$ (with added root $2\alpha_n$), set $v_{2\alpha_0}=1$. To get reduced $A_{2n}^{(2)\dagger}$, further set $v_{2\alpha_n}=v_{\alpha_n}$.
\item To get reduced type $D_{n+1}^{(2)}$, set 
$v_{2\alpha_0}=v_{\alpha_0}$ and $v_{2\alpha_n}=v_{\alpha_n}$.
\item To get type $C_n^{(1)}$, set 
$v_{2\alpha_0}=v_{2\alpha_n}=1$.
\end{enumerate}
\end{rem}

\subsection{Presentation of DAHA}
\label{SS:dahax}
Let $(X,Y)$ be a double affine datum. The DAHA $\HH(X,Y;\{v_{\alpha_i},v_{2\alpha_i}\})$ 
is the $\K$-algebra with generators $X^\la$ for $\la\in X$, $\pi\in\Daut^X$, $T_i=T_i^X$
for $i\in I^X$, and $q^{\pm 1/m}$ and the following relations. One has
$\pi T_i \pi^{-1} = T_{\pi(i)}$ for $i\in I^X$ and $\pi\in\Daut^X$. Moreover we regard
$\K X[q^{\pm(1/m)}]$ as the group algebra over $\K$ for the lattice
$\tX= X \oplus \Z (1/m)\delta^X$ where $q=X^{\delta^X}$. We have $\pi \mu \pi^{-1} = \pi(\mu)$,
for $\pi\in\Daut^X$ and $\mu\in \tX$, and correspondingly we have the relations
$\pi X^\mu \pi^{-1}=X^{\pi(\mu)}$ in $\HH(X,Y)$. The element $q^{\pm 1/m}$ is central.
The $T_i=T_i^X$ are subject to the same braid relations as the generators $s_i=s_i^X \in W_a(\tX)$
(see Remark~\ref{R:labeling} above for an explanation of the superscript notation). For $w\in W(\tilde{X})$, let $w=\pi^X s_{i_1}\dotsm s_{i_\ell}$ be a reduced expression (one in which $\ell$ is minimal). Define
\begin{align}\label{E:Tw}
  T_w^X = \pi^X T_{i_1}^X \dotsm T_{i_\ell}^X.
\end{align}
Since the $T_i$ satisfy the braid relations this is independent of the reduced expression.

Finally, there are the quadratic relations
\begin{align}\label{E:quadred}
  (T_i - v_{\alpha_i})(T_i + v_{\alpha_i}^{-1}) &= 0
  \qquad\text{for all $i\in I^X$} \\
\label{E:quadnonred}
  (T_i^{-1}X^{-\alpha_i} - v_{2\alpha_i})
  (T_i^{-1}X^{-\alpha_i} + v_{2\alpha_i}^{-1}) &= 0
  \qquad\text{for all $i\in S^X$} 
\end{align}
and the commutation relation
\begin{align}\label{E:commnonred}
  T_i X^\la - X^{s_i(\la)} T_i &=
  \dfrac{(v_{\alpha_i}-v_{\alpha_i}^{-1}) + (v_{2\alpha_i}-v_{2\alpha_i}^{-1})X^{\alpha_i}}{1-X^{2\alpha_i}}(X^\la - X^{s_i(\la)})
\end{align}
for all $\la\in X$ and $i\in I^X$. When $i\in I^X\setminus S^X$, \eqref{E:commnonred} reduces to
\begin{align}\label{E:commred}
  T_i X^\la - X^{s_i(\la)} T_i &=
  \dfrac{(v_{\alpha_i}-v_{\alpha_i}^{-1})  }{1-X^{\alpha_i}}(X^\la - X^{s_i(\la)})
\end{align}
We also note that, when $i\in S^X$, \eqref{E:quadnonred} is equivalent to the special case of
\eqref{E:commnonred} when $\la=\alpha_i$.

\subsection{Dual Hecke parameters}
\label{SS:dualparameters}
The DAHA duality theorem, Theorem \ref{T:dualitydifferent} below, requires another indexing
of the Hecke parameters. Let $j\in I_0$ be the index of any short simple root $\alpha_j\in X$.
Let
\begin{align}\label{E:dualparameters}
  v_{\alpha_i^Y} &= v_{\alpha_i^X} &&\text{if $i\in I_0$} \\
  v_{\alpha_0^Y} &= v_{2\alpha_j^X} \\
  v_{2\alpha_i^Y} &= v_{\alpha_0^X} &&\text{if $i\in S^Y\setminus\{0^Y\}$} \\
\label{E:enddualparameters}
  v_{2\alpha_0^Y} &= v_{2\alpha_0^X} &&\text{if $0^Y\in S^Y$.}
\end{align}

\begin{rem} \cite{St}
The number of $W(\tY)$-orbits on $R(\tY)$
equals the number of $W(\tX)$-orbits on $R(\tX)$.
\end{rem}

\subsection{DAHA duality}
\label{SS:DAHAdual}

Define elements $Y^\la$ in $\HH(X,Y)$ for $\la\in Y$ as follows.
If $\la$ is dominant, then $Y^\la=T_{t_\la}^X$, 
where $t_\la \in W_0 \ltimes Y \cong W(\tilde{X})$; see \eqref{E:Tw}. 

For arbitrary $\la\in Y$, first write $\la = \mu - \nu$, where $\mu$ and $\nu$ are
dominant, and then let $Y^\la = Y^{\mu}(Y^{\nu})^{-1}$. This definition is independent of
the decomposition of $\la$, and the $Y^\la$ generate a subalgebra of
$\HH(X,Y)$ isomorphic to the group algebra $\K Y$.

Let $T_i^Y\in \HH(X,Y)$ be defined by $T_i^Y=T_i^X$ for $i\in I_0$ and 
$$ T_0^Y=(X^\vartheta T_{s_\vartheta})^{-1} $$
where $\vartheta = \vartheta^X$ is the short dominant root in $X$.
Let 
$$ \pi_i^Y = X^{\omega_i^X}T_{u_i^{-1}}$$
for $i\in I^{Y\spc}$ where $u_i=u_{\omega_i^X}$; note that $\pi_0^Y=1$ due to our convention $\omega_0^X=0$. The duality theorem below implies that the elements $\pi_i^Y\in \HH(X,Y)$ for $i\in I^{Y\spc}$ form a group naturally isomorphic to $\Daut^Y=X/Q^X$. Finally, set $Y^{-\delta^Y} = q$.

\begin{thm}\label{T:dualitydifferent} \cite[Cor. 5.12]{Hai}
Let $\epsilon$ be an automorphism of $\K$
such that $\epsilon(v_{\alpha_i^X})=v_{\alpha_i^X}^{-1}$
and $\epsilon(v_{2\alpha_i^X})=v_{2\alpha_i^X}^{-1}$
for all $i\in I^X$. Then there is an $\epsilon$-linear
isomorphism 
$$\delta : \HH(X,Y;\{v_{\alpha_i^X},v_{2\alpha_i^X}\})\to 
\HH(Y,X;\{v_{\alpha_i^Y},v_{2\alpha_i^Y}\})$$
that is the identity on 
$X$, $Y$, $\Daut^X$, and $\Daut^Y$, maps $q$ to $q^{-1}$,
$T_i$ to $T_i^{-1}$ for $i\in I_0$, $T_0^X\mapsto (T_0^X)^{-1}$, $T_0^Y\mapsto (T_0^Y)^{-1}$,
with parameters $v_{\alpha_i^Y}$ and $v_{2\alpha_i^Y}$
defined as in \S \ref{SS:dualparameters}.
\end{thm}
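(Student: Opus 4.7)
The plan is to verify that the assignments on generators given in the statement extend to a well-defined $\epsilon$-linear algebra homomorphism, and then construct its inverse in a symmetric fashion. Because the generators $X^\lambda,Y^\mu,T_i^X,T_i^Y,\pi_i^X,\pi_i^Y$ together generate $\HH(X,Y)$, it suffices to check that every defining relation of $\HH(X,Y;\{v_{\alpha_i^X},v_{2\alpha_i^X}\})$ is mapped to a valid identity in $\HH(Y,X;\{v_{\alpha_i^Y},v_{2\alpha_i^Y}\})$ under the prescribed parameter relabeling of \S\ref{SS:dualparameters}. The role of $\epsilon$ is precisely to absorb the sign flip $T_i\mapsto T_i^{-1}$ in the quadratic relation.

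First I would handle the ``classical'' generators. For $i\in I_0$ the image of $T_i^X$ is $(T_i^Y)^{-1}=(T_i^X)^{-1}$, and using $\epsilon(v_{\alpha_i^X})=v_{\alpha_i^X}^{-1}$ one checks immediately that \eqref{E:quadred} is preserved; the finite braid relations among the $T_i$ for $i\in I_0$ are preserved since they are the same on both sides. The commutation relations \eqref{E:commnonred} between $T_i^X$ ($i\in I_0$) and $X^\lambda$ carry over to the commutation relations between $(T_i^Y)^{-1}$ and $X^\lambda$, which by the quadratic relation is equivalent to the corresponding $T_i^Y$-commutation relation in $\HH(Y,X)$. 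One then verifies the analogous relations for the images of $Y^\mu$, which by definition lie in the $T_w^X$-subalgebra and are sent to the correct elements by construction of the dual generators. The identification of $\pi_i^Y=X^{\omega_i^X}T_{u_i^{-1}}\in \HH(X,Y)$ with the group $\Daut^Y\cong X/Q^X$ on the dual side is the content of \eqref{E:autotransfinite} transferred across the duality, combined with the commutation relations already established.

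The affine node is the crux. I would verify that $T_0^Y:=(X^\vartheta T_{s_\vartheta})^{-1}$ satisfies the correct quadratic relation with parameter $v_{\alpha_0^Y}=v_{2\alpha_j^X}$ where $\alpha_j^X$ is any short simple root; this requires the commutation formula \eqref{E:commnonred} applied in the orbit of $\alpha_j^X$, together with the identity $s_0^Y=t_\vartheta s_\vartheta$ from \eqref{E:s0Y}. The finite braid relations between $T_0^Y$ and $T_i^Y$ for $i\in I_0$ follow from writing $s_0^Y s_i s_0^Y\cdots=s_is_0^Y s_i\cdots$ inside $W(\tY)$ and translating via the $W_0$-action on $t_\vartheta$. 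The action of $\pi_i^Y$ on $T_0^Y$ must match the affine Dynkin automorphism, which is essentially a restatement of \eqref{E:autotransfinite} with $X$ and $Y$ swapped. Finally, for each $i\in S^Y$ I would verify the nonreduced quadratic relation \eqref{E:quadnonred} using the parameter assignments \eqref{E:dualparameters}--\eqref{E:enddualparameters}; the fact that orbit counts agree on the two sides (the remark preceding the theorem) guarantees that the relabeling is consistent.

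Once all defining relations are preserved, $\delta$ is a well-defined $\epsilon$-linear algebra homomorphism. The inverse is constructed by applying the same recipe with $X$ and $Y$ interchanged, using the involutive nature of the parameter dictionary \eqref{E:dualparameters}--\eqref{E:enddualparameters}, and the observation that the definitions of $T_0^Y$ and $\pi_i^Y$ in terms of $X$-generators are mirrored by the analogous formulas for $T_0^X$ and $\pi_i^X$ in terms of $Y$-generators. Composing the two maps gives the identity on generators and hence everywhere. The main obstacle is the affine node: tracking the Hecke parameters through the identity $T_0^Y=(X^\vartheta T_{s_\vartheta})^{-1}$ in the nonreduced and mixed cases, where an orbit of short roots in $R(\tX)$ can correspond to an orbit of doubled roots $2\alpha_i$ in $R(\tY)$; this is precisely where the nonstandard assignment $v_{\alpha_0^Y}=v_{2\alpha_j^X}$ is forced upon us, and verifying it requires using the full commutation relation \eqref{E:commnonred} rather than its reduced form \eqref{E:commred}.
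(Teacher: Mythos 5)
The paper does not actually prove this theorem: it is quoted from \cite[Cor.~5.12]{Hai}, and the remark following it explicitly defers to \cite[\S 4.13]{Hai} for the proof in the not-necessarily-reduced setting. So the comparison here is between your outline and the argument in the cited source. Your overall strategy --- check that the prescribed images of the generators satisfy every defining relation of $\HH(X,Y)$ inside $\HH(Y,X)$ under the parameter dictionary \eqref{E:dualparameters}--\eqref{E:enddualparameters}, then invert by symmetry --- is indeed the standard route, going back to Cherednik, and nothing you assert is wrong. The easy verifications you do carry out (the quadratic relation for $T_i$, $i\in I_0$, under $T_i\mapsto T_i^{-1}$ and $v\mapsto v^{-1}$; the finite braid relations) are correct.

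The gap is that every step you defer with ``I would verify'' is precisely where the content of the theorem lives, and none of them is routine. Concretely: (i) showing that $T_0^Y=(X^\vartheta T_{s_\vartheta})^{-1}$ satisfies the quadratic relation with parameter $v_{2\alpha_j^X}$ requires a genuine computation with \eqref{E:commnonred} iterated along a reduced word for $s_\vartheta$, keeping track of which roots in $\Inv(s_\vartheta)$ pair to $\pm 1$ versus $\pm 2$ against $\vartheta$ --- this is where the nonreduced case diverges from the reduced one and where the assignment $v_{\alpha_0^Y}=v_{2\alpha_j^X}$ is actually \emph{proved} rather than observed; (ii) the braid relations between $T_0^Y$ and the $T_i^Y$ do not follow merely from the corresponding identities in $W(\tY)$, since $T_0^Y$ is not a generator of $\HH(X,Y)$ but a composite element --- one needs the Bernstein--Lusztig-type machinery establishing that the $Y^\mu$ commute and that $T_{w}^X$ for $w$ in the translation lattice behaves multiplicatively, which is itself a nontrivial lemma; (iii) the claim that $\delta$ is the identity on $Y$ requires showing that $\epsilon$-twisting and inverting the elements $T^X_{t_\la}$ reproduces the $Y^\la$ of $\HH(Y,X)$, and the group law for the $\pi_i^Y=X^{\omega_i^X}T_{u_i^{-1}}$ is likewise a computation, not a restatement of \eqref{E:autotransfinite}. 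As written, your text is a correct plan for the proof but not a proof; to stand on its own it would need the affine-node computations spelled out, or else it should simply cite \cite[\S 4.13]{Hai} as the paper does.
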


\begin{rem}
In the DAHA $\HH(Y,X)$, the elements $Y^\la$, $\pi_i^Y$, and $T_i^Y$ are generators as in \S~\ref{SS:dahax}, while the elements $X^\la$, $\pi_i^X$, and $T_i^X$ are derived from the generators as in this subsection.
\end{rem}

\begin{rem}
The duality theorem was discovered by Cherednik in the reduced dual untwisted setting \cite[Thm. 2.2]{Che:1992}. We refer to \cite[\S 4.13]{Hai} for a discussion of the history of Theorem~\ref{T:dualitydifferent} and a proof in the not-necessarily-reduced setting.
\end{rem}

\subsection{Intertwiners}
Equation \eqref{E:commnonred} can be rewritten as
\begin{align}\label{E:intertwinernonred}
\phi_i^X X^\la &= X^{s^X_{i,q}(\la)} \phi_i^X
\qquad \text{for all $i\in I^X$ and $\la\in X$}
\end{align}
where
\begin{align}
\label{E:phinonred}
  \phi_i^X &= T_i^X - \dfrac{(v_{\alpha_i}-v_{\alpha_i}^{-1})+(v_{2\alpha_i}-v_{2\alpha_i}^{-1})X^{\alpha_i}}{1-X^{2\alpha_i}} \\
\label{E:phinonredneg}
  &= (T_i^X)^{-1} - \dfrac{(v_{\alpha_i}-v_{\alpha_i}^{-1})X^{2\alpha_i}+(v_{2\alpha_i}-v_{2\alpha_i}^{-1})X^{\alpha_i}}{1-X^{2\alpha_i}}
\end{align}
The notation $s^X_{i,q}$ refers to the action of $s_i\in W_a(\tX)$ on $\tX$. 

The equality of formulas \eqref{E:phinonred} and \eqref{E:phinonredneg} follows from \eqref{E:quadred}.

When $i\in I^X\setminus S^X$, \eqref{E:phinonred} reduces to
\begin{align}
\label{E:phired}
  \phi_i^X &= T_i^X - \dfrac{v_{\alpha_i}-v_{\alpha_i}^{-1}}{1-X^{\alpha_i}} \\
\label{E:phiredneg}
  &= (T_i^X)^{-1} - \dfrac{(v_{\alpha_i}-v_{\alpha_i}^{-1})X^{\alpha_i}}{1-X^{\alpha_i}}.
\end{align}

The $\phi_i^X$ satisfy the same braid relations as the simple reflections $s_i\in W(\tX)$.

\subsection{Dual intertwiners}
Define $\phi_i^Y$ as above, but with $Y$ in place of $X$, i.e., for $\HH(Y,X)$.
Let $\psi_i=\delta^{-1}(\phi_i^Y)$. By Theorem \ref{T:dualitydifferent} we have
\begin{align}\label{E:intertwinerY}
  \psi_i Y^\mu &= Y^{s^Y_{i,q}(\mu)} \psi_i
\end{align}
for $i\in I^Y$ and $\mu\in Y$. Explicitly,
\begin{align}
\label{E:psi}
  \psi_i &= T_i^Y - \dfrac{(v_{\alpha_i^Y}-v_{\alpha_i^Y}^{-1})+(v_{2\alpha_i^Y}-v_{2\alpha_i^Y}^{-1})Y^{-\alpha_i^Y}}{1-Y^{-2\alpha_i^Y}} \\
\label{E:psineg}
  &= (T_i^Y)^{-1} - \dfrac{(v_{\alpha_i^Y}-v_{\alpha_i^Y}^{-1})Y^{-2\alpha_i^Y}+(v_{2\alpha_i^Y}-v_{2\alpha_i^Y}^{-1})Y^{-\alpha_i^Y}}{1-Y^{-2\alpha_i^Y}}.
\end{align}
The notation $s^Y_{i,q}$ refers to the action of $s_i\in W_a(\tY)$ on $\tY$.

The $\psi_i$ satisfy the same braid relations as $s_i\in W_a(\tY)$. For any reduced
expression $w = \pi^Y s_{i_1}\dotsc s_{i_\ell} \in W(\tY)$ where $\pi^Y\in\Daut^Y$ and 
$i_1,\ldots,i_l\in I^Y$, define
\begin{align}\label{E:psiw}
\psi_w = \pi^Y\psi_{i_1}\dotsm \psi_{i_\ell}.
\end{align}
This definition is independent of the reduced expression for $w$.

\subsection{Trivial module for the affine Hecke algebra}
Let $\H(\tX)$ be the $\K$-subalgebra of $\HH(X,Y)$ generated by $T_i^X$ for $i\in I^X$ and
$\pi^X\in \Pi^X$. $\H(\tX)$ is the Hecke algebra for the affine root datum $\tX$. Let $\K \bo$ be the trivial
one-dimensional $\H(\tX)$-module, defined by
\begin{align}
\label{E:Tigen} 
  T_i^X(\bo) &= v_{\alpha_i} \bo \\
\label{E:piXgen}
  \pi^X(\bo) &= \bo.
\end{align}
For $w\in W(\tilde{X})$ let $\Inv(w) = R_+(\tX)\cap - w^{-1} R_+(\tX)$ be the set of right inversions of $w$ (the notation $R_+(\tX)$ disallows doubled roots, that is, 
$R_+(\tX)\subset R_\rd(X)$ by definition). We have
\begin{align}\label{E:Inv}
\Inv(w)=\{s_{i_\ell}\dotsm s_{i_{j+1}} \alpha_{i_j}\mid 1\le j\le \ell\}\qquad\text{for $w=\pi^X s_{i_1}\dotsm s_{i_\ell}$ reduced.}
\end{align}
We have
\begin{align}\label{E:TXeigen}
  T_w^X (\bo) = v_w \bo
\end{align}
where $v_w\in \K$ is defined by
\begin{align}\label{E:vw}
  v_w = \prod_{\beta\in \Inv(w)} v_\beta.
\end{align}

\subsection{Polynomial module}
The {\em polynomial module} of $\HH(X,Y)$ is defined by
\begin{align}\label{E:polymod}
\Pol=\Ind_{\H(\tX)}^{\HH(X,Y)} \K \bo.
\end{align}
The PBW Theorem for the DAHA (see \cite[Thm. 3.2.1]{Che:2005} or \cite[Cor. 5.8]{Hai}) implies that $\Pol=\K X[q^{\pm 1/m}]\bo=\K \tX \bo$.

\subsection{Nonsymmetric Macdonald-Koornwinder polynomials}
For $\la\in X$, let $m_\la \in X \rtimes W_0=W(\tY)$ be the element of minimum length in the coset $t_\la W_0$.
Explicitly 
\begin{align}\label{E:mla}
  m_\la = t_\la u_\la^{-1}
\end{align}
where $u_\la\in W_0$ is the shortest element such that $u_\la(\la)\in -X_+$. The nonsymmetric Macdonald-Koornwinder polynomial $E_\la\in \Pol$ is defined by:
\begin{align}
\label{E:tEdef}
  \tE_\la &= \psi_{m_\la} \bo \\
\label{E:Edef}
  E_\la &= v^{-1}_{u_\la^{-1}} \tE_\la.
\end{align}
We normalize $E_\la$ so that the monomial $X^\la$ has coefficient $1$. This definition takes place in the polynomial module with scalars extended to $\K(q^{1/m})$. However, it is easy to see that the coefficients of $E_\la$ are rational functions of $q$ and the $v_\al^2$.

The use of intertwiners to construct the $E_\la$ originated in \cite{Kn} for type $A$ and \cite{Che:1997} for the general dual untwisted case.

\subsection{Symmetric Macdonald-Koornwinder polynomials}
For $\la\in X_+$, let $(W_0)_\la$ be the stabilizer of $\la$ in $W_0$ and $W_0^\la$ the set of minimum length coset representatives for $W_0/(W_0)_\la$.

The symmetric Macdonald-Koornwinder polynomial 
$P_\la\in \Pol$ is defined by
\begin{align}\label{E:Pdef}
  P_\la = \sum_{u\in W_0^\la} v_u\, T_u(E_\la).
\end{align}
The element $P_\la$ is $W_0$-invariant and the coefficient of the monomial symmetric function $\sum_{u\in W_0^\la} X^{u(\la)}$ in $P_\la$ is equal to $1$.

\begin{rem} \label{R:oversymmetrize} 
\cite[(5.7.5)]{Mac:2003} For $\la\in X_+$ one has
\begin{align}\label{E:Poversym}
  P_\la = 
  \frac{\sum_{u\in W_0} v_u \,T_u(E_\la)}{\sum_{u\in(W_0)_\la} v_u^2}.
\end{align}
\end{rem}

\subsection{Eigenvalues} The nonsymmetric Macdonald-Koornwinder polynomials $E_\la$ are simultaneous eigenvectors for the family of operators $Y^\mu$. In this section we complete the formula in \cite[Prop. 6.9]{Hai} for these eigenvalues.

By Lemma \ref{L:reducedorbitafex}
there are at most three $W(\tX)$-orbits on $R_\rd(\tX)$. The nonempty sets among the following form the $W(\tX)$-orbit decomposition of $R_\rd(\tX)$.
\begin{align}
  R^s(\tX) &= W(\tX)(\alpha_i) \qquad\text{for $i\in I_0$ such that $\alpha_i$ is short} \\
  R^l(\tX) &= W(\tX)(\alpha_j) \setminus R^s(\tX) \qquad\text{for $j\in I_0$ with $\alpha_j$ long} \\
  R^0(\tX) &= W(\tX)(\alpha_0) \setminus (R^s(\tX)\cup R^l(\tX))
\end{align}
These orbits have corresponding Hecke parameters
$v_s = v_\alpha$ for $\alpha$ in the set of short roots $R^s(X)$, $v_l = v_\beta$ for $\beta$ in the set of long roots $R^l(X)$, and $v_0 = v_{\alpha_0}$. 

For $\al\in R(\tX)$ define the symbols $k_\al$ by the logarithmic notation $v_{\alpha} = q^{k_\al}$. Let
\begin{align}
\rho^{\vee Y}_k = (1/2)\sum_{\alpha\in R_+(X)} k_\alpha\, \alpha^{Y\vee}
\end{align}
where, for $\alpha \in R_+(X)$, $\alpha^{Y}\in R_+(Y)$ is defined by $s_{\alpha^Y} = s_\alpha$ in $W_0$.
In the untwisted case, if $\alpha\in R_+(X)$ is short
then $\alpha^\vee$ is long; therefore $\alpha^Y=\eta^{-1}(\alpha^\vee)$ is long
and $\alpha^{Y\vee}$ is short. Similarly, in the untwisted case, if $\alpha$ is long
then $\alpha^{Y\vee}$ is long.
In the dual untwisted case, $Q^Y=Q^X$ and $\alpha^{Y\vee}=\alpha^\vee$.
Let $k_s=k_\alpha$ for short $\alpha$ and $k_l=k_\alpha$ for long $\alpha$. Then we have
\begin{align}
  \rho^{\vee Y}_k &= 
  \begin{cases} k_s \rho^{\vee Y}_s +  k_l \rho^{\vee Y}_l 
  &\text{if untwisted} \\
  k_s \rho^{\vee Y}_l + k_l \rho^{\vee Y}_s & \text{if dual untwisted}
  \end{cases}
\end{align}
where $2\rho^{\vee Y}_s$ (resp. $2\rho^{\vee Y}_l$) is the sum of
the short (resp. long) positive {\em coroots} of $Y$. Note that $v_s=q^{k_s}$ and $v_l=q^{k_l}$.

\begin{prop}\label{P:eigen}
Let $\la\in X$ and $\mu = a\,\delta^Y + \beta\in \tY$ with $\beta\in Y$. Then $E_\la$ is an eigenvector for $Y^\mu$
with eigenvalue $\chi_{\mu,\la} \in \K[q^{\pm1}]$ given by
\begin{align}
  \chi_{\mu,\la} &= \begin{cases}
  \chi_{\mu,\la}^e & \text{if $R^0_\rd(\tX)=\emptyset$} \\
  \chi_{\mu,\la}^s & \text{if $R^0_\rd(\tX)\ne\emptyset$ and $\alpha_n \in R^s(X)$} \\
  \chi_{\mu,\la}^l & \text{if $R^0_\rd(\tX)\ne\emptyset$ and $\alpha_n \in R^l(X)$} \\
  \end{cases}
\end{align}
where
\begin{align}\label{E:chiextended}
  \chi_{\mu,\la}^e = 
  q^{-a-(\mu,\la)+\pair{u_\la^{-1}(2\rho^{\vee Y}_k)}{\beta}}
\end{align}
and $\chi_{\mu,\la}^s$ (resp. $\chi_{\mu,\la}^l$) is obtained from $\chi_{\mu,\la}^e$ by sending $v_s\mapsto (v_sv_0)^{1/2}$ (resp. $v_l\mapsto (v_lv_0)^{1/2}$ and preserving the other parameters.

In particular if all Hecke parameters $v_{\alpha_i}$ and $v_{2\alpha_i}$ are set to a single invertible parameter $v\in \K$ then 
\begin{align}\label{E:eigenequalparam}
  \chi_{\mu,\la} = q^{-a-(\mu,\la)} v^{\pair{u_\la^{-1}(2\rho^{\vee Y})}{\beta}}
\end{align}
where $2\rho^{\vee Y} = \sum_{\alpha^\vee\in R_+^\vee(Y)} \alpha^\vee$.
\end{prop}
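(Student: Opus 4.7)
The plan is to compute $Y^\mu \tE_\la$ by pushing $Y^\mu$ past $\psi_{m_\la}$ via the intertwiner relation \eqref{E:intertwinerY}, then evaluating on $\bo$. Iterating \eqref{E:intertwinerY} yields $Y^\mu \psi_w = \psi_w Y^{w^{-1}(\mu)}$ for every $w \in W(\tY)$, where $w^{-1}$ denotes the affine action on $\tY$. Taking $w = m_\la = t_\la u_\la^{-1}$ and $\mu = a\delta^Y + \beta$, the analogue of \eqref{E:tmuxx} for the action of $t_\la \in W(\tY)$ on $\tY$ gives
\begin{align*}
m_\la^{-1}(\mu) = u_\la t_{-\la}(\mu) = u_\la(\beta) + (a + (\mu,\la))\delta^Y.
\end{align*}
Since $Y^{-\delta^Y} = q$ is central, this reduces the problem to computing $Y^{u_\la(\beta)}\bo$.

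The central claim is that $Y^\gamma\bo = q^{\pair{2\rho^{\vee Y}_k}{\gamma}}\bo$ for every $\gamma \in Y$, at least in the generic case $R^0_\rd(\tX) = \emptyset$. By $\K$-linearity it suffices to treat $\gamma$ dominant, where $Y^\gamma = T_{t_\gamma}^X$ acts as the scalar $v_{t_\gamma}$ via \eqref{E:TXeigen}. A direct inversion count gives
\begin{align*}
\Inv(t_\gamma) = \{\alpha + k\delta^X \in R_+(\tX) : \alpha \in R_+(X),\ 0 \leq k \leq (\gamma,\alpha) - 1\};
\end{align*}
in dual untwisted type the existence of $\alpha + k\delta^X$ as a real root restricts $k$ further for $\alpha$ long, which combined with Remark \ref{R:DUpair} gives the correct count. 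Under the genericity hypothesis each $v_{\alpha+k\delta^X}$ equals $v_\alpha$, and the resulting exponent collects to $\pair{2\rho^{\vee Y}_k}{\gamma}$ via the case-specific definitions of $\rho^{\vee Y}_k$ in untwisted and dual untwisted types. Substituting $\gamma = u_\la(\beta)$ and using $W_0$-invariance of the pairing to move $u_\la^{-1}$ across produces $\chi_{\mu,\la}^e$. The equal-parameter formula \eqref{E:eigenequalparam} follows by collapsing all $k_\alpha$ to a common value.

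For the exceptional types $\tX \in \{A_1^{(1)}, C_n^{(1)}, D_{n+1}^{(2)}\}$ with $R^0_\rd(\tX) \neq \emptyset$, Lemma \ref{L:reducedorbitafex} splits the affine roots $\alpha + k\delta^X$ with $\alpha \in W_0\alpha_n$ into two $W(\tX)$-orbits by the parity of $k$: the orbit of $\alpha_n$ (parameter $v_n$) for $k$ even and the orbit of $\alpha_0$ (parameter $v_0$) for $k$ odd. In these cases $Y$ is strictly smaller than $P^Y$, whence $(\gamma,\alpha) \in 2\Z$ for all $\gamma \in Y$ and $\alpha \in W_0\alpha_n$, and the product over $0 \leq k \leq (\gamma,\alpha)-1$ equals $(v_n v_0)^{(\gamma,\alpha)/2}$. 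This is precisely the effect of the substitution $v_n \to (v_n v_0)^{1/2}$ prescribed for $\chi_{\mu,\la}^s$ or $\chi_{\mu,\la}^l$ according as $\alpha_n$ is short or long. The main obstacle is the parity assertion $(\gamma,\alpha) \in 2\Z$ in these three types, a type-by-type normalization check using the pairings and lattices of \S \ref{SS:pairing}.
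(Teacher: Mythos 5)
Your proposal is correct and follows essentially the same route as the paper: the heart in both cases is the explicit description of $\Inv(t_\gamma)$ for dominant $\gamma$ together with the orbit bookkeeping of Lemma \ref{L:reducedorbitafex} (this is precisely the content of Lemmas \ref{L:tinv} and \ref{L:tinv0}), and your intertwiner computation $Y^\mu \psi_{m_\la} = \psi_{m_\la} Y^{m_\la^{-1}(\mu)}$ is exactly the reduction from general $\la$ to $\la=0$ that the paper delegates to the citation of Haiman's Prop.\ 6.9. One small wording fix: the passage from dominant $\gamma$ to arbitrary $\gamma\in Y$ is not by $\K$-linearity but by multiplicativity --- since $Y^\gamma = Y^{\mu}(Y^{\nu})^{-1}$ for $\gamma=\mu-\nu$ with $\mu,\nu$ dominant, the map sending $\gamma$ to the eigenvalue of $Y^\gamma$ on $\bo$ is a group homomorphism, which is the same reduction the paper invokes for $\mu$.
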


\begin{rem}\label{R:subs} If $\chi_{\mu,\la}=\chi_{\mu,\la}^s$ 
(resp. $\chi_{\mu,\la}=\chi_{\mu,\la}^l$)
then the power of $v_s$ (resp. $v_l$) in $\chi_{\mu,\la}^e$ is even so that $\chi_{\mu,\la}$ is a monomial in $q, v_s, v_l, v_0$.
\end{rem}

\begin{rem}\label{R:Haimaneigen} In \cite[Prop. 6.9]{Hai} the formula is stated and proved when $R^0_\rd(\tX)=\emptyset$. 
\end{rem}

\begin{ex} \label{X:orbitdecomp}
Let $(X,Y)=(Q(B_2),Q(B_2))$, $\mu=\beta=\omega_1\in Y$, and $\la=0\in X$. Then $\tX$ is of type $D_{2+1}^{(2)}$ and
$t_\mu \in W(\tX)=Y \rtimes W_0$ has reduced decomposition $t_\mu = s_0 s_1 s_2 s_1$. The inversions of $t_\mu$ are
$\alpha_1$, $s_1\alpha_2=\alpha_1+\alpha_2$, $s_1s_2\alpha_1=\alpha_1+2\alpha_2$, and $s_1s_2s_1\alpha_0=\delta+\alpha_1+\alpha_2$. Since $R_+^s(X)=\{\alpha_2,\alpha_1+\alpha_2\}$ and
$R_+^l(X)=\{\alpha_1,\alpha_1+2\alpha_2\}$, looking directly at the $W(\tX)$-orbits of the inversions, we have $\xi_{\mu,\la} = v_0 v_s v_l^2$. Using the formulas,
$R_+^{\vee s}(Y) = \{\alpha_1^\vee,\alpha_1^\vee+\alpha_2^\vee \}$, $R_+^{\vee l}(X)=
\{ \alpha_2^\vee, 2\alpha_1^\vee+\alpha_2^\vee\}$,
$2\rho_s^{\vee X}=2\alpha_1^\vee+\alpha_2^\vee$, $2\rho_l^{\vee X} = 2\alpha_1^\vee+2\alpha_2^\vee$,
$\pair{2\rho_s^{\vee Y}}{\beta} = 2$, and $\pair{2\rho_l^{\vee Y}}{\beta} = 2$, so that  $\xi_{\mu,\la}^e = v_s^2 v_l^2$
and $\xi_{\mu,\la} = v_0 v_s v_l^2$.
\end{ex}

For $w\in W(\tX)$ let
$\Inv_s(w)=\Inv(w)\cap R^s(\tX)$,
$\Inv_l(w)=\Inv(w)\cap R^l(\tX)$, and
$\Inv_0(w)=\Inv(w)\cap R^0(\tX)$. Then for $w\in W(\tX)$,
\begin{align}\label{E:vwx}
  v_w &= v_0^{|\Inv_0(w)|} v_s^{|\Inv_s(w)|} v_l^{|\Inv_l(w)|}.
\end{align}

We recall some facts about real roots in affine root systems.

\begin{lem}\label{L:affineroots}
\cite{Kac} Let $\tX$ be an irreducible reduced
affine root system of untwisted or dual untwisted type.
Suppose $\beta\in R(X)$ and $a\in\Z$.
\begin{enumerate}
\item If $\tX$ is untwisted then $a\delta+\beta \in R(\tX)$ always, and $a\delta+\beta\in R_+(\tX)$ if and only if $a\in \Z_{>0}$ or if $a=0$ and $\beta\in R_+(X)$.
\item If $\tX$ is dual untwisted then $a\delta+\beta\in R(\tX)$ if and only if $a\in ((\beta,\beta)/2)\Z$ where $(,)$ is the $W_0$-invariant pairing defined above.
Moreover $a\delta+\beta\in R_+(\tX)$ if and only if
$a \in ((\beta,\beta)/2) \Z_{>0}$ or $a=0$ and $\beta\in R_+(X)$.
\end{enumerate}
\end{lem}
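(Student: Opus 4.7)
The plan is to compute $R_\rd(\tX) = W_a(\tX)\cdot\{\alpha_i : i\in I\}$ directly, using the semidirect product decomposition $W_a(\tX) = W_0\ltimes Q^Y$ from \eqref{E:WatX} and the translation action $t_\mu(\beta) = \beta - (\mu,\beta)\delta^X$ from \eqref{E:tmuxx}. Since $W_0$ fixes $\delta^X$ and permutes same-length roots of $R(X)$ transitively, the analysis reduces, for each length class, to two inputs: (a) the subgroup $(Q^Y,\beta)\subset\Z$ of $\delta^X$-shifts realized by translating classical simple roots $\alpha_i$ ($i\in I_0$), and (b) the $\delta^X$-shifts contributed by the $W_a(\tX)$-orbit of $\alpha_0 = \delta^X - \theta^X$.

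For part (1), $\tX$ is untwisted and $\theta^X$ is the highest root (long in non-simply-laced types, part of the unique orbit otherwise). The $W_0$-orbit of $\alpha_0$, combined with reflections $s_\gamma$ (which fix $\delta^X$), contributes $\delta^X + \gamma$ for every long $\gamma$. Under the identification $\eta : Q^Y\xrightarrow{\sim}Q^{\vee X}$, translations contribute the subgroup $\pair{Q^{\vee X}}{\beta}\cdot\delta^X$: this equals $\Z$ for short $\beta$ (the short-root subdiagram always contains a simple bond, yielding a Cartan value of $-1$) and may equal $2\Z$ for long $\beta$ (as in $C_n^{(1)}$), with the $W_a(\tX)$-orbit of $\alpha_0$ supplying the missing odd shifts. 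The union is $\{a\delta^X + \beta : a\in\Z,\ \beta\in R(X)\}$.

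For part (2), $\tX$ is dual untwisted and $\theta^X$ is the dominant short root, so $\alpha_0$ itself lies in the short-root orbit. By Remark~\ref{R:DUpair}(2), $(\mu,\beta) = \pair{\beta^\vee}{\mu}\cdot(\beta,\beta)/2$ for $\mu\in Q^Y = Q^X$ and $\beta\in R(X)$, hence $(Q^Y,\beta) = ((\beta,\beta)/2)\Z$. Translates of classical simple roots thus produce $\beta + ((\beta,\beta)/2)\Z\cdot\delta^X$, giving exactly the claimed roots for long $\beta$ (where $\alpha_0$ contributes nothing), and agreeing with the $W_a(\tX)$-orbit of $\alpha_0$ for short $\beta$ (where $(\beta,\beta)/2 = 1$).

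Positivity in both cases follows by writing $a\delta^X + \beta = a\alpha_0 + (a\theta^X + \beta)$ with $\theta^X\in\bigoplus_{i\in I_0}\Z_{\ge 0}\alpha_i$, so that the coefficient of $\alpha_0$ is exactly $a$: the root is positive precisely when $a>0$ in the permitted divisibility class, or $a=0$ with $\beta\in R_+(X)$. The main obstacle is the orbit bookkeeping in the non-simply-laced untwisted cases of part (1), where translation shifts alone need not cover all of $\Z$; here Lemma~\ref{L:reducedorbitafex} organizes the two relevant affine orbits and makes transparent how the $W_a(\tX)$-orbit of $\alpha_0$ supplies the remaining shifts.
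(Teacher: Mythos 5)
The paper does not actually prove this lemma; it is quoted from \cite{Kac} with no argument, so any self-contained verification is by definition a different route. Your strategy --- decompose $W_a(\tX)=W_0\ltimes Q^Y$, observe that translations shift a classical root $\beta$ by the subgroup $(Q^Y,\beta)\delta^X$, add in the $W_a(\tX)$-orbit of $\alpha_0=\delta^X-\theta^X$, and read off positivity from the coefficient of $\alpha_0$ --- is the natural one, and the positivity step is clean and correct. What this buys over the paper is an actual proof tied to the paper's own conventions for $(\cdot,\cdot)$ and $\eta$, which is worth having since the normalization of the pairing is exactly where errors tend to creep in.

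That said, two of your intermediate justifications are wrong as stated, even though the final union of orbits comes out right. In part (2) you write ``hence $(Q^Y,\beta)=((\beta,\beta)/2)\Z$''; this silently uses $\pair{\beta^\vee}{Q^X}=\Z$, which fails precisely for short $\beta$ when $\tX=D_{n+1}^{(2)}$: there $\beta^\vee$ is twice a primitive functional (e.g.\ $2e_i^*$ on $Q(B_n)=\Z^n$), so translations of the short simple root realize only $2\Z\,\delta^X+\beta$, not $\Z\,\delta^X+\beta$. This is exactly the content of Lemma~\ref{L:reducedorbitafex} and Example~\ref{X:D2root}: the odd shifts on short roots form a \emph{separate} $W_a(\tX)$-orbit, namely that of $\alpha_0$. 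Your conclusion survives only because you also include the $\alpha_0$-orbit, but you describe it as ``agreeing with'' the translation contribution when it is in fact strictly needed to supply the odd shifts; as written, your displayed identity contradicts the paper's own orbit decomposition. A smaller instance of the same issue occurs in part (1): your reason that $\pair{Q^{\vee X}}{\beta}=\Z$ for short $\beta$ (``the short-root subdiagram always contains a simple bond'') fails for $B_n$, where there is a single short simple node; the odd value comes instead from the adjacent long simple coroot, $\pair{\alpha_{n-1}^\vee}{\alpha_n}=-1$. Both defects are local and repairable --- compute $\pair{\alpha_i^\vee}{Q^X}$ case by case, or lean explicitly on Lemma~\ref{L:reducedorbitafex} for the three exceptional types --- but as written the ``hence'' in part (2) is a false statement, not merely a gap.
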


\begin{lem} \label{L:tinv}
Suppose $\mu\in Y_+$
and $a\delta+\beta\in R(\tX)$ for $a\in\Z$ and $\beta\in R(X)$. Then 
$a\delta+\beta\in \Inv(t_\mu)$ if and only if $0\le a < (\mu,\beta)$ and $\beta\in R_+(X)$.
\end{lem}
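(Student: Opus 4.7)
The plan is to unpack the definition $\Inv(t_\mu)=R_+(\tX)\cap -t_\mu^{-1}R_+(\tX)$, rewrite the second condition using the translation formula, and then apply Lemma \ref{L:affineroots} to each side.

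First, since $t_\mu$ fixes $\delta$ and \eqref{E:tmuxx} gives $t_\mu(\beta)=\beta-(\mu,\beta)\delta$, we have
\begin{align*}
t_\mu(a\delta+\beta)=(a-(\mu,\beta))\delta+\beta.
\end{align*}
Hence $a\delta+\beta\in\Inv(t_\mu)$ iff both $a\delta+\beta\in R_+(\tX)$ and $((\mu,\beta)-a)\delta-\beta\in R_+(\tX)$, where the latter root lies in $R(\tX)$ because $t_\mu$ permutes $R(\tX)$.

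Next I invoke Lemma \ref{L:affineroots}: a root $c\delta+\gamma\in R(\tX)$ lies in $R_+(\tX)$ exactly when $c>0$, or $c=0$ and $\gamma\in R_+(X)$. Applying this to both conditions, membership in $\Inv(t_\mu)$ becomes the disjunction
\begin{align*}
\bigl[a>0\text{ or }(a=0,\ \beta\in R_+(X))\bigr]\ \wedge\ \bigl[(\mu,\beta)-a>0\text{ or }((\mu,\beta)=a,\ -\beta\in R_+(X))\bigr].
\end{align*}
The key auxiliary fact I will use is that $\mu\in Y_+$ forces $\eta(\mu)$ to be a dominant coweight (this is immediate from the $W_0$-equivariance of $\eta$), so by \eqref{E:pairing} we have $(\mu,\beta)\ge 0$ for $\beta\in R_+(X)$ and $(\mu,\beta)\le 0$ for $\beta\in R_-(X)$.

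Now I run the case analysis. If $a=0$, positivity forces $\beta\in R_+(X)$, and then the second clause demands $(\mu,\beta)>0$ (the boundary alternative is ruled out since $\beta$ is positive), which with dominance of $\mu$ is automatic unless $(\mu,\beta)=0$; both clauses together give $0\le a<(\mu,\beta)$. If $a>0$, I must rule out $(\mu,\beta)=a$: in that equality case the second clause would need $\beta\in R_-(X)$, forcing $(\mu,\beta)\le 0$, contradicting $a>0$. So the second clause collapses to $a<(\mu,\beta)$, which gives $(\mu,\beta)>0$ and hence (by dominance of $\mu$) $\beta\in R_+(X)$. Conversely, if $0\le a<(\mu,\beta)$ and $\beta\in R_+(X)$, both positivity conditions above hold by Lemma \ref{L:affineroots}, proving $a\delta+\beta\in\Inv(t_\mu)$.

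The only mildly delicate point is checking that the dichotomy in Lemma \ref{L:affineroots} applies even when $\beta$ is a doubled root (so that the translation identity and the positivity criterion both go through for $\beta\in R(X)$, not just $R_\rd(X)$); this is a direct consequence of the definition \eqref{R:nonredroots} and the $W(\tX)$-equivariance of positivity, so no new idea is needed.
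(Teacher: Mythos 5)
Your proof is correct and follows essentially the same route as the paper, whose entire argument is the one-line observation that $t_\mu(a\delta+\beta)=\beta+(a-(\mu,\beta))\delta$; you have simply filled in the details that "follows easily" suppresses (the positivity criterion of Lemma \ref{L:affineroots} and the dominance of $\mu$), and your case analysis is sound. The closing worry about doubled roots is moot, since the hypothesis $\beta\in R(X)$ already excludes them ($R(X)$ is reduced, and doubled affine roots have classical part $2\beta$), but raising and dismissing it does no harm.
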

\begin{proof} This follows easily from the formula
\begin{align}
\label{E:transroot}
t_\mu (a\delta+\beta)= \beta + (a - (\mu,\beta))\delta,
\end{align}
which holds for any $\mu\in Y$, $\beta\in X$, and $a\in (1/m)\Z$;
see \eqref{E:tmuxx}.
\end{proof}

Let $R_+^s(X)$ (resp. $R_+^l(X)$) denote the set of short (resp. long) roots in $R_+(X)$ and let $R^{\vee s}_+(X)$
(resp. $R^{\vee l}_+(X)$
be the set of short (resp. long) positive coroots.

Let $2\rho_s^X = \sum_{\alpha\in R_+^s(X)} \alpha$,
$2\rho_l^X = \sum_{\alpha\in R_+^l(X)} \alpha$,
$2\rho_s^{\vee X} = \sum_{\alpha\in R_+^{\vee s}(X)} \alpha$, and
$2\rho_l^{\vee X} = \sum_{\alpha\in R_+^{\vee l}(X)} \alpha$.

\begin{lem} \label{L:tinv0} Suppose $\mu\in Y_+$.
\begin{enumerate}
\item If $R^0(\tX)=\emptyset$, then
$|\Inv_s(t_\mu)|=\pair{2\rho_s^{\vee Y}}{\mu}$ and
$|\Inv_l(t_\mu)|=\pair{2\rho_l^{\vee Y}}{\mu}$ for $\tX$ untwisted and $|\Inv_s(t_\mu)|=\pair{2\rho_l^{\vee Y}}{\mu}$ and
$|\Inv_l(t_\mu)|=\pair{2\rho_s^{\vee Y}}{\mu}$ for $\tX$ dual untwisted.
\item Suppose $R^0(\tX)\ne\emptyset$.
If $\alpha_n$ is short (resp. long) then $|\Inv_0(t_\mu)|=
|\Inv_s(t_\mu)|$ (resp. $|\Inv_0(t_\mu)|=
|\Inv_l(t_\mu)|$), the formula for $|\Inv_l(t_\mu)|$ (resp. $|\Inv_s(t_\mu)|$) is as before, and the formula for
$|\Inv_s(t_\mu)|$ (resp. $|\Inv_l(t_\mu)|$) is $1/2$ the value in the previous case.
\end{enumerate}
\end{lem}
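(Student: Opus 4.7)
The plan is to apply Lemma \ref{L:tinv} root-by-root. For $\mu\in Y_+$ one has
\[
\Inv(t_\mu) = \{a\delta^X+\beta : \beta\in R_+(X),\ 0\le a < (\mu,\beta),\ a\delta^X+\beta\in R_\rd(\tX)\},
\]
so for each orbit type $\ast\in\{s,l,0\}$ the cardinality $|\Inv_\ast(t_\mu)|$ decomposes as a double sum over $\beta$ and $a$. I will determine which $(a,\beta)$ yield real affine roots using Lemma \ref{L:affineroots}, and which $W(\tX)$-orbit the resulting root $a\delta^X+\beta$ belongs to using Lemma \ref{L:reducedorbitafex}.

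In Case (1), with $R^0(\tX)=\emptyset$, the $W(\tX)$-orbit of $a\delta^X+\beta$ is determined solely by the length of $\beta$. In the untwisted subcase, Lemma \ref{L:affineroots}(1) admits all $a\in\Z$, giving $(\mu,\beta)$ valid values per $\beta$. Using $(\mu,\beta)=\pair{\beta^{Y\vee}}{\mu}$ (available since $\gamma_i=1$) and the fact that $\beta\mapsto\beta^{Y\vee}$ exchanges short/long roots of $X$ with short/long coroots of $Y$, the sum evaluates to $\pair{2\rho^{\vee Y}_s}{\mu}$ and $\pair{2\rho^{\vee Y}_l}{\mu}$. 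In the dual untwisted subcase, Lemma \ref{L:affineroots}(2) restricts $a$ to $((\beta,\beta)/2)\Z$; combined with \eqref{E:pair}, the count per $\beta$ becomes $\pair{\beta^\vee}{\mu}$ regardless of length, and since $Q^X=Q^Y$ with coroots reversing lengths, summing produces the swapped formulas stated for dual untwisted type.

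In Case (2), Lemma \ref{L:reducedorbitafex} shows that for $\tX\in\{A_1^{(1)},C_n^{(1)},D_{n+1}^{(2)}\}$ the roots $a\delta^X+\beta$ with $\beta\in W_0\alpha_n$ split by parity of $a$: even $a$ places the root in the orbit of $\alpha_n$, odd $a$ places it in $R^0$. Splitting the relevant Case (1) sum by parity, the two halves are equal to each other and to half the unsplit total, provided that $(\mu,\beta)\in 2\Z$ for every $\beta\in W_0\alpha_n$. For $\beta$ of the length opposite to $\alpha_n$ the orbit structure is unchanged, so that part contributes exactly as ``before''. This yields $|\Inv_0(t_\mu)|=|\Inv_s(t_\mu)|$ or $|\Inv_l(t_\mu)|$ according to whether $\alpha_n$ is short or long, together with the halving of the Case (1) formula for the matching length.

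The main obstacle is the parity condition $(\mu,\beta)\in 2\Z$ for $\beta\in W_0\alpha_n$. For $D_{n+1}^{(2)}$ it follows from \eqref{E:pair} using $Q^X=Q^Y$ and the fact that the short roots of $B_n$ have even-norm coroots in the invariant pairing normalized so that short roots have squared length $2$. For $C_n^{(1)}$ and $A_1^{(1)}$ the verification uses the untwisted identification $\eta$ sending $e_i\in Q^Y$ to the short coroot $e_i^*\in Q^{X\vee}$, yielding $(\mu,2e_i)=2\mu_i\in 2\Z$ because $\mu$ lies in the integral lattice $Q^Y$. The argument therefore uses the lattice structure of $Y$ essentially, and I expect this parity check is the step most demanding of care.
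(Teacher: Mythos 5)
Your proposal is correct and follows essentially the same route as the paper: apply Lemma \ref{L:tinv} together with Lemma \ref{L:affineroots} to count, for each $\beta\in R_+(X)$, the admissible values of $a$, convert $(\mu,\beta)$ to a coroot pairing via $\eta$ and \eqref{E:pair}, and in case (2) split the orbit $W_0\alpha_n$ contributions by the parity of $a$ using Lemma \ref{L:reducedorbitafex}. The only difference is that you spell out the parity condition $(\mu,\beta)\in 2\Z$ (correctly noting it rests on $\mu$ lying in $Q^Y$, which is forced when $R^0(\tX)\neq\emptyset$), a point the paper dispatches with ``one may check.''
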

\begin{proof}
Suppose first that $\tX$ is untwisted and $R^0(\tX)=\emptyset$. By Lemmas \ref{L:affineroots} and \ref{L:tinv} the elements of $\Inv_s(t_\mu)$
consist of $a\delta+\beta$ where $\beta \in R_+^s(X)$ and $0\le a < (\mu,\beta)$. It follows that
$|\Inv_s(t_\mu)| = (\mu,2\rho_s^X)=\pair{2\rho_s^{\vee Y}}{\mu}$. The second equality holds by the definition of $(,)$ and the observation that short roots of type $X$ map to short coroots of type $Y$.
Similarly we have $|\Inv_l(t_\mu)| = \pair{2\rho_l^{\vee Y}}{\mu}$.

Suppose next that $\tX$ is dual untwisted and
$R^0(\tX)=\emptyset$. 
By Lemma \ref{L:tinv} the elements of $\Inv_l(t_\mu)$ are precisely given by $a\delta + \beta$ where $\beta\in R_+^l(X)$, $0 \le a < (\mu,\beta)$,
and $a\in \Z$ is a multiple of $r = (1/2)(\beta,\beta)$.
By \eqref{E:pair}
we have $(\mu,\beta) = r \pair{\beta^\vee}{\mu}\in r \Z$.
We deduce that for a fixed long root $\beta\in R_+(X)$, the number of $a$ such that $a\delta+\beta\in\Inv_l(t_\mu)$, is $\pair{\beta^\vee}{\mu}$. In dual untwisted type, $X$ and $Y$ have the same Cartan type. Therefore
$|\Inv_l(t_\mu)| = \pair{2\rho_s^{\vee Y}}{\mu}$ since long roots have short associated coroots.

A similar argument for short roots shows that
$|\Inv_s(t_\mu)|$ is as stated.
% = \pair{2\rho_l^{\vee Y}}{\mu}$.

Finally, suppose $R^0(\tX)\ne\emptyset$. This happens exactly when $W_a(\tX)$ and $W_e(\tX)$ have different orbit structure on $R(\tX)$. Lemma \ref{L:reducedorbitafex} lists the cases when this occurs. One may check that in all cases, $(P^X,\beta)\in 2\Z$ for $\beta \in W_0(\alpha_n)$. It follows that $|\Inv_0(t_\mu)|
= |\Inv(t_\mu) \cap W(\tX)(\alpha_n)|$, which equals
$|\Inv_s(t_\mu)|$ (resp. $|\Inv_l(t_\mu)|$) if $\alpha_n$ is
short (resp. long). The rest may be deduced using similar arguments.
\end{proof}

\begin{proof}[Proof of Prop. \ref{P:eigen}]
By the definition of $Y^\mu$ in \S \ref{SS:DAHAdual},
the fact that the map $\mu\mapsto \chi_{\mu,\la}$ is a group homomorphism, and that the expressions for $\chi_{\mu,\la}$ also define a group homomorphism, we may assume that $\mu\in Y_+$.

We first consider the case $\la=0$.
Since $E_0=\bo\in\Pol$, by the definition of $Y_\mu$
and \eqref{E:TXeigen} the result reduces to counting the inversions of $t_\mu$ according to $W(\tY)$-orbit, which is accomplished by Lemma \ref{L:tinv0}.

The proof of \cite[Prop. 6.9]{Hai} can be adapted to deduce the result for all $\la\in X$.
\end{proof}

\section{Ram-Yip formula}
We record the details of the Ram-Yip formula for the nonsymmetric Macdonald-Koornwinder polynomials, for not-necessarily-reduced DAHAs. The formula in \cite{RY} applies to the reduced equal-parameter case.

\subsection{The elements $X^w$}
Let $(X,Y)$ be a double affine datum and $\H=\H(X,Y)$.

Let $w\in W(\tY)$. Consider a not-necessarily-reduced factorization
\begin{align}\label{E:arbfact}
w = \pi^Y s_{i_1}s_{i_2}\dotsm s_{i_\ell}
\end{align}
where $\pi^Y\in \Pi^Y$ and $\wa=(i_1,i_2,\dotsc,i_\ell)$ is a sequence of elements in $I^Y$. Define
\begin{align}
\label{E:Xdef}
  X^{\pi^Y;\wa} = \pi^Y \,(T^Y_{i_1})^{\epsilon_1}  (T^Y_{i_2})^{\epsilon_2} \dotsm  (T^Y_{i_\ell})^{\epsilon_\ell} 
\end{align}
where
\begin{align}
\label{E:sign}
\epsilon_k = \begin{cases}
1 &\text{if $\pi^Y s_{i_1}\dotsm s_{i_{k-1}}( \alpha^Y_{i_k}) \in\Z\delta^Y+R_+(Y)$} \\
-1 & \text{if $\pi^Y s_{i_1}\dotsm s_{i_{k-1}}( \alpha^Y_{i_k}) \in\Z\delta^Y-R_+(Y)$.}
\end{cases}
\end{align}

\begin{lem} \cite{Go} \cite{RY}
\label{L:Xv}
For every $w\in W(\tY)$, the element $X^{\pi^Y;\wa}$ is independent of the factorization
$w=\pi^Y s_{i_1}\dotsm s_{i_\ell}$ and hence well-defines an element $X^w\in\HH$. Moreover,
\begin{align}
%& X^w = X^{\wt(w)}T_{\dir{w}} \\
&\label{E:Xonpoly}
  X^w \bo = v_{\dir{w}} \, X^{\wt(w)}
\end{align}
in $\Pol$, where the elements $\wt(w)\in X$ and $\dir{w}\in W_0$ are defined by the decomposition $W(\tY) \cong X \rtimes W_0$:
\begin{align}
\label{E:enddir}
w = t_{\wt(w)} \dir{w}.
\end{align}
\end{lem}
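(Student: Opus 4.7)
Plan. For part (1), the plan is to show $X^{\pi^Y;\wa}$ is invariant under the elementary moves connecting any two factorizations of a given $w\in W(\tY)$: (a) commutations $\pi^Y s_i^Y = s_{\pi^Y(i)}^Y \pi^Y$, which would be handled by the DAHA relation $\pi^Y T_i^Y = T_{\pi^Y(i)}^Y \pi^Y$; (b) braid relations in $W_a(\tY)$, handled by the matching braid relations among the $T_i^Y$'s combined with a case-by-case check that the signs $\epsilon_k$ transform compatibly (a finite verification for each $m_{ij}\in\{2,3,4,6\}$); and (c) insertions of cancelling pairs $s_i^Y s_i^Y = 1$. For (c), inserting $s_i^Y s_i^Y$ at position $k$ produces two new intermediate roots $\xi$ and $s_i^Y(\xi)$ whose classical parts differ by a sign (since $s_i^Y$ reflects across $\alpha_i^Y$), so the two new signs come out opposite and $(T_i^Y)^{+}(T_i^Y)^{-}=1$ leaves the product unchanged.

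For part (2), I would use part (1) to pick a convenient factorization. Write $w = t_\la \sigma$ with $\la = \wt(w)\in X$ and $\sigma = \dir{w}\in W_0$, and take as factorization of $w$ any factorization of $t_\la$ followed by a reduced expression $\sigma = s_{i_1}\dotsm s_{i_r}$ in $W_0$ (with $i_k\in I_0$, so $T_{i_k}^Y = T_{i_k}^X$). Since $t_\la$ acts trivially on classical parts of roots by \eqref{E:tmuxx}, the intermediate root at each appended position $k$ has classical part $s_{i_1}\dotsm s_{i_{k-1}}(\alpha_{i_k}) \in R_+(Y)$, positive because $\sigma$'s expression is reduced. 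Hence the corresponding $\epsilon$'s are all $+1$, and these factors contribute exactly $T_\sigma^X$. By \eqref{E:TXeigen},
\[
X^w \bo = X^{t_\la} T_\sigma^X \bo = v_\sigma\, X^{t_\la}\bo.
\]

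The problem reduces to the identity $X^{t_\la}\bo = X^\la$ for $\la\in X$, which I would deduce from the stronger $\HH$-identity $X^{t_\la} = X^\la$. For $\la = \omega_i^X$, use $t_{\omega_i^X} = \pi_i^Y u_i$ (the $Y$-side instance of \eqref{E:autotransfinite}) with a reduced expression $u_i = s_{k_1}\dotsm s_{k_p}$; a direct calculation shows all signs $\epsilon_j$ at the appended positions come out to $-1$, since the classical part of the intermediate root at position $j$ equals $u_i^{-1}(s_{k_1}\dotsm s_{k_{j-1}}(\alpha_{k_j})) = -s_{k_p}\dotsm s_{k_{j+1}}(\alpha_{k_j})$, and $s_{k_p}\dotsm s_{k_{j+1}}(\alpha_{k_j})$ is positive (an inversion of the reversed reduced expression for $u_i^{-1}$). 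Consequently
\[
X^{t_{\omega_i^X}} = \pi_i^Y (T_{k_1}^X)^{-1}\dotsm(T_{k_p}^X)^{-1} = X^{\omega_i^X}\, T_{u_i^{-1}}^X \cdot (T_{u_i^{-1}}^X)^{-1} = X^{\omega_i^X}.
\]
For general $\la = \sum_i n_i\omega_i^X$, the translations $t_{\omega_i^X}$ commute and preserve classical parts, so concatenating the factorizations yields the same signs as on each individual piece, giving $X^{t_\la} = \prod_i X^{n_i\omega_i^X} = X^\la$. The case $n_i<0$ is handled analogously using the $s_0^Y$-factorization of $t_{-\omega_i^X}$ together with the identity $(T_0^Y)^{-1} = X^\vartheta T_{s_\vartheta}^X$.

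The principal obstacles will be the finite sign-tracking in part (1)(b) (case-by-case verification of braid-move invariance, routine in each case) and the combinatorial bookkeeping in part (2) that shows the Ram-Yip signs telescope as expected under concatenation of translation factorizations to produce the clean identity $X^{t_\la} = X^\la$.
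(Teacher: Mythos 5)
The paper itself offers no proof of this lemma; it defers to \cite{Go} and \cite{RY}, and the remark immediately following it notes that the key identity $X^{t_\la}=X^\la$ is a consequence of the duality theorem (Theorem \ref{T:dualitydifferent}). So your attempt is being measured against the standard argument in those references, and your overall architecture matches it: part (1) via Tits' solution of the word problem (braid moves plus insertion/deletion of $s_is_i$), with the nil-move case correctly dispatched by the observation that the two inserted intermediate roots are negatives of each other, and the braid-move case correctly reduced to a finite signed-braid verification in each rank-two subsystem; part (2) via the reduction $X^w\bo = X^{t_\la}T^X_{\dir{w}}\bo = v_{\dir{w}}X^{t_\la}\bo$, which is correct (the positivity of the classical parts along the appended reduced word for $\dir{w}$, hence $\epsilon_k=+1$, is exactly right).

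The one genuine gap is in your verification of $X^{t_\la}=X^\la$. Your direct computation $X^{t_{\omega_i^X}}=\pi_i^Y(T^X_{u_i^{-1}})^{-1}=X^{\omega_i^X}$ relies on the factorization $t_{\omega_i^X}=\pi_i^Y u_i$ with $u_i\in W_0$, which is available only for $i\in I^{Y\spc}$ (the special nodes); and writing a general $\la$ as $\sum_i n_i\omega_i^X$ presumes $X=P^X$, which fails for instance in the Koornwinder case $X=Q(B_n)$. For non-special $i$, $t_{\omega_i^X}$ is not $\pi_i^Y$ times an element of $W_0$, so the sign computation you give does not apply. The fix stays within your framework: your concatenation argument (correctly exploiting that $t_\la$ preserves classical parts of roots, see \eqref{E:tmuxx}) gives multiplicativity $X^{t_\la}X^{t_\mu}=X^{t_{\la+\mu}}$, so it suffices to treat a generating set of $X$, namely the special fundamental weights (your computation) together with the root lattice $Q^X$; for the latter one computes, e.g., $X^{t_\vartheta}$ from the reduced factorization $t_\vartheta=s_0^Y s_\vartheta$ of \eqref{E:s0Y}, where all signs come out $-1$ and $(T_0^Y)^{-1}(T_{s_\vartheta})^{-1}=X^\vartheta T_{s_\vartheta}(T_{s_\vartheta})^{-1}=X^\vartheta$, and then uses $W_0$-conjugation and the fact that short roots span $Q^X$. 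Alternatively, and more in the spirit of the paper, one can bypass all of these computations by quoting Theorem \ref{T:dualitydifferent}, under which the elements built from the $T_i^Y$ by signed products over translations are precisely the generators $X^\la$ of $\HH(X,Y)$.
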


\begin{rem} 
When $w=t_{\la}$ for $\la\in X$, the element $X^w$ is equal to the generator $X^\la$ of $\HH$.
This is a consequence of Theorem \ref{T:dualitydifferent}.
\end{rem}

\begin{rem}\label{R:W0}
If $w\in W_0$ then $X^w = T_w$. This follows from \eqref{E:Inv}.
\end{rem}

\begin{ex} For any $(X,Y)$ let $i\in I$.
Let $w=\id = s_i s_i$ so $\pi^Y=\id$ and $\wa=(i,i)$.
We have $\pi^Y \alpha_i^Y = \alpha_i^Y$ 
and $\pi^Y s_i \alpha_i^Y = -\alpha_i^Y$ so $\epsilon_1=1$ and $\epsilon_2=-1$.
We have $X^{\id;(1,1)} = (T^Y_i) (T^Y_i)^{-1} = \id$.
\end{ex}

\begin{ex} Let $(X,Y)=(P(A_2),P(A_2))$ so that
$\tX$ and $\tY$ are both of reduced type $A_2^{(1)}$.
There is a single Hecke parameter, which we denote by $v$.
Let $w = t_{\omega_1} = \pi^Y_1 s_2 s_1$, which is reduced.
We have $\wt(w)=\omega_1$, $\dir{w}=\id$,
$\pi^Y_1 \alpha^Y_2 = \alpha^Y_0 = \delta^Y-\alpha_1^Y-\alpha_2^Y$,
$\pi^Y_1 s_2 \alpha^Y_1 = \pi^Y_1 (\alpha^Y_1+\alpha^Y_2)=\alpha^Y_2+\alpha^Y_0 = \delta^Y-\alpha^Y_1$, and
\begin{align*}
  X^{t_{\omega_1}} \bo &= \pi^Y_1 (T^Y_2)^{-1} (T^Y_1)^{-1} \bo = X^{\omega_1}.
\end{align*}
\end{ex}

\begin{ex} Let $(X,Y)$ be as in the previous example.
Let $\la = 2\omega_1 - \omega_2 = \al_1$. Then $u_\la = s_2 s_1$.
Let $w = t_\la s_1 s_2 = s_2 s_0$; this expression is reduced.
We have $s_2(\al_0^Y)=\al_0^Y+\al_2^Y=-\al_1^Y+\de^Y$ and hence
\begin{align*}
X^w \bo &= T_2^Y (T_0^Y)^{-1} \bo = T_2 X^\vartheta T_{s_\vartheta} \bo
= X^{s_2(\vartheta)} T_2^{-1} T_{s_\vartheta} \bo = v^2 X^{\al_1}
\end{align*}
where $\vartheta=\vartheta^X=\theta^X=\al_1+\al_2$.
This agrees with $\wt(w)=\la=\al_1$ and $\dir{w}=s_1 s_2$.
\end{ex}

\comment{
\begin{ex} Let $(X,Y)$ be as in the previous example.
Let $\la = 3\omega_1-\omega_2$. Then $u_\la = s_2s_1$.
Let $w = t_\la s_1 s_2 = \pi^Y_1 s_1 s_2 s_1 s_0$;
this expression is reduced. We have
$\pi^Y_1 \alpha^Y_1 = \alpha^Y_2$, 
$\pi^Y_1 s_1 \alpha^Y_2 = \delta^Y-\alpha^Y_1$,
$\pi^Y_1 s_1 s_2 \alpha^Y_1 = \delta^Y-\alpha^Y_1-\alpha^Y_2$,
$\pi^Y_1 s_1 s_2 s_1 \alpha^Y_0 = 2\delta^Y-\alpha^Y_1$.
\begin{align*}
  X^w \bo &= \pi^Y_1 (T_1^Y) (T^Y_2)^{-1} (T^Y_1)^{-1} (T^Y_0)^{-1} \bo \\
  &= v^2 X^\la,
\end{align*}
agreeing with $\wt(w)=\la$ and $\dir{w}=s_1s_2$.
\end{ex}
}

\subsection{Ram-Yip DAHA formula, preliminary version}
\label{SS:RYDAHA}
The goal of this subsection is to state a form of \cite[Thm. 2.2]{RY} for arbitrary DAHAs.

Let $(X,Y)$ be a double affine datum and $w\in W(\tY)$.
Let $w=\pi^Y s_{i_1}\dotsm s_{i_\ell}$ be a reduced factorization and write $\wa=(i_1,\dotsc,i_\ell)$.
Define the sequence of affine real roots $\beta_k\in R(\tY)$ by
\begin{align}
\label{E:betadef}
\beta_k=\beta_k(\wa) = s_{i_\ell} \dotsm s_{i_{k+1}} \alpha_{i_k}^Y\qquad\text{for $1\le k\le \ell$.}
\end{align}
These comprise the set $\Inv(w)=\Inv((\pi^Y)^{-1}w)$
by \eqref{E:Inv}.

\begin{ex}\label{X:setup} With the double affine datum of Example \ref{X:D2trans}, let $w=t_{-\vartheta}$. We have $\pi^Y=\id$ and reduced word $\wa=(1,2,1,0)$, and inversions
$\beta_1=2\delta-(e_1-e_2)$, $\beta_2=2\delta-e_1$,  $\beta_3=2\delta-(e_1+e_2)$, $\beta_4=\delta-e_1$.
\end{ex}

Fix an element $u\in W(\tY)$.

Let $b=(b_1,b_2,\dotsc,b_\ell) \in \{0,1\}^\ell$ be a binary word of length $\ell$. It is used to select an arbitrary subsequence of reflections from $\wa$.
Define the elements $u_k\in W(\tY)$ by
\begin{align}
\label{E:u0}
  u_0 &= u \pi^Y \\
\label{E:udef}
  u_k &= u_{k-1} s_{i_k}^{b_k}\qquad\text{for $1\le k\le \ell$} \\
\label{E:uend}
  \ep(b) &= u_\ell.
\end{align}
Define the signs $\epsilon_k\in \{\pm 1\}$ by
\begin{align}
\label{E:signsforpath}
  \epsilon_k = \begin{cases}
  1 & \text{if $u_{k-1} \alpha_{i_k}^Y \in \Z\delta^Y + R_+(Y)$} \\
  -1 & \text{if $u_{k-1} \alpha_{i_k}^Y \in \Z\delta^Y - R_+(Y)$.} \\
  \end{cases}
\end{align}
If $u=\id$ and $b=(1,\ldots,1)$ this agrees with the previous definition of $\epsilon_k$ from \eqref{E:sign}.

\begin{ex}\label{X:D2path} Take $w$ and $\wa$ from Example \ref{X:setup}
and $u=\id$. Let $b=(0,1,1,0)$. Then $(u_0,\dotsc,u_4)=(\id,\id,s_2, s_2s_1,s_2s_1)$ and $(\epsilon_1,\dotsc,\epsilon_4)=(1,1,1,1)$.
\end{ex}

Define the elements $c_k=c_k(\wa)$ and $d_k=d_k(\wa) \in \K$ for $1\le k\le \ell$ by
\begin{align}
\label{E:firstscalar}
c_k &= v_{\alpha_{i_k}^Y}^{-1} - v_{\alpha_{i_k}^Y} \\
\label{E:secondscalar}
d_k &= v_{2\alpha_{i_k}^Y}^{-1} - v_{2\alpha_{i_k}^Y} 
\end{align}

\begin{thm} \label{T:tauexpDAHA} \cite[Thm. 2.2]{RY}
\begin{align}
  X^u \psi_w &= \sum_{b=(b_1,\dotsc,b_\ell)} X^{\ep(b)} \prod_{b_k=0} \begin{cases} \dfrac{c_k +d_kY^{-\beta_k}}{1-Y^{-2\beta_k}} &\text{if $\epsilon_k=1$} \\
  \dfrac{c_k Y^{-2\beta_k} +d_kY^{-\beta_k}}{1-Y^{-2\beta_k}} & \text{if $\epsilon_k=-1$}
  \end{cases}
\end{align}
\end{thm}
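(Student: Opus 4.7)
The plan is to induct on $\ell = \ell(w)$, peeling off the rightmost simple reflection. For the base case $\ell = 0$ we have $w = \pi^Y$ and $\psi_w = \pi^Y$, so only the empty binary word contributes; the desired identity $X^u\pi^Y = X^{u\pi^Y}$ follows from Lemma \ref{L:Xv} applied to any reduced factorization of $u$, using $\pi^Y T_j^Y = T_{\pi^Y(j)}^Y \pi^Y$ together with the compatible transformation of the signs in \eqref{E:sign}.

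For the inductive step, write $w = w' s_{i_\ell}$ with $w' = \pi^Y s_{i_1}\cdots s_{i_{\ell-1}}$ reduced, so that $\psi_w = \psi_{w'}\psi_{i_\ell}$ and the inversions satisfy $\beta_k = s_{i_\ell}\beta_k'$ for $k<\ell$ (with $\beta_k'$ the inversions of $w'$) together with $\beta_\ell = \alpha_{i_\ell}^Y$. I would apply the inductive hypothesis to $X^u \psi_{w'}$, then multiply by $\psi_{i_\ell}$ on the right and use the intertwiner relation \eqref{E:intertwinerY} to commute the resulting rational functions in the $Y^{-\beta_k'}$ past $\psi_{i_\ell}$; this replaces each $\beta_k'$ by $\beta_k$ (since $s^Y_{i_\ell,q}\beta_k' = \beta_k$) and produces
\[
X^u \psi_w = \sum_{b'} X^{\ep'(b')}\, \psi_{i_\ell}\, F_{b'}\!\left(Y^{-\beta_1},\ldots,Y^{-\beta_{\ell-1}}\right).
\]

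Next, expand $X^{\ep'(b')} \psi_{i_\ell}$ using formula \eqref{E:psi}. By well-definedness of $X^v$ (Lemma \ref{L:Xv}) applied to the factorization obtained by appending $s_{i_\ell}$, one gets $X^{\ep'(b')} T_{i_\ell}^Y = X^{\ep'(b')s_{i_\ell}}$ when $\epsilon_\ell = +1$, and $X^{\ep'(b')} T_{i_\ell}^Y = X^{\ep'(b')s_{i_\ell}} - c_\ell X^{\ep'(b')}$ when $\epsilon_\ell = -1$, the extra summand arising from the quadratic relation \eqref{E:quadred} for $T_{i_\ell}^Y$. In the $\epsilon_\ell = +1$ case the two resulting summands of $X^{\ep'(b')}\psi_{i_\ell}$ directly yield the $b_\ell = 1$ contribution (no factor) and the $b_\ell = 0$ contribution with factor $(c_\ell + d_\ell Y^{-\beta_\ell})/(1 - Y^{-2\beta_\ell})$. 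In the $\epsilon_\ell = -1$ case, combining $-c_\ell X^{\ep'(b')}$ with the $\psi_{i_\ell}$-correction yields
\[
-c_\ell + \frac{c_\ell + d_\ell Y^{-\alpha_{i_\ell}^Y}}{1 - Y^{-2\alpha_{i_\ell}^Y}} = \frac{c_\ell Y^{-2\beta_\ell} + d_\ell Y^{-\beta_\ell}}{1 - Y^{-2\beta_\ell}},
\]
precisely the alternative factor. Summing over $b'$ and the new choice $b_\ell \in \{0,1\}$ produces the desired sum over $b \in \{0,1\}^\ell$.

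The main technical obstacle will be the sign-and-quadratic-relation bookkeeping in the $\epsilon_\ell = -1$ case, where the displayed algebraic identity must exactly reproduce the alternative form of the factor. A secondary subtlety is to check that no $\beta_k'$ with $k < \ell$ coincides with $\pm \alpha_{i_\ell}^Y$, so that the formal intertwiner commutation through the rational functions is well-defined; this is automatic since the $\beta_k$ are distinct positive inversions of $w$.
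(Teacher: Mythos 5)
Your proof is correct and is essentially the paper's argument run in the opposite direction: the paper expands $\psi_{i_1}$ first and pushes its rational factor rightward past the remaining intertwiners, recursing on the suffix $\psi_{i_2}\dotsm\psi_{i_\ell}$, whereas you induct on the prefix $w'=\pi^Y s_{i_1}\dotsm s_{i_{\ell-1}}$ and absorb the last intertwiner $\psi_{i_\ell}$. All the ingredients coincide --- the two expansions \eqref{E:psi} and \eqref{E:psineg} (your quadratic-relation computation in the $\epsilon_\ell=-1$ case is precisely the derivation of \eqref{E:psineg} from \eqref{E:psi}), the intertwining relation \eqref{E:intertwinerY} used to twist $\beta_k'$ into $\beta_k=s_{i_\ell}\beta_k'$, and Lemma \ref{L:Xv} to justify $X^{\ep(b')}(T_{i_\ell}^Y)^{\epsilon_\ell}=X^{\ep(b')s_{i_\ell}}$ --- so the bookkeeping goes through exactly as you describe.
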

\begin{proof}
The idea is to expand the intertwiners from left to right, using \eqref{E:psi} or \eqref{E:psineg} 
depending on how previous choices were made. We first have
\begin{align}\label{E:Xupi}
X^u \pi^Y = X^{u \pi^Y}.
\end{align}
This relation follows from the definition \eqref{E:Xdef} and the commutation
\begin{align}
\label{E:gTcomm}
\pi^Y T^Y_i = T^Y_{\pi^Y(i)} \pi^Y \qquad\text{for $i\in I^Y$}
\end{align}
which holds by Theorem \ref{T:dualitydifferent} and the corresponding defining relation of the DAHA $\H(Y,X)$. By \eqref{E:Xupi} we have
\begin{align*}
 X^u  \pi^Y \psi_{i_1} \psi_{i_2}\dotsm \psi_{i_\ell} &=\
 X^{u \pi^Y} \psi_{i_1} \psi_{i_2}\dotsm \psi_{i_\ell}.
\end{align*}
We expand $\psi_{i_1}$ depending on $\epsilon_1$ (see \eqref{E:signsforpath}).
\begin{enumerate}
\item If $\epsilon_1=1$, by \eqref{E:psi} and \eqref{E:intertwinerY} we have
\begin{align*}
  X^{u\pi^Y} \psi_{i_1} \psi_{i_2}\dotsm \psi_{i_\ell}  &= 
  X^{u\pi^Y} \left(T_{i_1}^Y + \dfrac{c_1 + d_1 Y^{-\alpha_{i_1}}}{1-Y^{-2\alpha_{i_1}}} \right)\psi_{i_2}\dotsm \psi_{i_\ell} \\
  &= X^{u\pi^Y s_{i_1}} \psi_{i_2}\dotsm \psi_{i_\ell} +
  X^{u\pi} \psi_{i_2}\dotsm \psi_{i_\ell} \dfrac{c_1+d_1Y^{-\beta_1}}{1-Y^{-2\beta_1}} \\
\end{align*}
\item If $\epsilon_1 = -1$, by \eqref{E:psineg} and \eqref{E:intertwinerY} we have
\begin{align*}
  X^{u\pi^Y} \psi_{i_1} \psi_{i_2}\dotsm \psi_{i_\ell}  &= 
  X^{u\pi^Y} \left((T_{i_1}^Y)^{-1} + \dfrac{c_1 Y^{-2\alpha_{i_1}}+d_1 Y^{-\alpha_{i_1}}}{1-Y^{-2\alpha_{i_1}}}\right) \psi_{i_2}\dotsm \psi_{i_\ell} \\
  &= X^{u\pi^Y s_{i_1}} \psi_{i_2}\dotsm \psi_{i_\ell} +
  X^{u\pi^Y} \psi_{i_2}\dotsm \psi_{i_\ell} \dfrac{c_1Y^{-2\beta_1}+d_1 Y^{-\beta_1}}{1-Y^{-2\beta_1}} \\
\end{align*}
\end{enumerate}
The Theorem follows by expanding $X^{u\pi^Y s_{i_1}} \psi_{s_{i_2}\dotsm s_{i_\ell}}$
and $X^{u\pi^Y} \psi_{s_{i_2}\dotsm s_{i_\ell}}$ by induction.
\end{proof}

\begin{ex} Let $(X,Y)=(P(A_2),P(A_2))$ and write $v$ for all parameters. 
Let $u=\id$ and $w=\pi^Y_1 s_1 s_0$.
We have $\beta_1=s_0\alpha_1=\alpha_0+\alpha_1 = \delta-\alpha_2$
and $\beta_2 = \alpha_0 = \delta -\alpha_1-\alpha_2$. We first compute by direct expansion
following the proof of Theorem~\ref{T:tauexpDAHA}. We use that $\pi^Y_1 \alpha_1 = \alpha_2$,
$\pi^Y_1 s_1 \alpha_0 = \pi_1 (\alpha_0+\alpha_1)=\alpha_1+\alpha_2$,
$\pi^Y_1 \alpha_0 = \alpha_1$. Thus we use three cases of \eqref{E:psi}.
\begin{align*}
X^{\id} \psi_{\pi^Y_1  s_1s_0}
        &= \pi^Y_1 \psi_1 \psi_0 \\
        &= \pi^Y_1 \left(T_1 + \dfrac{v^{-1}-v}{1-Y^{-\alpha_1}}\right)\psi_0  \\
        &= \pi^Y_1 T_1 \psi_0  + \pi^Y_1 \psi_0 \dfrac{v^{-1}-v}{1-Y^{-s_0(\alpha_1)}}  \\
        &= \pi^Y_1 T_1 \left(T_0^Y + \dfrac{v^{-1}-v}{1-Y^{-\alpha_0}}\right)+
        \pi^Y_1 \left(T_0^Y + \dfrac{v^{-1}-v}{1-Y^{-\alpha_0}}\right) \dfrac{v^{-1}-v}{1-Y^{-s_0(\alpha_1)}} \\
        &= \pi^Y_1 T_1 T_0^Y  + \pi^Y_1 T_1 \dfrac{v^{-1}-v}{1-Y^{-\alpha_0}} \\
        &+ \pi^Y_1 T_0^Y \dfrac{v^{-1}-v}{1-Y^{-s_0(\alpha_1)}}  +
        \pi^Y_1 \dfrac{v^{-1}-v}{1-Y^{-\alpha_0}}\dfrac{v^{-1}-v}{1-Y^{-s_0(\alpha_1)}}  
\end{align*}
We list $(b_1,b_2)$, $(u_1,u_2)$, and the corresponding terms in $X^\id \psi_{s_1s_0}$. We have $u_0=\pi^Y_1$.
In every case it turns out that $\epsilon_1=\epsilon_2=1$.
\begin{itemize}
\item $(1,1)$: $(u_1,u_2)=(\pi^Y_1 s_1, \pi^Y_1 s_1s_0)$ with term $X^{\pi s_1s_0}$.
\item $(1,0)$: $(u_1,u_2)=(\pi^Y_1 s_1, \pi^Y_1 s_1)$ with term
$X^{\pi^Y_1 s_1} (v^{-1}-v)(1-Y^{-\beta_2})^{-1}$.
\item $(0,1)$: $(u_1,u_2)=(\pi^Y_1,\pi^Y_1 s_0)$ with term $X^{\pi^Y_1 s_0} (v^{-1}-v)(1-Y^{-\beta_1})^{-1}$.
\item $(0,0)$: $(u_1,u_2)=(\pi^Y_1,\pi^Y_1)$ with term $X^{\pi^Y_1} (v^{-1}-v) (1-Y^{-\beta_1})^{-1} (v^{-1}-v) (1-Y^{-\beta_2})^{-1}$.
\end{itemize}
\end{ex}

\subsection{Ram-Yip formula, working version} 
\label{SS:RYwork}
Recall that we started with $w, u\in W(\tY)$, which gave rise to $\pi^Y$ and $\wa$. Given a binary word $b=(b_1,\dotsc,b_\ell)$, let $J=J(b)=\{j\mid b_j=0\}$ be the subset of $\{1,2,\dotsc,\ell\}$ of positions where $b$ has \textit{zeroes}. For any subset $J=\{j_1<\dotsm<j_r\}\subset\{1,2,\dotsc,\ell\}$,
define $z_0,\dotsc,z_r\in W(\tY)$ by
\begin{align}\label{E:zdef}
z_0 &= u w \\
z_m &= z_{m-1} s_{\beta_{j_m}}\qquad\text{for $1\le m\le r$.}
\end{align}
By what will eventually be abuse of language, we call this data the alcove path $p_J$. This data is depicted
\begin{align}\label{E:zpath}
z_0 \xrightarrow{\beta_{j_1}} z_1 \xrightarrow{\beta_{j_2}} \dotsm \xrightarrow{\beta_{j_r}} z_r =: \ep(p_J)
\end{align}
Equivalently, $z_0=uw$, and successive $z$'s are obtained by removing the $j_1$-th reflection, then the $j_2$-th, and so on. We have
\begin{align}\label{E:signz}
  \epsilon_{j_m} = 
  \begin{cases} 1 & \text{if $z_m \beta_{j_m} \in \Z \delta^Y + R_+(Y)$} \\
  -1 & \text{if $z_m\beta_{j_m}\in \Z\delta^Y-R_+(Y)$.}
  \end{cases}
\end{align}
This holds because $z_m \beta_{j_m} = u_{j_m-1} \alpha_{i_{j_m}}$. Define the subsets $J^\pm \subset J$ by
\begin{align}\label{E:Jpm}
  J^\pm = \{ j\in J \mid \epsilon_j = \pm 1 \}.
\end{align}
We denote by $\BP(u;w)=\{p_J\mid J\subset \{1,2,\dotsc,\ell\} \}$ (by abuse of notation, since it depends on the factorization of $w$) the set of such sequences $(z_0,z_1,\dotsc,z_r)$ (together with the knowledge of $J$).

\begin{ex}\label{X:D2pathz} Following Example \ref{X:D2path} and its binary word $b$, we have $J=\{1,4\}$, $(z_0,z_1,z_2)=(s_1s_2s_1s_0,s_2s_1s_0,s_2s_1)$, $J^+=\{1,4\}$, and $J^-=\emptyset$.
\end{ex}

In the new notation we have the equality in $\HH$:
\begin{align}
\label{E:RY}
X^u \psi_w &=
\sum_{p_J\in \BP(u;w)}
X^{\ep(p_J)} \prod_{j\in J^+} \dfrac{c_j + d_j Y^{-\beta_j}}{1-Y^{-2\beta_j}}
\prod_{j\in J^-} \dfrac{c_jY^{-2\beta_j} + d_j Y^{-\beta_j}}{1-Y^{-2\beta_j}}.
\end{align}

\subsection{Alcove paths} 
We explain the name ``alcove path" for $p_J$.
This is called an LS-gallery in \cite{GL}.

For $i\in I_0$ let $H_i$ be the hyperplane in 
$X_\R = X \otimes_{\Z} \R$ through the origin, normal to $\alpha_i^X$. Let $H_0$ be the hyperplane through the point $\vartheta/2$, normal to $\vartheta$ (recall \eqref{E:s0Y}). Then $s_i^Y\in W_a(\tY)$ is the reflection across the hyperplane $H_i$ for $i\in I$.
The complement in $X_\R$ of the union of hyperplanes $wH_i$
for $w\in W_a(\tY)$ and $i\in I$, has components called alcoves. Say that a hyperplane $H'$ is a wall of an alcove $A'$ if the boundary of $A'$ contains a nonempty Euclidean-open subset of $H'$. 
The fundamental alcove $A$ is the unique alcove that has $H_i$ as a wall for all $i\in I$. There is a bijection from $W_a(\tY)$ to the set of alcoves in $X_\R$ given by $w\mapsto wA$.
The alcove $wA$ has walls $w H_i$ for $i\in I$.
Given $i\in I^Y$, the alcoves $wA$ and $ws_i A$ are on opposite sides of their one common wall $wH_i=ws_iH_i$.

This describes the alcoves for $W_a(\tY)$.
The group $W(\tY) \cong \Pi^Y \ltimes W_a(\tY)$ acts on the set $\Pi^Y \times X_\R$ by $(\pi' w) (\pi, x) = 
(\pi'\pi, \pi^{-1}w\pi(x))$. This set has a connected component $\pi \times X_\R$ (the $\pi$ sheet) for every $\pi\in\Pi^Y$. The group $\Pi^Y$ permutes the sheets and $W_a(\tY)$ has a natural action on the identity sheet and a $\pi$-twisted action on the $\pi$ sheet. The alcoves of $\Pi^Y \times X_\R$ are by definition the sets $\pi \times w A$ where $\pi\in \Pi^Y$ and $w\in W_a(\tY)$. There is a bijection from $W(\tY)$ to the set of alcoves in $\Pi^Y \times X_\R$ given by $u\mapsto \pi \times wA$ where $u=\pi w$ with $\pi \in\Pi^Y$ and $w\in W_a(\tY)$. We will write $uA$ instead of $\pi \times wA$. Then for all $u\in W(\tY)$ and
$i\in I$, $uA$ and $us_iA$ are on opposite sides of their unique common wall $uH_i=us_iH_i$.

Given the data $u, w\in W(\tY)$
let $w=\pi^Y s_{i_1}\dotsm s_{i_\ell}$ be reduced
with $\pi^Y\in \Daut^Y$ and $\wa=(i_1,\dotsc,i_\ell)$ a sequence in $I^Y$.
Given a binary word $b=(b_1,\dotsc,b_\ell)$,
let $(u_0,u_1,\dotsc,u_\ell)$ be as in \eqref{E:u0} and \eqref{E:udef}. This gives rise to a sequence of alcoves
$(u_0A,u_1A,\dotsc,u_\ell A)$ in the $\pi^Y$ sheet. There is a piecewise-linear continuous parametrized path in the sheet $\pi^Y \times X_\R$ defined informally as follows. The path starts in the center of alcove $u_0A$. For $1\le k\le \ell$, the $k$-th part of the path starts at the center of the alcove $u_{k-1}A$. It follows a line segment towards the 
the center of the alcove $u_{k-1} s_{i_k}A$ until it touches the wall $u_{k-1} H_{i_k}$.
If $b_k=0$ (so that $u_k =u_{k-1}$)
the path bounces back and traces back to the center of $u_k A=u_{k-1}A$. If $b_k=1$ then the path continues transversely across the wall, to the center of the alcove $u_k A$.

The sign $\epsilon_k$ indicates whether the
$k$-th part of the path starts in the positive or negative
orientation with respect to the wall it approaches.
If $b_k=0$, the $k$-th step is called a {\em fold} and if $b_k=1$ it is called a crossing.

The alcove path $p_J$ can also be constructed by ``folding" as follows. Begin with the path $p_\emptyset$. It consists of directed line segments connecting the centers of the alcoves $(u\pi^YA,u\pi^Y s_{i_1}A,u\pi^Y s_{i_1}s_{i_2}A,\dotsc,uwA)$.

To form $p_{\{j_1\}}$, consider the hyperplane crossed by the $j_1$-th line segment of $p_{\emptyset}$. Reflect the part of $p_{\emptyset}$ after the hyperplane, across the hyperplane. This reflection is $s_{\beta_1}$. To form $p_{\{j_1,j_2\}}$, consider the hyperplane crossed by the $j_2$-th segment of $p_{\{1\}}$. Reflect the part of the path $p_{\{j_1\}}$ after the hyperplane, across it (using $s_{\beta_2}$). Continue until $p_J$ is constructed.

\begin{ex} \label{X:alcove} 
Following Example \ref{X:D2pathz}, let
$X_\R = \R e_1 \oplus \R e_2$, $\alpha_1,\alpha_2$ as in Example \ref{X:D2root}, and $\vartheta=e_1$. $H_1$ is the line $x=y$, $H_2$ is the line $y=0$, and $H_0$ is the line $x=1/2$.
The fundamental alcove $A$ for $W(\tY)$ is the open triangle with vertices $(0,0)$, $(1/2,0)$, and $(1/2,1/2)$. 
If an alcove $xA$ appears in a picture for some $x\in W(\tY)$, we draw $i$ on each wall $xH_i$ of the alcove $xA$.

The alcove walk $p_{\{1,4\}}$ is constructed by folding.
We start with $p_{\emptyset}$; see Figure~1.
\begin{figure}
\begin{pspicture}(-2,1)(2,5)
\psset{unit=2cm}
\rput(0,.8){(0,0)}
\rput(1.4,2){(1/2,1/2)}
\rput(.85,1.2){id}
\rput(.5,1.8){$s_1$}
\rput(-.5,1.8){$s_1s_2$}
\rput(-.65,1.15){$s_1s_2s_1$}
\rput(-1.4,1.15){$s_1s_2s_1s_0$}
\psnode(0,1){o}{$\bullet$}
\psnode(1,2){a}{}
\psnode(1,1){b}{}
\psnode(0,2){c}{}
\psnode(-1,2){d}{}
\psnode(-1,1){e}{}
\ncline{o}{a}
\ncput[npos=.3]{1}
\ncline{o}{b}
\ncput{2}
\ncline{a}{b}
\ncput{0}
\ncline{o}{c}
\ncput{2}
\ncline{o}{d}
\ncput[npos=.3]{1}
\ncline{o}{e}
\ncput{2}
\ncline{a}{c}
\ncput{0}
\ncline{c}{d}
\ncput{0}
\ncline{d}{e}
\ncput{0}
\psnode(-2,1){z}{}
\ncline{e}{z}
\ncput{2}
\ncline{d}{z}
\ncput{1}
\psnode(.7,1.3){id}{}
\psline[linecolor=red]{->}(.75,1.25)(.25,1.75)
\psline[linecolor=red]{->}(.25,1.75)(-.25,1.75)
\psline[linecolor=red]{->}(-.25,1.75)(-.75,1.25)
\psline[linecolor=red]{->}(-.75,1.25)(-1.25,1.25)
\end{pspicture}
\caption{Alcove path $p_\emptyset$}
\label{F:ap}
\end{figure}
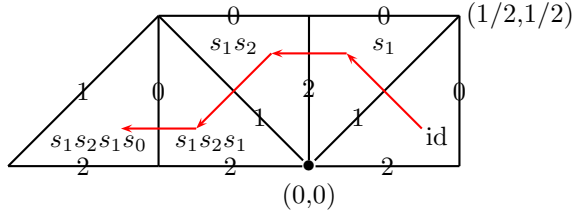
We add a fold in the $j_1=1$-th segment. This reflects
the tail of the path $p_{\emptyset}$ across the line $x=y$, producing the alcove path $p_{\{1\}}$, pictured in Figure \ref{F:ap1}. 
\begin{figure}
\begin{pspicture}(0,-2)(2,4.5)
\psset{unit=2cm}
\rput(-.3,1){(0,0)}
\rput(1.4,2){(1/2,1/2)}
\rput(.85,1.2){id}
\rput(.85,.8){$s_2$}
\rput(.55,.2){$s_2s_1$}
\rput(.3,-.37){$s_2s_1s_0$}
\psnode(0,1){o}{$\bullet$}
\psnode(1,2){a}{}
\psnode(1,1){b}{}
\psnode(1,0){c}{}
\psnode(0,0){d}{}
\ncline{o}{a}
\ncput[npos=.3]{1}
\ncline{o}{b}
\ncput[npos=.3]{2}
\ncline{o}{c}
\ncput[npos=.3]{1}
\ncline{o}{d}
\ncline{d}{c}
\ncput{0}
\ncline{a}{b}
\ncput{0}
\ncline{b}{c}
\ncput{0}
\ncline{d}{o}
\ncput{2}

\psline(0,1)(1,1)
\psnode(.7,1.3){id}{}
\psnode(.7,.7){s2}{}
\psnode(.3,.3){s21}{}
\psnode(.5,1.5){h}{}
\psline[linecolor=red]{->}(.75,1.25)(.5,1.5)
\psline[linecolor=red]{->}(.45,1.45)(.70,1.2)
\psline[linecolor=red]{->}(.70,1.2)(.7,.7)
\psline[linecolor=red]{->}(.7,.7)(.3,.3)
\psline[linecolor=red]{->}(.3,.3)(.3,-.3)
\psnode(0,-1){low}{}
\ncline{d}{low}
\ncput{2}
\ncline{low}{c}
\ncput[npos=.3]{1}
\end{pspicture}
\caption{Alcove path $p_{\{1\}}$}
\label{F:ap1}
\end{figure}
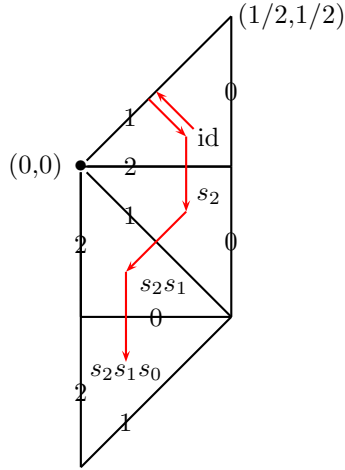
Finally we add a fold to the $j_2=4$-th segment. This reflects the tail of $p_{\{1\}}$ across the line $y=-1/2$, resulting in $p_{\{1,4\}}$, pictured in Figure \ref{F:ap14}. This is the alcove path
of Example \ref{X:D2pathz}. 

\begin{figure}
\begin{pspicture}(0,0)(2,4.5)
\psset{unit=2cm}
\rput(-.3,1){(0,0)}
\rput(1.4,2){(1/2,1/2)}
\rput(.85,1.2){id}
\rput(.85,.8){$s_2$}
\rput(.55,.2){$s_2s_1$}
\psnode(0,1){o}{$\bullet$}
\psnode(1,2){a}{}
\psnode(1,1){b}{}
\psnode(1,0){c}{}
\psnode(0,0){d}{}
\ncline{o}{a}
\ncput[npos=.3]{1}
\ncline{o}{b}
\ncput[npos=.3]{2}
\ncline{o}{c}
\ncput[npos=.3]{1}
\ncline{o}{d}
\ncline{d}{c}
\ncput{0}
\ncline{a}{b}
\ncput{0}
\ncline{b}{c}
\ncput{0}
\ncline{d}{o}
\ncput{2}

\psline(0,1)(1,1)
\psnode(.7,1.3){id}{}
\psnode(.7,.7){s2}{}
\psnode(.3,.3){s21}{}
\psnode(.5,1.5){h}{}
\psline[linecolor=red]{->}(.75,1.25)(.5,1.5)
\psline[linecolor=red]{->}(.45,1.45)(.70,1.2)
\psline[linecolor=red]{->}(.70,1.2)(.7,.7)
\psline[linecolor=red]{->}(.7,.7)(.3,.3)
\psline[linecolor=red]{->}(.3,.3)(.3,0)
\psline[linecolor=red]{->}(.25,0)(.25,.3)
\end{pspicture}
\caption{Alcove path $p_{\{1,4\}}$}
\label{F:ap14}
\end{figure}
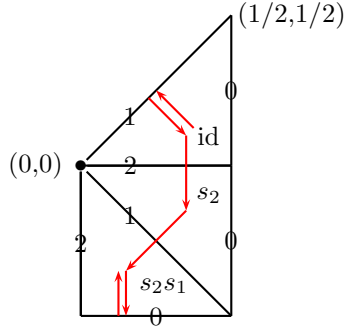
\end{ex}

It is convenient to organize the alcove paths into a tree in which each vertex is an alcove path $p_J$ for a subset $J\subset\{1,2,\dotsc,\ell\}$. The root of the tree is $p_\emptyset$. For $\emptyset\ne J\subset \{1,2,\dotsc,\ell\}$ the parent of $p_J$ is $p_{\hJ}$ where $\hJ$ is obtained from $J$ by removing its largest element $M=\max(J)$ (that is, its last fold) and the directed edge from $\hJ$ to $J$ is labeled $\beta_M$. 
See Example \ref{X:D32red} for an example of a subtree of this tree.

\subsection{Ram-Yip formula for nonsymmetric Macdonald-Koornwinder polynomials}
The following theorem is the not-necessarily-reduced analogue of \cite[Thm. 3.1]{RY}. It is obtained by applying the formula \eqref{E:RY} to the generator of the polynomial module.

\begin{thm} \label{T:TMac} Let $u\in W(\tY)$ and
$\la\in X$. Let $w=m_\la = \pi^Y s_{i_1}\dotsm s_{i_\ell}\in W(\tilde{Y})$ be reduced with $\wa=(i_1,\dotsc,i_\ell)$. Let $\beta_j$ be as in \eqref{E:betadef}, $J^\pm$ as in \eqref{E:Jpm}, and $c_j$ and $d_j$ as in \eqref{E:firstscalar} and \eqref{E:secondscalar}. Finally, with $\xi_j$ defined by $Y^{-\beta_j} \bo = \xi_j \bo$, we have
\begin{align}\label{E:RYE}
  X^u (\tE_\la) &= \sum_{p_J\in \BP(u; m_\la)} X^{\wt(p_J)} v_{\dir{p_J}}
 \prod_{j\in J^+} \dfrac{c_j + d_j \xi_j}{1-\xi_j^2}
 \prod_{j\in J^-} \dfrac{c_j\xi_j^2  + d_j \xi_j}{1-\xi_j^2}
\end{align}
where $\wt(p_J)=\wt(\ep(p_J))$ and $\dir{p_J}=\dir{\ep(p_J)}$. 
\end{thm}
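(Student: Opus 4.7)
The statement is obtained by applying the operator identity \eqref{E:RY} from Theorem~\ref{T:tauexpDAHA}, specialized to $w=m_\la$, to the generator $\bo\in\Pol$ of the polynomial module, and then simplifying each side.

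On the left, by the definition \eqref{E:tEdef} of $\tE_\la$ we have $\psi_{m_\la}\bo=\tE_\la$, so the left hand side of \eqref{E:RY} applied to $\bo$ is
\begin{align*}
  X^u\psi_{m_\la}\bo=X^u\tE_\la,
\end{align*}
matching the left hand side of \eqref{E:RYE}.

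On the right, each summand of \eqref{E:RY} applied to $\bo$ has the form $X^{\ep(p_J)}F_J(Y)\bo$, where $F_J(Y)$ is the product of rational expressions in the commuting operators $Y^{-\beta_j}$ for $j\in J^\pm$. Since $\bo=E_0$ is a simultaneous eigenvector for the family $\{Y^\mu\mid\mu\in\tY\}$ by Proposition~\ref{P:eigen}, and $\xi_j$ is by definition the eigenvalue of $Y^{-\beta_j}$ on $\bo$, each factor of $F_J(Y)$ acts on $\bo$ by the corresponding scalar. Hence
\begin{align*}
  F_J(Y)\bo=\left(\prod_{j\in J^+}\frac{c_j+d_j\xi_j}{1-\xi_j^2}\prod_{j\in J^-}\frac{c_j\xi_j^2+d_j\xi_j}{1-\xi_j^2}\right)\bo,
\end{align*}
which reproduces exactly the product of fractions in \eqref{E:RYE}. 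Finally, Lemma~\ref{L:Xv} gives $X^{\ep(p_J)}\bo=v_{\dir{\ep(p_J)}}X^{\wt(\ep(p_J))}=v_{\dir{p_J}}X^{\wt(p_J)}$. Assembling these three observations yields the asserted identity.

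The only point requiring care is the substitution $Y^{-\beta_j}\mapsto\xi_j$ inside rational expressions with denominators $1-Y^{-2\beta_j}$. I would handle this by working over the field of fractions in the Hecke parameters $v_\alpha$ and in $q^{1/m}$, where generically $\xi_j^2\ne1$ so the denominators are invertible; the resulting identity is one of rational functions in the parameters and then holds in the stated form. The substantive work has already been done in Theorem~\ref{T:tauexpDAHA} and Lemma~\ref{L:Xv}; the present theorem is essentially a direct evaluation of the operator identity on the generating vector $\bo$ of $\Pol$.
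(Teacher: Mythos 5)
Your proposal is correct and follows exactly the paper's route: the paper derives Theorem~\ref{T:TMac} by applying the operator identity \eqref{E:RY} to the generator $\bo$ of the polynomial module, using $\tE_\la=\psi_{m_\la}\bo$, the eigenvector property of $\bo=E_0$ to replace each $Y^{-\beta_j}$ by the scalar $\xi_j$, and Lemma~\ref{L:Xv} for $X^{\ep(p_J)}\bo=v_{\dir{p_J}}X^{\wt(p_J)}$. Your write-up simply makes explicit the details the paper leaves to the reader.
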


Taking $u=\id$ yields the Ram-Yip formula for $\tE_\la$.

\begin{rem}\label{R:Yeigenvalue} By  Proposition~\ref{P:eigen}: $\xi_j = \chi_{-\beta_j,0}$. In the equal-parameter case, by \eqref{E:eigenequalparam}, $\xi_j = q^{\deg(\beta_j)} v^{\pair{2\rho^{\vee Y}}{-\bbeta_j}}$ where $\deg$ is defined in \eqref{E:deg} and $\bbeta_j$ is the projection of $\beta_j$ to the classical weight lattice.
\end{rem}

\begin{rem}\label{R:RYreduced} If $v_{2\alpha_i}=v_{\alpha_i}$ for all $i\in I$, then $c_j=d_j$ and \eqref{E:RYE} simplifies to
\begin{align}\label{E:RYreduced}
  X^u(\tE_\la) &=
  \sum_{p_J\in \BP(u;m_\la)}
  X^{\wt(p_J)} v_{\dir{p_J}} \prod_{j\in J} \dfrac{v^{-1}_{\alpha_{i_j}}- v_{\alpha_{i_j}}}{1-\xi_j}
  \prod_{j\in J^-} \xi_j
\end{align}
If all $v_{\alpha_i}$ and $v_{2\alpha_i}$ are set to an invertible element $v\in \K$ (the equal-parameters specialization) then \eqref{E:RYE} further simplifies to
\begin{align}\label{E:RYequalparam}
  X^u(\tE_\la) &=
  \sum_{p_J\in \BP(u;m_\la)}
  X^{\wt(p_J)} v^{\ell(\dir{p_J})} (v^{-1}-v)^{|J|} \prod_{j\in J} (1-\xi_j)^{-1} \prod_{j\in J^-} \xi_j
\end{align}
with $\xi_j$ as in Remark \ref{R:Yeigenvalue}.
\end{rem}

\begin{rem}\label{R:RYerratum}
Theorem 3.1 in \cite{RY} is stated only for $\la\in Q^X$, the root lattice of $X$. We also point out an erratum in \cite{RY} related to the formula $(T_0^Y)^{-1}=X^{\vartheta}T_{s_\vartheta}$, where $\vartheta$ is the dominant {\em short} root in $X$. 
$T_0^Y$ is denoted $T_0^\vee$ in \cite[(2.27)]{RY} and the formula given there uses the highest root of $X$, which is equal to the correct root $\vartheta$ if and only if $X$ is simply-laced.
\end{rem}

\begin{rem}\label{R:tlainversions}
 It is well-known that if $w\in W(\tilde{Y})$ is Grassmannian, then $\Inv(w) \subset \Z_{>0}\delta^Y-R_+(Y)$. In particular, the inversions $\beta_j$ of $w=m_\la$ satisfy
\begin{align}
\label{E:GrassInv}
\text{$\be_j\in\Z_{>0}\de^Y-R_+(Y)$ for $j=1,\ldots,\ell$,}
\end{align}
and each $\xi_j$ is a positive power of $q$ times a nonempty product of positive powers of Hecke parameters.
\end{rem}

\subsection{Mixed type}
\label{SS:mixedRY}
We present the Ram-Yip formula for nonsymmetric Macdonald polynomials of type $A_{2n}^{(2)}$ (which has $\alpha_0$ as the extra short root) and its affine dual $A_{2n}^{(2)\dagger}$. Following \cite[\S 6.7]{Hai}, these are obtained from the nonsymmetric Koornwinder polynomials $E_\la^K$ by specialization of Hecke parameters.

Let $\HH_K$ (K is for Koornwinder) be the DAHA with double affine datum $(X,Y)=(Q(B_n),Q(B_n))$. It has $\tX$ and $\tY$ both of type $D_{n+1}^{(2)}$ and $S^X=\{0,n\}$.
There are five independent Hecke parameters: $v_s = v_{\alpha_n^X}$ the short root parameter, $v_l=v_{\alpha_i^X}$ for $1\le i\le n-1$ the long root parameter, $v_0=v_{\alpha_0^X}$ the $\alpha_0$ parameter, $v_2 = v_{2\alpha_n^X}$ the $2\alpha_n$ parameter, and $v_z=v_{2\alpha_0^X}$ the $2\alpha_0$ parameter. 
According to \S \ref{SS:dualparameters} we have
$v_{\alpha_i^Y}=v_l$ for $1\le i\le n-1$,
$v_{\alpha_n^Y}=v_s$, $v_{\alpha_0^Y}=v_2$,
$v_{2\alpha_n^Y} = v_0$, and $v_{2\alpha_0^Y}=v_z$.

For $\la\in X=Q(B_n)$ let $E_\la^K(X;q;v_s,v_l,v_0,v_2,v_z)$ denote the nonsymmetric
Koornwinder polynomials, which are the ``nonsymmetric Macdonald polynomials" for $\HH_K$. The dual intertwiners for $\HH_K$ are given below, where roots are of type $\tY=D_{n+1}^{(2)}$ and $1\le i\le n-1$. Using \eqref{E:psi} and \eqref{E:psineg} for the intertwiners and 
\eqref{E:dualparameters} through \eqref{E:enddualparameters}
for the dual parameters, we obtain
\begin{align}
\label{E:K0+}
\psi^K_0 &= T_0^Y + \dfrac{(v_2^{-1}-v_2)+(v_z^{-1}-v_z)Y^{-\alpha_0^Y}}
{1-Y^{-2\alpha_0^Y}} \\
\label{E:K0-}
&=(T_0^Y)^{-1} + \dfrac{(v_2^{-1}-v_2)Y^{-2\alpha_0^Y}+(v_z^{-1}-v_z)Y^{-\alpha_0^Y}}
{1-Y^{-2\alpha_0^Y}} \\
\label{E:Kl+}
\psi_i^K &= T_i^Y + \dfrac{v_l^{-1}-v_l}{1-Y^{-\alpha_i^Y}} \\
\label{E:Kl-}
&= (T_i^Y)^{-1} + \dfrac{(v_l^{-1}-v_l)Y^{-\alpha_i^Y}}{1-Y^{-\alpha_i^Y}} \\
\label{E:Kn+}
\psi^K_n &= T_n^Y + \dfrac{(v_s^{-1}-v_s)+(v_0^{-1}-v_0)Y^{-\alpha_n^Y}}
{1-Y^{-2\alpha_n^Y}} \\
\label{E:Kn-}
&= (T_n^Y)^{-1} + \dfrac{(v_s^{-1}-v_s)Y^{-2\alpha_n^Y}+(v_0^{-1}-v_0)Y^{-\alpha_n^Y}}
{1-Y^{-2\alpha_n^Y}}.
\end{align}

\subsubsection{$A_{2n}^{(2)}$}
\label{SSS:a2n2}
The DAHA $\HH({A_{2n}^{(2)}})$ of type $A_{2n}^{(2)}$ is 
by definition the image of the Koornwinder DAHA $\HH_K$ (having double affine datum $(Q(B_n),Q(B_n))$) under the specialization $v_{2\alpha_0^X}\mapsto v_{\alpha_0^X}$
and $v_{2\alpha_n^X}\mapsto 1$, that is, $v_z\mapsto v_0$ and $v_2\mapsto 1$. The substitution $v_2=1$ has the effect of getting rid of the $W(\tX)$-orbit of the root $\alpha_n$
and $v_z=v_0$ gets rid of the $W(\tX)$-orbit of the root  $2\alpha_0$, leaving the real roots of type $A_{2n}^{(2)}$.

Let $\la\in P(C_n)$. We regard $\la$ as an element of $Q(B_n)$ via the isomorphism
$P(C_n)\cong Q(B_n)$ given by $\omega_i^{C_n}\mapsto \omega_i^{B_n}$ for $1\le i\le n-1$ and $\omega_n^{C_n}\mapsto 2\omega_n^{B_n}$.
The definition of the nonsymmetric Macdonald polynomial 
of type $A_{2n}^{(2)}$ is 
\begin{align}
  E_\la^{A_{2n}^{(2)}}(X;q;v_s,v_l,v_0) =
  E_\la^K(X;q;v_s,v_l,v_0,v_2=1,v_z=v_0).
\end{align}
Similar definitions are made for $\tE$.

In the equal parameters case we further set $v_s=v$, $v_l=v$, and $v_0=v$.
After these specializations, using \eqref{E:K0+} through \eqref{E:Kn-} we have
\begin{align}
\label{E:psi0a2}
\psi_0^K&\mapsto T_0^Y + \dfrac{(v^{-1}-v) Y^{-\alpha_0^Y}}{1-Y^{-2\alpha_0^Y}} =(T_0^Y)^{-1} + \dfrac{(v^{-1}-v)Y^{-\alpha_0^Y}}{1-Y^{-2\alpha_0^Y}} \\
\label{E:psia2}
\psi_i^K&\mapsto T_i^Y + \dfrac{v^{-1}-v}{1-Y^{-\alpha_i^Y}} =
(T_i^Y)^{-1} + \dfrac{(v^{-1}-v)Y^{-\alpha_i^Y}}{1-Y^{-\alpha_i^Y}} 
\qquad\text{for $i \in I_0$}.
\end{align}
where the roots are of type $\tY=D_{n+1}^{(2)}$.

\begin{prop}
\label{P:RYA2n2}
Let $u\in W(\tY)$ and $\la\in X=Q(B_n)$.
In $\tY=X\rtimes W_0$, let $m_\la = s_{i_1}\dotsm s_{i_\ell}$ be reduced, $\wa=(i_1,\dotsc,i_\ell)$ and $\beta_j$ as before.
For $J\subset \{1,\dotsc,\ell\}$ define
$J^0 = \{j\in J\mid i_j=0\}$ and let
$J^\pm$ be the set of $j\in J\setminus J^0$ where the fold at $j$ has sign $\pm 1$. We have
\begin{align}
\label{E:nsmacA2n2ep}
  X^u E_\la^{A_{2n}^{(2)}}(X;q;v) &=
  \sum_{p_J\in \BP(u;m_\la)}
  X^{\wt(p_J)} v^{-\ell(u_\la^{-1})+\ell(\dir{p_J})-|J|}(1-v^2)^{|J|} \\ \notag
  &\,\,\quad
  \prod_{j\in J^0}\dfrac{\xi_j}{1-\xi_j^2}\prod_{j\in J^+} \dfrac{1}{1-\xi_j} \prod_{j\in J^-} \dfrac{\xi_j}{1-\xi_j} 
\end{align}
where $2\rho^{\vee Y}$ is the sum of positive coroots of type $B_n$ and $\xi_j=q^{\deg(\beta_j)} v^{\pair{2\rho^{\vee Y}}{-\bbeta_j}}$.
\end{prop}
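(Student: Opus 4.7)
The plan is to derive \eqref{E:nsmacA2n2ep} by specializing the Ram-Yip formula \eqref{E:RYE} for the Koornwinder DAHA $\HH_K$: first via the $A_{2n}^{(2)}$ substitutions $v_2\mapsto 1$, $v_z\mapsto v_0$, then via the equal-parameter substitution $v_s=v_l=v_0=v$, and finally applying the normalization $E_\la=v_{u_\la^{-1}}^{-1}\tE_\la$. All pieces of \eqref{E:RYE} are essentially monomial in the parameters, so what needs to be tracked are the scalars $c_j, d_j$ of \eqref{E:firstscalar}--\eqref{E:secondscalar}, the weight $v_{\dir{p_J}}$, and the overall normalization.

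First I would compute the specialized $(c_j,d_j)$ case by case in $i_j\in I^Y$, using the dictionary \eqref{E:dualparameters}--\eqref{E:enddualparameters}. For $\HH_K$ one has $v_{\alpha_0^Y}=v_2$, $v_{2\alpha_0^Y}=v_z$, $v_{\alpha_n^Y}=v_s$, $v_{2\alpha_n^Y}=v_0$, and $v_{\alpha_i^Y}=v_l$ for $1\le i\le n-1$ (with the convention $v_{2\alpha_i^Y}=v_{\alpha_i^Y}$ for $i\in I^Y\setminus S^Y$). After both specializations, $v_{\alpha_0^Y}=1$ and $v_{2\alpha_0^Y}=v$, while all other $v_{\alpha_i^Y}=v_{2\alpha_i^Y}=v$. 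Hence $c_j=d_j=v^{-1}-v$ when $i_j\ne 0$, whereas $c_j=0$ and $d_j=v^{-1}-v$ when $i_j=0$. In the same step, equal parameters reduce $v_{\dir{p_J}}$ and $v_{u_\la^{-1}}$ to $v^{\ell(\dir{p_J})}$ and $v^{\ell(u_\la^{-1})}$ by \eqref{E:vw}.

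Next I would collect the factors. For $j\in J\setminus J^0$, the $J^+$ numerator $c_j+d_j\xi_j=(v^{-1}-v)(1+\xi_j)$ and the $J^-$ numerator $c_j\xi_j^2+d_j\xi_j=(v^{-1}-v)\xi_j(1+\xi_j)$ each cancel one factor of $1-\xi_j^2$, yielding $(v^{-1}-v)/(1-\xi_j)$ and $(v^{-1}-v)\xi_j/(1-\xi_j)$ respectively. For $j\in J^0$, the vanishing $c_j=0$ collapses both the $J^+$ and $J^-$ expressions to the common value $(v^{-1}-v)\xi_j/(1-\xi_j^2)$; this is exactly the phenomenon that allows $J^0$ to be merged into a single sign-independent factor. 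Factoring out one copy of $v^{-1}-v$ per element of $J$ and using $(v^{-1}-v)^{|J|}=v^{-|J|}(1-v^2)^{|J|}$ combines with the weight $v^{\ell(\dir{p_J})}$ and the normalization $v^{-\ell(u_\la^{-1})}$ to produce the prefactor $v^{-\ell(u_\la^{-1})+\ell(\dir{p_J})-|J|}(1-v^2)^{|J|}$ in \eqref{E:nsmacA2n2ep}.

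The assertion about $\xi_j$ is then just Remark \ref{R:Yeigenvalue}: since $Y=Q(B_n)$ is of finite type $B_n$, the equal-parameter eigenvalue \eqref{E:eigenequalparam} evaluated on $Y^{-\beta_j}\bo$ gives $\xi_j=q^{\deg(\beta_j)}v^{\pair{2\rho^{\vee Y}}{-\bbeta_j}}$. I do not expect a serious obstacle; the proof amounts to the case analysis above. The only subtle point to highlight in writing is the sign-merging on $J^0$, which is a direct consequence of $c_j=0$ at $i_j=0$ and ultimately reflects, via Remark \ref{R:mixedtype}, the removal of the $W(\tX)$-orbit of $\alpha_n^X$ by $v_2\mapsto 1$ (dually, of $\alpha_0^Y$) in passing from Koornwinder to $A_{2n}^{(2)}$.
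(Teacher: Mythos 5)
Your proposal is correct and follows essentially the same route the paper takes: the paper gives no separate proof of Proposition \ref{P:RYA2n2}, but the preceding discussion in \S\ref{SSS:a2n2} (the specializations $v_2\mapsto 1$, $v_z\mapsto v_0$, then equal parameters, applied to the Koornwinder intertwiners \eqref{E:K0+}--\eqref{E:Kn-} to obtain \eqref{E:psi0a2}--\eqref{E:psia2}) is exactly your specialization of \eqref{E:RYE}, with $c_j=0$ at $i_j=0$ accounting for the sign-independent $J^0$ factor. Your bookkeeping of the prefactor and of $\xi_j$ via Remark \ref{R:Yeigenvalue} is also correct.
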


\subsubsection{$A_{2n}^{(2)\dagger}$}
\label{SSS:a2n2dag}
The DAHA $\HH({A_{2n}^{(2)\dagger}})$ of type $A_{2n}^{(2)\dagger}$ is  
by definition the image of $\HH_K$ under the specialization $v_{2\alpha_0^X}\mapsto 1$
and $v_{2\alpha_n^X}\mapsto v_{\alpha_n^X}$, that is, $v_z\mapsto 1$ and $v_2\mapsto v_s$.

The classical subsystem has type $B_n$.
Let $\la\in Q(B_n)$. The definition of the nonsymmetric Macdonald polynomial of type $A_{2n}^{(2)\dagger}$ is 
\begin{align}
  E_\la^{A_{2n}^{(2)\dag}}(X;q;v_s,v_l,v_0) =
  E_\la^K(X;q;v_s,v_l,v_0,v_2=v_s,v_z=1).
\end{align}
Similar definitions are made for $\tE$.
The substitution $v_z=1$ has the effect of getting rid of the $W(\tX)$-orbit of the root $\alpha_0$
and $v_z=v_0$ gets rid of the $W(\tX)$-orbit of $2\alpha_n$, leaving the system of real roots of $A_{2n}^{(2)\dagger}$.

Further setting Hecke parameters all equal to $v$, from \eqref{E:K0+} through \eqref{E:Kn-} we have
\eqref{E:psia2} for $i\in I_0$ and
\begin{align}
\label{E:psi0a2dag}
\psi_0^K&\mapsto T_0^Y + \dfrac{(v^{-1}-v) }{1-Y^{-2\alpha_0^Y}} \\
&= (T_0^Y)^{-1} + \dfrac{(v^{-1}-v)Y^{-2\alpha_0^Y}}{1-Y^{-2\alpha_0^Y}}.
\end{align}

\begin{prop}
\label{P:RYA2n2dag}
Let $u\in W(\tY)$ and $\la\in X=Q(B_n)$. With notation
as in Proposition \ref{P:RYA2n2}, let $J^{0\pm}$ be the subset of $J^0$ consisting of folds with sign $\pm1$. Then
\begin{align}
\label{E:nsmacA2n2dagep}
  X^u E_\la^{A_{2n}^{(2)\dagger}}(X;q;v) &=
  \sum_{p_J\in \BP(u;m_\la)}
  X^{\wt(p_J)} v^{-\ell(u_\la^{-1})+\ell(\dir{p_J})-|J|}(1-v^2)^{|J|} \\ \notag
  &\,\,\quad
  \prod_{j\in J^{0+}} \dfrac{1}{1-\xi_j^2}
  \prod_{j\in J^{0-}}\dfrac{\xi_j^2}{1-\xi_j^2}\prod_{j\in J^+} \dfrac{1}{1-\xi_j} \prod_{j\in J^-} \dfrac{\xi_j}{1-\xi_j}
\end{align}
\end{prop}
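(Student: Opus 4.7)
The plan is to specialize the general not-necessarily-reduced Ram-Yip formula (Theorem~\ref{T:TMac}) to the Koornwinder DAHA $\HH_K$ with the substitutions $v_z \mapsto 1$ and $v_2 \mapsto v_s$ that define the $A_{2n}^{(2)\dagger}$ root system, and then further specialize all remaining Hecke parameters to a single $v$. Starting from \eqref{E:RYE} and the defining relation $E_\la = v_{u_\la^{-1}}^{-1}\tE_\la = v^{-\ell(u_\la^{-1})}\tE_\la$, we have the overall prefactor $v^{-\ell(u_\la^{-1})}$ and, since $v_{\dir{p_J}}$ specializes to $v^{\ell(\dir{p_J})}$ in the equal-parameter limit, the monomial prefactor becomes $v^{-\ell(u_\la^{-1})+\ell(\dir{p_J})}$ as in the statement.

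Next I would compute the scalars $c_j,d_j$ attached to each fold according to the label $i_j\in I^Y$. For $i_j\in I_0\setminus\{n\}$ we have $c_j=d_j=v_l^{-1}-v_l$, and for $i_j=n$ we have $c_j=v_s^{-1}-v_s$, $d_j=v_0^{-1}-v_0$; in the full equal-parameter limit all these equal $v^{-1}-v$. For $i_j=0$ the dagger substitution gives $c_j = v_s^{-1}-v_s$ and $d_j=0$, and then equal parameters make $c_j=v^{-1}-v$, $d_j=0$. Substituting into the rational factors of \eqref{E:RYE} and using $1-\xi_j^2=(1-\xi_j)(1+\xi_j)$, for $j\in J\setminus J^0$ the numerator factors as $(v^{-1}-v)(1+\xi_j)$ in $J^+$ and $(v^{-1}-v)\xi_j(1+\xi_j)$ in $J^-$, so the factor $1+\xi_j$ cancels and we obtain $(v^{-1}-v)/(1-\xi_j)$ and $(v^{-1}-v)\xi_j/(1-\xi_j)$ respectively. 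For $j\in J^0$, the vanishing $d_j=0$ means no such cancellation is available and we get $(v^{-1}-v)/(1-\xi_j^2)$ for $j\in J^{0+}$ and $(v^{-1}-v)\xi_j^2/(1-\xi_j^2)$ for $j\in J^{0-}$, matching the four types of product appearing in \eqref{E:nsmacA2n2dagep}.

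Finally, I would extract the common factor: each $j\in J$ contributes one $(v^{-1}-v)=v^{-1}(1-v^2)$, so the total prefactor from the fold scalars is $(v^{-1}-v)^{|J|}=v^{-|J|}(1-v^2)^{|J|}$, combining with the monomial prefactor above to give $v^{-\ell(u_\la^{-1})+\ell(\dir{p_J})-|J|}(1-v^2)^{|J|}$, exactly as in \eqref{E:nsmacA2n2dagep}. I would also note that the interpretation of $\xi_j$ as $q^{\deg(\beta_j)}v^{\pair{2\rho^{\vee Y}}{-\bbeta_j}}$ follows from Remark~\ref{R:Yeigenvalue}, and that the orbit-by-orbit case analysis uses the identification of $W(\tY)$-orbits of $\alpha_{i_j}^Y$ on $R(\tY)$ provided in \S\ref{SS:mixedRY}.

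There is no real obstacle: the entire proof is a direct specialization of Theorem~\ref{T:TMac}, mirroring the argument for Proposition~\ref{P:RYA2n2}. The only subtlety is bookkeeping the four different cases of folds ($J^{0\pm}$ and $J^\pm$) and recognizing that the cancellation $1-\xi_j^2=(1-\xi_j)(1+\xi_j)$ applies only when $c_j=d_j$, which distinguishes the non-affine-node folds from the affine-node ones where $d_j=0$ causes $(1-\xi_j^2)$ to remain in the denominator. Once these cases are organized and the $v^{-|J|}$ factor is cleanly separated, the formula drops out immediately.
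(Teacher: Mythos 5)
Your proposal is correct and follows exactly the route the paper intends: Proposition~\ref{P:RYA2n2dag} is a direct specialization of Theorem~\ref{T:TMac} for the Koornwinder DAHA under $v_z\mapsto 1$, $v_2\mapsto v_s$ and then equal parameters, with the four fold types arising precisely because $d_j=0$ at the affine node blocks the cancellation $1-\xi_j^2=(1-\xi_j)(1+\xi_j)$ (this is the content of the specialized intertwiner \eqref{E:psi0a2dag}), and the bookkeeping of the prefactor $v^{-\ell(u_\la^{-1})+\ell(\dir{p_J})-|J|}(1-v^2)^{|J|}$ is as you describe.
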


\subsection{Ram-Yip for symmetric Macdonald-Koornwinders}
Let $\la\in X_+$, $(W_0)_\la$ be the stabilizer of $\la$ in $W_0$, and $W_0^\la$ the set of minimum length coset representatives in $W_0/(W_0)_\la$. The following is the not-necessarily-reduced analogue of \cite[Thm. 3.4]{RY}, which computes $\tilde{P}_\la$ instead of $P_\la$.

\begin{prop}\label{P:RYP} Let $\la\in X_+$. With notation as in Theorem \ref{T:TMac},
\begin{align}\label{E:RYP}
  P_\la(X;q;v_\bullet)   &= \sum_{u\in W_0^\la} v_{uu_\la^{-1}}^{-1}
  \sum_{p_J(u)\in \BP(u; m_\la)}
   X^{\wt(p_J(u))} v_{\dir{p_J(u)}} f_J(u)
\end{align}
where, for $u\in W_0^\la$, $f_J(u)$ denotes the product over $J^+$ times the product over $J^-$ in \eqref{E:RYE} for the alcove path $p_J(u)$.
\end{prop}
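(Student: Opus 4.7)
The plan is to combine the symmetrization definition \eqref{E:Pdef} with the Ram--Yip expansion of Theorem \ref{T:TMac}, reducing the claim to a Hecke-parameter identity that in turn follows from a length-additive factorization of $u_\la$ via Coxeter-theoretic facts.

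First, by \eqref{E:Pdef} and \eqref{E:Edef}, using $T_u = X^u$ for $u \in W_0$ (Remark \ref{R:W0}),
\begin{align*}
P_\la = \sum_{u \in W_0^\la} v_u v_{u_\la^{-1}}^{-1}\, X^u(\tE_\la).
\end{align*}
Applying Theorem \ref{T:TMac} to each $X^u(\tE_\la)$ yields
\begin{align*}
P_\la = \sum_{u \in W_0^\la} v_u v_{u_\la^{-1}}^{-1} \sum_{p_J(u) \in \BP(u;m_\la)} X^{\wt(p_J(u))} v_{\dir{p_J(u)}}\, f_J(u).
\end{align*}
To match the statement of the proposition it suffices to establish the scalar identity $v_u v_{u_\la^{-1}}^{-1} = v_{uu_\la^{-1}}^{-1}$ for all $u \in W_0^\la$. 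Since the Hecke parameters $v_\beta$ depend only on the $W(\tX)$-orbit of $\beta$ (hence, for classical $\beta$, only on the $W_0$-orbit) and $\Inv(w^{-1}) = -w\,\Inv(w)$, one has $v_w = v_{w^{-1}}$; the claim therefore reduces to $v_u \cdot v_{u u_\la^{-1}} = v_{u_\la}$.

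The key step is the length-additive factorization $u_\la = (u_\la u^{-1})\cdot u$ for $\la \in X_+$ and $u \in W_0^\la$. Since $\la$ is dominant, $(W_0)_\la = W_J$ is the standard parabolic with $J = \{i\in I_0 \mid \pair{\alpha_i^\vee}{\la}=0\}$. The antidominant element of $W_0\la$ is $w_0\la$, so $u_\la\in w_0 W_J$ is forced to be the unique minimum-length element $w_0 w_{0,J}$ of this coset. A direct check (using $w_{0,J}$ preserves $R_+\setminus R_+(X;J)$ and sends $R_+(X;J)$ to $R_-(X;J)$) gives $\Inv(u_\la) = R_+(X)\setminus R_+(X;J)$. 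For $u \in W_0^\la = W^J$, the minimum-length condition gives $u(\alpha_j) > 0$ for every $j \in J$, hence $u(R_+(X;J))\subset R_+$ and therefore $\Inv(u)\cap R_+(X;J)=\emptyset$. Thus $\Inv(u)\subset \Inv(u_\la)$; by the standard inversion-set criterion for left weak Bruhat order, this containment is equivalent to the desired length-additive factorization $u_\la=(u_\la u^{-1})\cdot u$.

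Given the factorization, the inversion set decomposes as the multiset union $\Inv(u_\la)=\Inv(u)\sqcup u^{-1}\Inv(u_\la u^{-1})$; since $v_\alpha$ is $W_0$-invariant this yields $v_{u_\la}=v_u\cdot v_{u_\la u^{-1}} = v_u\cdot v_{uu_\la^{-1}}$, which, combined with the preceding reduction, completes the proof. The main subtlety is showing that the inversion-set inclusion for $u\in W_0^\la$ actually produces left-weak (not merely full Bruhat) comparability with $u_\la$; this is precisely where the structure $(W_0)_\la = W_J$ standard parabolic and $u_\la = w_0 w_{0,J}$ is essential.
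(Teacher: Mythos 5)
Your proof is correct and follows the same route as the paper, which simply substitutes Theorem \ref{T:TMac} into \eqref{E:Pdef} via Remark \ref{R:W0}. The extra work you do --- verifying $v_u\,v_{u_\la^{-1}}^{-1}=v_{uu_\la^{-1}}^{-1}$ through the length-additive factorization $u_\la=(u_\la u^{-1})\cdot u$ for $u\in W_0^\la$ --- is a correct filling-in of a step the paper's one-line proof leaves implicit.
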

\begin{proof} The Proposition follows by applying Theorem \ref{T:TMac} to the summands of \eqref{E:Pdef} with the help of Remark \ref{R:W0}.
\end{proof}

\section{At $v\to0$}
We study the Ram-Yip formula when $v\to0$, that is, when all the Hecke parameters $v_{\alpha_i},v_{2\alpha_i}$ are set to a single variable $v$ which is then sent to zero.

\subsection{Quantum Bruhat graph for untwisted and dual untwisted affine type} Let $\tX$ be a reduced root datum of untwisted or dual untwisted affine type. Let $(X,Y)$ be the reduced double affine datum that gives rise to the pair $(\tX,\tY)$, where $X=P^X$ is the classical weight lattice of $\tX$ and $Y=P^Y$ is the weight lattice of type dual to (resp. the same as) $X$ if $\tX$ is untwisted (resp. dual untwisted). The quantum Bruhat graph $\QB(\tX)$ of $\tX$ is the directed edge-labeled graph with vertex  set $W_0(Y)\cong W_0(X)$ and directed labeled edges of the form $w\xrightarrow{\alpha} w s_\alpha$ for $w\in W_0(Y)$ and $\alpha\in R_+(Y)$, such that either
\begin{itemize}
\item $\ell(w s_\alpha) = \ell(w) + 1$, giving a covering relation in the usual Bruhat order, or
\item $\ell(w s_\alpha) = \ell(w) - \pair{2\rho^{\vee Y}}{\alpha} + 1$, in which
case the corresponding edge is called a {\em quantum edge}.
\end{itemize}
Here $2\rho^{\vee Y}=\sum_{\alpha^\vee\in R_+^\vee(Y)} \alpha^\vee$.

\begin{rem} \label{R:QB} Let us write $\QB_X$ for the quantum Bruhat graph in the literature \cite{BFP} \cite{LS} \cite{LNSSS}. It is associated with a finite root datum $X$: its vertices have labels $W_0(X)$ and its edges are labeled by elements of $R_+^\vee(X)$.

For $\tX$ untwisted, $X$ and $Y$ are of dual types, $\tX$ is the untwisted affinization of $X$, and $\QB(\tX)\cong \QB_X$ given by $W_0(Y)\cong W_0(X)$ via $s_i^Y\mapsto s_i^X$ for all $i\in I$, and labels correspond via
$\eta:Q^Y \cong Q^{\vee X}$.

For $\tX$ dual untwisted, $X$ and $Y$ have the same type, $X$ is the classical subsystem of $\tX$, and
$\QB(\tX)\cong\QB_{X^\vee}$ where $X^\vee$ is of type dual to that of $X$. The vertices correspond under $W_0(Y)\cong W_0(X^\vee)$ and the labels correspond under the canonical isomorphism $Q^Y \cong Q^{\vee X^\vee}$.

In the dual untwisted case the properties of the
quantum Bruhat graph are being studied in \cite{LNSSS2}.
\end{rem}

\subsection{The order of a term in the Ram-Yip formula}
\label{SS:ord}

With the notation of Theorem \ref{T:TMac}, for $p_J\in \BP(u; m_\la)$, define the affine root lattice element $\qwt(p_J)\in \bigoplus_{i\in I^Y} \Z \alpha_i^Y$ by
\begin{align}
\label{E:qwtdef}
\qwt(p_J) = \sum_{j\in J^-} \beta_j.
\end{align}
Let $\ord(p_J)\in\Z$ denote the order in $v$ of the summand of $p_J$ in the equal-parameter specialization of \eqref{E:RYE} (when all Hecke parameters $v_{\alpha_i}$ and $v_{2\alpha_i}$ have been set to $v$).

\begin{lem}\label{L:order} For all $p_J\in \BP(u; m_\la)$
\begin{align}
\ord(p_J) =  \ell(\dir{p_J}) - |J| - \pair{2\rho^{\vee Y}}{\qwt(p_J)}.
\end{align}
\end{lem}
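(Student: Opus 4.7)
The plan is to read off the $v$-order of each factor in the equal-parameter Ram-Yip formula \eqref{E:RYequalparam}. A term indexed by $p_J$ has the form
\begin{align*}
X^{\wt(p_J)} \, v^{\ell(\dir{p_J})} (v^{-1}-v)^{|J|} \prod_{j\in J}(1-\xi_j)^{-1} \prod_{j\in J^-} \xi_j,
\end{align*}
where by Remark \ref{R:Yeigenvalue} we have $\xi_j = q^{\deg(\beta_j)} v^{\pair{2\rho^{\vee Y}}{-\bbeta_j}}$. The strategy is simply to sum the $v$-valuations of the four factors, so the main thing to check is that the denominator factors $(1-\xi_j)^{-1}$ contribute nothing to the order.

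First I would account for the three easy contributions. The explicit $v^{\ell(\dir{p_J})}$ contributes $\ell(\dir{p_J})$; the leading term of $(v^{-1}-v)^{|J|}$ has $v$-order $-|J|$; and since $2\rho^{\vee Y}$ pairs trivially with $\delta^Y$, the product $\prod_{j\in J^-}\xi_j$ has $v$-order $\sum_{j\in J^-}\pair{2\rho^{\vee Y}}{-\bbeta_j} = -\pair{2\rho^{\vee Y}}{\qwt(p_J)}$ by the definition \eqref{E:qwtdef} of $\qwt(p_J)$.

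Next, the key step is to verify that $\prod_{j\in J}(1-\xi_j)^{-1}$ has $v$-order $0$. Because $m_\la$ is Grassmannian, Remark \ref{R:tlainversions} gives $\beta_j \in \Z_{>0}\delta^Y - R_+(Y)$ for each $j$, so $-\bbeta_j \in R_+(Y)$ and therefore $\pair{2\rho^{\vee Y}}{-\bbeta_j} > 0$. Hence $\xi_j$ has strictly positive $v$-order, and the geometric series expansion $(1-\xi_j)^{-1} = 1 + \xi_j + \xi_j^2 + \cdots$ shows this factor has $v$-order exactly $0$.

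Adding the four contributions gives
\begin{align*}
\ord(p_J) = \ell(\dir{p_J}) - |J| + 0 - \pair{2\rho^{\vee Y}}{\qwt(p_J)},
\end{align*}
which is the claimed formula. There is no substantive obstacle; the only subtle point is the appeal to the Grassmannian property of $m_\la$ to ensure that the denominator factors do not spoil the order count.
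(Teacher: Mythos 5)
Your proposal is correct and is essentially the paper's own argument: the paper's proof simply cites Remarks \ref{R:Yeigenvalue}, \ref{R:RYreduced}, and \ref{R:tlainversions}, which are exactly the three ingredients you use (the formula \eqref{E:RYequalparam}, the expression $\xi_j=q^{\deg(\beta_j)}v^{\pair{2\rho^{\vee Y}}{-\bbeta_j}}$, and the Grassmannian property guaranteeing $\xi_j$ has strictly positive $v$-order so the factors $(1-\xi_j)^{-1}$ contribute nothing). You have merely written out the valuation count that the paper leaves implicit.
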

\begin{proof} This follows directly from Remarks \ref{R:Yeigenvalue}, \ref{R:RYreduced} and \ref{R:tlainversions}.
\end{proof}

Recall the elements $(z_0,\dotsc,z_r)$ in \eqref{E:zdef} associated with $p_J\in\BP(u;m_\la)$
where $J=\{j_1<\dotsm<j_r\}$. Say that $p_J\in \BPQ(u;m_\la)$ (and say $p_J$ is a quantum alcove path) if
\begin{align}
  \dir{z_0} \overset{-\bbeta_{j_1}}{\longrightarrow} \dir{z_1}
  \overset{-\bbeta_{j_1}}{\longrightarrow}\dotsm
  \overset{-\bbeta_{j_r}}{\longrightarrow} \dir{z_r} 
\end{align}
is a path in $\QB(\tX)$.
Here $\bbeta$ is by definition the image of the
affine root $\beta$ in the classical root lattice.
By Remark \ref{R:tlainversions} $-\bbeta_j\in R_+(Y)$ and $\deg(\beta_j)>0$ for all $1\le j\le \ell$.

\begin{prop} \label{P:quantum} Every alcove walk
$p_J\in \BP(u;m_\la)$ has $\ord(p_J) \ge \ord(p_\emptyset)$
with equality if and only if
$p_J\in \BPQ(u;m_\la)$.
\end{prop}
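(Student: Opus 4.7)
The plan is to telescope along the sequence $z_0, z_1, \ldots, z_r$ associated to $p_J$ and show that the contribution of each fold to $\ord(p_J) - \ord(p_\emptyset)$ is nonnegative, with vanishing contribution exactly when that fold corresponds to an edge of $\QB(\tX)$. Since $p_\emptyset$ uses $J=\emptyset$, Lemma~\ref{L:order} gives $\ord(p_\emptyset)=\ell(\dir{z_0})$, and for general $J=\{j_1<\dotsm<j_r\}$, writing $\gamma_m := -\overline{\beta_{j_m}}\in R_+(Y)$ (positive by Remark~\ref{R:tlainversions}) and using $\pair{2\rho^{\vee Y}}{\delta^Y}=0$, we have
\begin{align*}
\ord(p_J)-\ord(p_\emptyset) \;=\; \sum_{m=1}^{r}\bigl(\Delta_m-1\bigr) \;+\; \sum_{m\,:\,j_m\in J^-}\pair{2\rho^{\vee Y}}{\gamma_m},
\end{align*}
where $\Delta_m := \ell(\dir{z_m})-\ell(\dir{z_{m-1}})$. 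I would split this sum by the sign $\epsilon_{j_m}$, getting
\begin{align*}
\sum_{m\,:\,j_m\in J^+}(\Delta_m-1)\;+\;\sum_{m\,:\,j_m\in J^-}\bigl(\Delta_m-1+\pair{2\rho^{\vee Y}}{\gamma_m}\bigr).
\end{align*}

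Next I would identify the sign $\epsilon_{j_m}$ with the sign of $\Delta_m$. By the remark following \eqref{E:signz}, $z_m\beta_{j_m}=u_{j_m-1}\alpha_{i_{j_m}}$, whose classical projection is $\dir{z_m}(\overline{\beta_{j_m}})=-\dir{z_m}(\gamma_m)$. Combining the definition of $\epsilon_{j_m}$ with $\dir{z_m}=\dir{z_{m-1}}s_{\gamma_m}$ and the standard fact that for $w\in W_0$ and $\alpha\in R_+$, $\ell(ws_\alpha)>\ell(w)$ iff $w(\alpha)\in R_+$, one finds: $\epsilon_{j_m}=+1$ iff $\dir{z_{m-1}}(\gamma_m)\in R_+$ iff $\Delta_m>0$, and $\epsilon_{j_m}=-1$ iff $\Delta_m<0$. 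Since reflection changes length by an odd integer, $\Delta_m>0$ forces $\Delta_m\geq 1$.

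The main step is the Brenti--Fomin--Postnikov inequality (applied to the classical Weyl group $W_0(Y)$, which carries the quantum Bruhat graph $\QB(\tX)$ per Remark~\ref{R:QB}): for any $w\in W_0$ and $\alpha\in R_+(Y)$,
\begin{align*}
\ell(ws_\alpha)\;\geq\;\ell(w)-\pair{2\rho^{\vee Y}}{\alpha}+1,
\end{align*}
with equality iff $w\xrightarrow{\alpha}ws_\alpha$ is a quantum edge; dually, if $\ell(ws_\alpha)>\ell(w)$ then $\ell(ws_\alpha)\geq\ell(w)+1$ with equality (a Bruhat covering) iff $w\xrightarrow{\alpha}ws_\alpha$ is a Bruhat edge of $\QB$. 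Applied at step $m$ with $w=\dir{z_{m-1}}$ and $\alpha=\gamma_m$: for $m$ with $j_m\in J^+$ one has $\Delta_m\geq 1$, so $\Delta_m-1\geq 0$ with equality iff that step is a Bruhat edge; for $m$ with $j_m\in J^-$ one has $\Delta_m\geq 1-\pair{2\rho^{\vee Y}}{\gamma_m}$, so $\Delta_m-1+\pair{2\rho^{\vee Y}}{\gamma_m}\geq 0$ with equality iff that step is a quantum edge.

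All summands are nonnegative, which gives $\ord(p_J)\geq \ord(p_\emptyset)$, with equality iff every transition $\dir{z_{m-1}}\xrightarrow{\gamma_m}\dir{z_m}$ is an edge of $\QB(\tX)$, i.e.\ iff $p_J\in\BPQ(u;m_\la)$. The main obstacle is cleanly invoking the BFP tight lower bound in the dual untwisted case; since $\QB(\tX)$ coincides with the classical quantum Bruhat graph of $X$ or $X^\vee$ via Remark~\ref{R:QB}, the inequality reduces to the classical finite-type statement, and this is the only nontrivial input beyond bookkeeping.
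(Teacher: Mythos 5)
Your proposal is correct and follows essentially the same route as the paper: the paper's induction on $|J|$ (peeling off the last fold and comparing $\ord(p_J)$ with $\ord(p_{\hat J})$) is precisely your telescoping sum over the $z_m$, and both arguments rest on the same two inputs — the identification of the fold sign $\epsilon_{j_m}$ with the sign of $\ell(\dir{z_m})-\ell(\dir{z_{m-1}})$, and the Brenti--Fomin--Postnikov length bound $\ell(ws_\alpha)\ge\ell(w)-\pair{2\rho^{\vee Y}}{\alpha}+1$ with equality characterizing quantum edges (Bruhat covers in the positive-fold case).
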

\begin{proof}
The proof proceeds by induction on $|J|$.
For $J=\emptyset$ the induction hypothesis holds trivially.
Otherwise let $J\subset \{1,2,\dotsc,\ell\}$ be nonempty
with maximum element $j_r=M$ and let $\hJ=J\setminus\{M\}$.
By induction $\ord(p_{\hJ})\ge0$
with equality if and only if $p_{\hJ}\in\BPQ(u;m_\la)$.
We have 
\begin{align}
  \ord(p_J) - \ord(p_{\hJ}) = 
  \ell(\dir{(p_J)}) - \ell(\dir{(p_{\hJ})}) - 1 
  - \chi \pair{2\rho^{\vee Y}}{\beta_j}.
\end{align}
Here $\chi=1$ if $M$ is a negative fold and $\chi=0$ otherwise. 

We have $u_{M-1}\alpha_{i_M}=z_r\be_M$ and $\dir{p_J} = \dir{p_{\hJ}}s_{-\bbeta_M}$.
Also, recall that $-\bbeta_M\in R_+(Y)$ due to \eqref{E:GrassInv}.
\begin{itemize}
\item If $M$ is a positive fold of $p_J$ then $\overline{z_r\be_M}\in R_+(Y)$. Equivalently
$$\dir{p_J}(-\overline{\beta}_M)=-\overline{z_r \beta_M}\in -R_+(Y).$$ It follows that
\begin{align}\label{E:posfoldineq}
  \dir{p_J} > \dir{p_{\hJ}}
\end{align}
and therefore $\ord(p_J) \ge  \ord(p_{\hJ})$ with
equality if and only if $\dir{p_J}$
is a Bruhat cover of $\dir{p_{\hJ}}$. 
\item If $M$ is a negative fold of $p_J$ then $\overline{z_r\be_M}\in -R_+(Y)$.
Hence
\begin{align}\label{E:posfoldineq2}
  \dir{p_J} < \dir{p_{\hJ}}
\end{align}
and
\begin{align}\label{E:posfoldlen}
  \ell(\dir{p_J})-\ell(\dir{p_{\hJ}}) \ge 
  -\ell(s_{\bbeta_M}) 
  \ge
  1 - 2\pair{\rho^{\vee Y}}{-\beta_M}.
\end{align}
Therefore, $\ord(p_J)\ge \ord(p_{\hJ})$
with equality if and only if
$\dir{p_{\hJ}} \overset{-\bbeta_M}{\longrightarrow} \dir{p_J}$ is a quantum edge.
\end{itemize}
\end{proof}

\subsection{Specialization at $v=0$}

\begin{cor}\label{C:quantum}
For any $u\in W(\tY)$ and any $\la\in X$,
let $m_\la = \pi^Y s_{i_1}\cdots s_{i_\ell}$ be a reduced expression in $W(\tY)$, and $\wa = (i_1,\ldots,i_\ell)$. Then
\begin{align}\label{E:t0limit}
  \lim_{v\to 0} v^{-1}_{\dir{u} u_\la^{-1}} X^u \tE_\la = 
  \sum_{p\in \BPQ(u; m_\la)} X^{\wt(p)} q^{\deg(\qwt(p))}
\end{align}
Here $v\to 0$ means that all Hecke parameters are set equal to $v$ and then $v$ is sent to $0$.
\end{cor}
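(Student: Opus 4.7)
The strategy is to extract the lowest-order term in $v$ from the equal-parameter Ram-Yip formula \eqref{E:RYequalparam} of Remark \ref{R:RYreduced} and show that only the quantum alcove paths survive. By \eqref{E:RYequalparam},
\begin{align*}
X^u(\tE_\la) = \sum_{p_J\in \BP(u;m_\la)} X^{\wt(p_J)}\, v^{\ell(\dir{p_J})}(v^{-1}-v)^{|J|}\prod_{j\in J}(1-\xi_j)^{-1}\prod_{j\in J^-}\xi_j,
\end{align*}
where, by Remark \ref{R:Yeigenvalue}, $\xi_j = q^{\deg(\beta_j)}\, v^{\pair{2\rho^{\vee Y}}{-\bbeta_j}}$. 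Using \eqref{E:GrassInv}, for each $j$ we have $\deg(\beta_j)>0$ and $-\bbeta_j\in R_+(Y)$, hence $\pair{2\rho^{\vee Y}}{-\bbeta_j}>0$, so $\xi_j\to 0$ and $(1-\xi_j)^{-1}\to 1$ as $v\to 0$. Likewise $(v^{-1}-v)^{|J|} = v^{-|J|}(1-v^2)^{|J|}$, with $(1-v^2)^{|J|}\to 1$. Collecting powers of $v$ in the summand of $p_J$ reproduces the order of Lemma \ref{L:order},
\begin{align*}
\ord(p_J) = \ell(\dir{p_J}) - |J| - \pair{2\rho^{\vee Y}}{\qwt(p_J)},
\end{align*}
with leading coefficient $q^{\deg(\qwt(p_J))}$.

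Next I invoke Proposition \ref{P:quantum}, which states that $\ord(p_J)\ge \ord(p_\emptyset)$ with equality exactly when $p_J\in\BPQ(u;m_\la)$. A short computation identifies $\ord(p_\emptyset)$: writing $u = t_\mu\,\dir{u}$ in $W(\tY)=X\rtimes W_0$ and using $m_\la = t_\la u_\la^{-1}$, one has
\begin{align*}
u\, m_\la = t_{\mu + \dir{u}(\la)}\, \dir{u}\, u_\la^{-1},\qquad \dir{p_\emptyset} = \dir{u}\, u_\la^{-1},
\end{align*}
so $\ord(p_\emptyset) = \ell(\dir{u}\,u_\la^{-1})$, which matches $v_{\dir{u}\,u_\la^{-1}} = v^{\ell(\dir{u}\,u_\la^{-1})}$ in the equal-parameter setting.

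Dividing the Ram-Yip expansion by $v_{\dir{u}\,u_\la^{-1}}$ and letting $v\to 0$, every summand indexed by a non-quantum alcove path vanishes (its order strictly exceeds $\ord(p_\emptyset)$), while each quantum summand contributes its leading coefficient $q^{\deg(\qwt(p_J))}\,X^{\wt(p_J)}$, as the factors $(1-v^2)^{|J|}$ and $(1-\xi_j)^{-1}$ tend to $1$. This yields exactly \eqref{E:t0limit}.

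The computation is essentially routine once Proposition \ref{P:quantum} is in hand; the only genuine subtlety is verifying that no further cancellations or contributions arise from the $(1-\xi_j)^{-1}$ factors, but this is immediate since every $\xi_j$ is a strictly positive power of $v$ (times a power of $q$), so the factors are regular at $v=0$ with value $1$.
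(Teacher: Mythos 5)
Your proposal is correct and follows essentially the same route as the paper: apply the equal-parameter Ram--Yip formula, compute orders in $v$ via Lemma \ref{L:order}, invoke Proposition \ref{P:quantum} to isolate the quantum alcove paths as the minimal-order terms, and identify $\ord(p_\emptyset)=\ell(\dir{u}\,u_\la^{-1})$ as the normalizing power. Your explicit decomposition $u=t_\mu\,\dir{u}$ when computing $\dir{p_\emptyset}$ is in fact slightly more careful than the paper's own wording for general $u\in W(\tY)$, but the argument is the same.
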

\begin{proof}
Since $\ep(p_\emptyset)=um_\la=u t_\la u_\la^{-1} = t_{u(\la)} u u_\la^{-1}$, we have $\ord(p_\emptyset) = \ell(\dir{p_\emptyset}) = \ell(u u_\la^{-1})$. Since every alcove walk $p_J$ satisfies $\ord(p_J)\ge \ord(p_\emptyset)$, the limit in \eqref{E:t0limit} is well-defined. Terms corresponding to $p_J$ not in $\BPQ(u;m_\la)$ have $\ord(p_J)>\ord(p_\emptyset)$ and therefore vanish in the limit. 
\end{proof}

\comment{
\begin{rem}
Formula \eqref{E:t0limit} remains valid when the Hecke parameters tend to $0$ independently,
as long as $v_{\al_i}/v_{2\al_i}\to 1$.
In general, one may define \fixit{Dan, the entirely general case might have up to 5 parameters.}
\begin{align}
\ord_k(p_J) = \ell_k(\dir{p_J})
- \sum_{j\in J} k_{\bbeta_j^X} - \pair{\qwt(p_J)}{\rho_k^{\vee Y}}
\end{align}
as an element of $\Z k_s \oplus \Z k_l$ where $\ell_k(w)=\sum_{\al\in R_+(X)\cap -w^{-1}R_+(X)}k_\al$
for $w \in W_0$ and $\bbeta_j^X\in R_+(X)$ satisfies $s_{\bbeta_j^X}=s_{\bbeta_j}$.
A straightforward refinement of the proof of Proposition~\ref{P:quantum}
shows that the coefficients of $k_s$ and $k_l$ in $\ord_k(p_J)$ are greater than or equal to those
in $\ord_k(p_\emptyset)$, with both being equal 
if and only if $p_J\in \BPQ(u;m_\la)$.
\end{rem}
}

\begin{ex} \label{X:D32red} Let $\wa$ and $\beta_j$ be as in Example \ref{X:setup} and let $u=\id$ and $\la=-\vartheta=-e_1$.
Writing $e_1=(1,0)$ and $e_2=(0,1)$ we have
\begin{align*}
  E_{(-1,0)}^{D_{2+1}^{(2)}}(X;q;0) = X^{(-1,0)} + X^{(0,-1)} + X^{(0,1)}  + X^{(1,0)} + (q+1)X^{(0,0)}.
\end{align*}
For the Ram-Yip formula we have the reduced word $\wa=(1,2,1,0)$ for 
$t_\la =  m_\la = s_1s_2s_1s_0 \in W(\tY)=W_a(D_3^{(2)})$. We have $\beta_1=2\delta-(1,-1)$, $\beta_2=2\delta-(1,0)$, $\beta_3=2\delta-(1,1)$, $\beta_4=\delta-(1,0)$.
The quantum alcove paths are as follows.
\begin{equation*}
\begin{array}{|c||c|c|c|c|} \hline
J & \text{signs} & \mathrm{dir} & \mathrm{wt} & q^{?} \\ \hline \hline
\emptyset & ++++ & 1 & (-1,0) & q^0 \\ \hline
1 & ++++ & s_1 & (0,-1) & q^0 \\ \hline
12  & +++- & s_2s_1 & (0,1) & q^0 \\ \hline
14 & ++++ & s_2s_1 & (0,0) & q^0 \\ \hline
123 & +++- & s_1s_2s_1 & (1,0)&q^0 \\ \hline
1234 & +++- & 1 & (0,0) & q^1 \\ \hline
\end{array}
\end{equation*}
Here is the tree of alcove paths, but restricted to the
elements of $\BPQ(u;m_\la)$. The vertex $p_J$ has been labeled by $\dir{p_J}$ on the top and $J$ on the bottom. The arrows are labeled by $\beta_j$ rather than $-\bbeta_j$.
\begin{align*}
\begin{diagram}
\node{\begin{matrix}
1 \\ \emptyset
\end{matrix}
} \arrow{e,t}{\beta_1}
\node{\begin{matrix}
s_1 \\ 1
\end{matrix}
}\arrow{e,t}{\beta_2}
\arrow{s,t}{\beta_4}
\node{\begin{matrix}
s_2s_1 \\ 12
\end{matrix}
}
\arrow{e,t}{\beta_3}
\node{\begin{matrix}
s_1s_2s_1 \\ 123
\end{matrix}
}
\arrow{e,t}{\beta_4}
\node{\begin{matrix}
1 \\ 1234
\end{matrix}
} \\
\node[2]{
\begin{matrix}
s_2s_1 \\ 14
\end{matrix}
}
\end{diagram}
\end{align*}
There is one quantum edge going from $p_{123}\to p_{1234}$ and the other edges are Bruhat.
\end{ex}

\begin{cor}\label{C:symmact0} For $\la\in X_+$ we have
\begin{align}
  P_\la(X;q;0) 
  &= \sum_{u\in W_0^\la} \sum_{p_J\in \BPQ(u;m_\la)} X^{\wt(p_J)} q^{\deg(\qwt(p_J))}.
\end{align}
\end{cor}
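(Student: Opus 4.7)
The plan is to apply the symmetric Ram--Yip formula (Proposition \ref{P:RYP}) at the equal-parameter specialization and then take the termwise $v\to 0$ limit using Corollary \ref{C:quantum}.

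First, I specialize Proposition \ref{P:RYP} to the equal-parameter case in which every Hecke parameter $v_{\alpha_i}, v_{2\alpha_i}$ is set to a common variable $v$, so that $v_w = v^{\ell(w)}$ for all $w\in W(\tY)$. By Theorem \ref{T:TMac} the inner sum is precisely $X^u\tE_\la$, and since $u\in W_0^\la\subset W_0$ forces $\dir{u}=u$, this yields
\begin{align*}
P_\la(X;q;v) \;=\; \sum_{u\in W_0^\la} v^{-\ell(uu_\la^{-1})}\, X^u\tE_\la.
\end{align*}

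Next, I apply Corollary \ref{C:quantum} termwise. For each $u\in W_0^\la$, the prefactor $v^{-\ell(uu_\la^{-1})}$ is exactly the normalization $v^{-1}_{\dir{u}\,u_\la^{-1}}$ appearing there, so
\begin{align*}
\lim_{v\to 0}\, v^{-\ell(uu_\la^{-1})}\, X^u\tE_\la \;=\; \sum_{p\in \BPQ(u;m_\la)} X^{\wt(p)}\, q^{\deg(\qwt(p))}.
\end{align*}
Since the outer sum over $W_0^\la$ is finite and each termwise limit exists as a polynomial expression in $q$ and the monomials $X^\bullet$, the limit commutes with the sum, giving the claimed identity.

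I expect no substantive obstacle. All the genuine work -- the order computation of Lemma \ref{L:order}, the inequality and its quantum-Bruhat equality case in Proposition \ref{P:quantum}, and their packaging in Corollary \ref{C:quantum} -- has already been carried out. The present statement is a routine symmetrization over the coset representatives $W_0^\la$, and the only bookkeeping is to verify that the $v$-normalization in Proposition \ref{P:RYP} matches the one in Corollary \ref{C:quantum}, which it does because $\dir{u}=u$ for every $u\in W_0$.
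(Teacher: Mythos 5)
Your proof is correct and is essentially the paper's argument: the paper deduces the corollary directly from \eqref{E:Pdef} together with Corollary \ref{C:quantum} (and Remark \ref{R:W0}), and your route through Proposition \ref{P:RYP} is just that same combination with the Ram--Yip expansion already inserted, since the inner sum there is exactly $X^u\tE_\la$. Your bookkeeping check that $v_u v_{u_\la^{-1}}^{-1}=v^{-\ell(uu_\la^{-1})}=v^{-1}_{\dir{u}u_\la^{-1}}$ for $u\in W_0^\la$ is the only point requiring care, and it holds as you say.
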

\begin{proof} Follows from \eqref{E:Pdef}
and Corollary \ref{C:quantum}.
\end{proof}

\subsection{Reduced $A_{2n}^{(2)}$ at $v=0$}
\label{SS:A2}

Consider Proposition \ref{P:RYA2n2}.
After sending $v\to0$, all the denominators tend to $1$ and the only different behavior
that enters the Ram-Yip formula comes from \eqref{E:psi0a2}:
a positive fold $j\in J^0$ contributes $\xi_j$ instead of the usual $1$.

With the notation of Proposition \ref{P:RYA2n2},
for $p_J\in \BP(u; m_\la)$ (of type $D_{n+1}^{(2)}$) define
\begin{align}
\label{E:nwta2}
\qwt_{A_{2n}^{(2)}}(p_J) = \sum_{j\in J^0\cup J^-} \beta_j.
\end{align}

\begin{lem}\label{L:ordera2} 
The order of $v$ in the summand of $p_J$ in the formula \eqref{E:nsmacA2n2ep} is
\begin{align}
\ord(p_J) =  -\ell(u_\la^{-1}) + \ell(\dir{p_J}) - |J| - \pair{2\rho^{\vee Y}}{\qwt_{A_{2n}^{(2)}}(p_J)}.
\end{align}
\end{lem}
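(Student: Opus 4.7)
The plan is to simply read off the $v$-order of each factor in the summand indexed by $p_J$ in formula~\eqref{E:nsmacA2n2ep}, along the lines of the proof of Lemma~\ref{L:order} but incorporating the extra $\xi_j$ numerators coming from the positions $j \in J^0$.

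First I will recall from Remark~\ref{R:Yeigenvalue} that in the equal-parameter specialization $\xi_j = q^{\deg(\beta_j)} v^{\pair{2\rho^{\vee Y}}{-\bbeta_j}}$. By Remark~\ref{R:tlainversions}, because $m_\la$ is Grassmannian, each $\beta_j \in \Z_{>0}\delta^Y - R_+(Y)$, so $-\bbeta_j \in R_+(Y)$ and $\pair{2\rho^{\vee Y}}{-\bbeta_j}$ is a positive integer. In particular every $\xi_j$ has strictly positive $v$-order. Therefore each factor of the form $(1 - \xi_j)^{\pm 1}$ or $(1 - \xi_j^2)^{\pm 1}$ contributes $v$-order $0$, and the same holds for the factor $(1 - v^2)^{|J|}$.

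Then it remains to tally the surviving contributions. The explicit prefactor gives $-\ell(u_\la^{-1}) + \ell(\dir{p_J}) - |J|$. Each $j \in J^+$ contributes nothing further, each $j \in J^0$ contributes an additional $\xi_j$ of $v$-order $\pair{2\rho^{\vee Y}}{-\bbeta_j}$, and the same holds for each $j \in J^-$. Summing and using that $2\rho^{\vee Y}$ pairs trivially with $\delta^Y$, so that $\pair{2\rho^{\vee Y}}{-\bbeta_j} = -\pair{2\rho^{\vee Y}}{\beta_j}$, I obtain
\[
\ord(p_J) = -\ell(u_\la^{-1}) + \ell(\dir{p_J}) - |J| - \pair{2\rho^{\vee Y}}{\qwt_{A_{2n}^{(2)}}(p_J)},
\]
with $\qwt_{A_{2n}^{(2)}}(p_J) = \sum_{j \in J^0 \cup J^-} \beta_j$ as in \eqref{E:nwta2}, which is the claim.

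The only point that requires attention is bookkeeping: by construction the sets $J^0$, $J^+$, $J^-$ form a partition of $J$, so combining the explicit $v^{-|J|}$ in the prefactor with the $\xi_j$-numerators indexed by $J^0 \cup J^-$ incurs no double-counting. Beyond this, the statement is an immediate order computation, so I do not anticipate a substantive obstacle.
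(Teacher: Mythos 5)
Your proposal is correct and matches the paper's (implicit) argument: the paper states this lemma without proof, treating it as the same direct order count used for Lemma~\ref{L:order}, namely reading off the $v$-order of each factor in \eqref{E:nsmacA2n2ep} using that every $\xi_j$ has strictly positive $v$-order by Remark~\ref{R:tlainversions}. Your bookkeeping of the extra $\xi_j$ numerators over $J^0\cup J^-$ and the identification $\pair{2\rho^{\vee Y}}{-\bbeta_j}=-\pair{2\rho^{\vee Y}}{\beta_j}$ is exactly what is needed.
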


Let $\BPQ_{A_{2n}^{(2)}}(u;m_\la)$ be the set of quantum alcove paths $p_J\in \BPQ_{D_{n+1}^{(2)}}(u;m_\la)$
whose Bruhat edges never come from the simple reflection $\alpha_0$, or equivalently, the corresponding root $\beta_j$ is not in $(2\Z+1)\delta^Y+R^s(Y)$ where $R^s(Y)$ are the short roots.

\begin{prop}\label{P:A2} We have
\begin{align}\label{E:A2E}
 \lim_{v\to0} v^{\ell(u_\la^{-1})-\ell(\dir{u} u_\la^{-1})}  X^u
  E_\la^{A_{2n}^{(2)}}(X;q,v) = \sum_{p\in \BPQ_{A_{2n}^{(2)}}(u; m_\la)}
  X^{\wt(p)} q^{\deg(\qwt_{A_{2n}^{(2)}}(p))}.
\end{align}
\end{prop}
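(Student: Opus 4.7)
The plan is to mirror the proof of Corollary \ref{C:quantum}, with extra bookkeeping coming from folds at the doubled node $i_M = 0$. By Lemma \ref{L:ordera2}, $\ord(p_\emptyset) = -\ell(u_\la^{-1}) + \ell(\dir{u} u_\la^{-1})$, which is exactly the exponent being cancelled on the left-hand side of \eqref{E:A2E}. Thus the claim reduces to (i) showing $\ord(p_J) \ge \ord(p_\emptyset)$ for every $p_J \in \BP(u;m_\la)$, (ii) identifying the equality set as $\BPQ_{A_{2n}^{(2)}}(u;m_\la)$, and (iii) computing the surviving $q$-coefficient of each such path. Step (iii) is a direct calculation: each denominator $(1-\xi_j)$ or $(1-\xi_j^2)$ tends to $1$ since $-\bbeta_j \in R_+(Y)$ by \eqref{E:GrassInv}, and each $\xi_j$ evaluates at $v=0$ to $q^{\deg(\beta_j)}$, so the product over $j \in J^0 \cup J^-$ equals $q^{\deg(\qwt_{A_{2n}^{(2)}}(p_J))}$ by \eqref{E:nwta2}.

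For (i) and (ii) I would induct on $|J|$, tracking the change of $\ord$ when the last fold $M = \max(J)$ is inserted. If $i_M \ne 0$, then by Lemma \ref{L:reducedorbitafex} the root $\beta_M$ lies either in a long-root orbit or in the short orbit $2\Z\delta^Y + R^s(Y)$ of $\alpha_n$; in either case the argument of Proposition \ref{P:quantum} applies verbatim, giving equality iff the last edge is a Bruhat cover (when $M \in J^+$) or a quantum edge (when $M \in J^-$). When $i_M = 0$, however, the Ram-Yip coefficient contains the additional factor $\xi_M$, which carries the positive $v$-order $\pair{2\rho^{\vee Y}}{-\bbeta_M}$.

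This additional $\xi_M$ factor is where the $A_{2n}^{(2)}$ combinatorics diverges from $D_{n+1}^{(2)}$ and is the main technical point. If $M \in J^{0-}$, the extra order is absorbed by the length drop $-\ell(s_{\bbeta_M}) = 1 - 2\pair{\rho^{\vee Y}}{-\bbeta_M}$, and one recovers the equality condition ``quantum edge'' exactly as in the negative-fold case of Proposition \ref{P:quantum}. The delicate sub-case is $M \in J^{0+}$: even when the step is a classical Bruhat cover (so $\ell(\dir{p_J}) - \ell(\dir{p_{\hJ}}) = 1$), the extra $\xi_M$ contributes the strictly positive increment $\pair{2\rho^{\vee Y}}{-\bbeta_M}$, so the path fails to achieve $\ord(p_\emptyset)$. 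This is precisely why positive folds at $i_M = 0$ -- equivalently, Bruhat edges labeled by roots in $(2\Z+1)\delta^Y + R^s(Y)$ -- must be excluded, matching the definition of $\BPQ_{A_{2n}^{(2)}}(u;m_\la)$. Once this case analysis is in place, passing to the limit $v \to 0$ in \eqref{E:nsmacA2n2ep} and invoking step (iii) yields the claim.
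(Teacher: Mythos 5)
Your proposal is correct and follows essentially the same route as the paper: reduce to the inductive order analysis of Proposition \ref{P:quantum}, observe that a negative fold at the node $0$ behaves exactly like a quantum edge (the extra $v$-order of $\xi_M$ being absorbed by the length drop $-\ell(s_{\bbeta_M})$), and that a positive fold at the node $0$ strictly increases $\ord$ even when it is a Bruhat cover, which is precisely why such edges are excluded from $\BPQ_{A_{2n}^{(2)}}(u;m_\la)$. The only cosmetic imprecision is the phrase that $\xi_j$ ``evaluates at $v=0$ to $q^{\deg(\beta_j)}$'' --- strictly, $q^{\deg(\beta_j)}$ is what survives after the $v$-power of $\xi_j$ has been accounted for in $\ord(p_J)$ --- but your bookkeeping via Lemma \ref{L:ordera2} makes the intended meaning correct.
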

\begin{proof} The proof is similar to that for Proposition \ref{P:quantum} except paths $p_J$ with a step $j\in J^0$. We have $\beta_j \in (2\Z+1)\delta-R_+(Y)$ by  Lemma \ref{L:reducedorbitafex} and Remark \ref{R:tlainversions}.
If the sign of the fold at $j$ is negative then the proof goes as before for the case of a quantum edge in the quantum Bruhat graph. If the sign of the fold at $j$ is positive then when this fold gets added to the path, the change in $\qwt_{A_{2n}^{(2)}}$ is $-\pair{2\rho^{\vee Y}}{\beta_j}>0$ so that $\ord(p_J)$ cannot be minimum. Because positive folds in paths of type $D_{n+1}^{(2)}$ of minimum order, correspond to Bruhat steps in $\QB(D_{n+1}^{(2)})$, the Proposition follows.
\end{proof}

\begin{ex}\label{X:a2} Let $\la=-\omega_1\in P(C_2)$, the classical weight lattice of $A_{2\cdot2}^{(2)}$. 
It can be viewed as an element in $X$ where $(X,Y)=(Q(B_2),Q(B_2))$ is the Koornwinder double affine datum. 
The computation of $E_\la(X;q;0)$ in type $A_{2n}^{(2)}$ consists of the computation of $E_\la(X;q;0)$ in type $D_{2+1}^{(2)}$ as in Example \ref{X:D32red}, but  with some branches of the tree truncated: the Bruhat edges for roots $\beta_j$ of the form $(2\Z+1)\delta+\beta$ for $\beta$ short, or equivalently, with $i_j=0$. In this example the only such root is $\beta_4=\delta-(1,0)$ and the only Bruhat edge labeled by this root is the edge $p_{1}\to p_{14}$. The answer is
\begin{align*}
  E_{(-1,0)}^{A_{2\cdot2}^{(2)}}(X;q;0) = X^{(-1,0)} + X^{(0,-1)} + X^{(0,1)}  + X^{(1,0)} + q X^{(0,0)}.
\end{align*}
\end{ex}

\subsection{Reduced $A_{2n}^{(2)\dagger}$}
\label{SS:A2dag}

Consider Proposition \ref{P:RYA2n2dag}.
The corresponding change in the Ram-Yip formula from $D_{n+1}^{(2)}$ is that if $j\in J^{0-}$ then the contribution to $\qwt$ will be $2\beta_j$ instead of $\beta_j$.

\begin{align}
\label{E:nwta2dag}
\qwt_{A_{2n}^{(2)\dag}}(p_J) = \sum_{j\in J^-} \beta_j
+ \sum_{\substack{j\in J^0 \\ \text{$j$ is a negative fold}}} 2\beta_j.
\end{align}

\begin{lem}\label{L:ordera2dag} 
The order of $v$ in the summand of $p_J$ in \eqref{E:nsmacA2n2dagep} is given by
\begin{align}
\ord(p_J) = -\ell(u_\la^{-1}) + \ell(\dir{p_J}) - |J| - \pair{2\rho^{\vee Y}}{\qwt_{A_{2n}^{(2)\dagger}}(p_J)}.
\end{align}
\end{lem}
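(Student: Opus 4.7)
The statement is a direct bookkeeping of the $v$-order of each factor in the summand of $p_J$ in \eqref{E:nsmacA2n2dagep}, exactly analogous to Lemma \ref{L:order}. The plan is to compute the order of each piece separately using Remark \ref{R:Yeigenvalue} and Remark \ref{R:tlainversions}, then regroup the terms to recognize the definition \eqref{E:nwta2dag} of $\qwt_{A_{2n}^{(2)\dagger}}$.

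First, I would record that by Remark \ref{R:Yeigenvalue} (which applies since we have sent all Hecke parameters to a single $v$), the eigenvalue $\xi_j = q^{\deg(\beta_j)} v^{\pair{2\rho^{\vee Y}}{-\bbeta_j}}$; and by Remark \ref{R:tlainversions}, both $\deg(\beta_j)>0$ and $-\bbeta_j\in R_+(Y)$, so $\pair{2\rho^{\vee Y}}{-\bbeta_j}>0$. Consequently the $v$-adic order of $\xi_j$ is strictly positive. This means $\xi_j\to 0$ as $v\to 0$, so the rational factors $\tfrac{1}{1-\xi_j}$ and $\tfrac{1}{1-\xi_j^2}$ each have $v$-order $0$, and likewise $(1-v^2)^{|J|}$ has order $0$.

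Next I would tabulate the orders of the remaining factors in \eqref{E:nsmacA2n2dagep}. The explicit prefactor $v^{-\ell(u_\la^{-1})+\ell(\dir{p_J})-|J|}$ contributes its exponent directly. For $j\in J^{0+}$ and $j\in J^+$ the factors are $\tfrac{1}{1-\xi_j^2}$ and $\tfrac{1}{1-\xi_j}$, each of order $0$. For $j\in J^-$ the factor $\tfrac{\xi_j}{1-\xi_j}$ has order $\pair{2\rho^{\vee Y}}{-\bbeta_j}$, and for $j\in J^{0-}$ the factor $\tfrac{\xi_j^2}{1-\xi_j^2}$ has order $2\pair{2\rho^{\vee Y}}{-\bbeta_j}$. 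Summing all contributions gives
\begin{align*}
\ord(p_J) &= -\ell(u_\la^{-1})+\ell(\dir{p_J})-|J| + \sum_{j\in J^-}\pair{2\rho^{\vee Y}}{-\bbeta_j} + \sum_{j\in J^{0-}} 2\pair{2\rho^{\vee Y}}{-\bbeta_j}.
\end{align*}

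Finally I would rewrite the two sums using linearity of the pairing and the fact that $2\rho^{\vee Y}$ is classical, so $\pair{2\rho^{\vee Y}}{\beta_j}=\pair{2\rho^{\vee Y}}{\bbeta_j}$; this produces
\begin{align*}
\sum_{j\in J^-}\pair{2\rho^{\vee Y}}{-\bbeta_j}+\sum_{j\in J^{0-}}2\pair{2\rho^{\vee Y}}{-\bbeta_j} = -\pair{2\rho^{\vee Y}}{\qwt_{A_{2n}^{(2)\dagger}}(p_J)}
\end{align*}
by the definition \eqref{E:nwta2dag}, yielding the desired formula. There is no real obstacle here: the only subtlety is the separation of the contribution of $J^0$ into $J^{0\pm}$ according to whether the extra factor is $\tfrac{1}{1-\xi_j^2}$ or $\tfrac{\xi_j^2}{1-\xi_j^2}$, which is precisely what motivates counting $j\in J^{0-}$ with a coefficient of $2$ in $\qwt_{A_{2n}^{(2)\dagger}}$.
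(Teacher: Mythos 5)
Your proof is correct and is exactly the direct bookkeeping the paper intends: the lemma is stated there without proof, as the immediate analogue of Lemma \ref{L:order}, whose own proof is just the citation of Remarks \ref{R:Yeigenvalue} and \ref{R:tlainversions} that you spell out. Your accounting of the orders of each factor and the regrouping into $\pair{2\rho^{\vee Y}}{\qwt_{A_{2n}^{(2)\dagger}}(p_J)}$ via \eqref{E:nwta2dag} is accurate, including the key point that $j\in J^{0-}$ contributes $2\beta_j$.
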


Let $\BPQ_{A_{2n}^{(2)\dag}}(u;m_\la)$ be the set of quantum alcove paths $p_J\in \BPQ_{D_{n+1}^{(2)}}(u;m_\la)$
whose quantum edges never come from the simple reflection $\alpha_0$, or equivalently, the corresponding root $\beta_j$ is not in $(2\Z+1)\delta^Y+R^s(Y)$ where $R^s(Y)$ are the short roots.

\begin{prop}\label{P:A2dag} 
For reduced type $A_{2n}^{(2)\dag}$ with all Hecke parameters sent to $0$,
\begin{align}\label{E:A2dagE}
 \lim_{v\to0} v^{\ell(u_\la^{-1})-\ell(u u_\la^{-1})}  X^u
  E_\la^{A_{2n}^{(2)\dag}}(X;q,v) = \sum_{p\in \BPQ_{A_{2n}^{(2)\dag}}(u; m_\la)}
  X^{\wt(p)} q^{\deg(\qwt_{A_{2n}^{(2)\dag}}(p))}.
\end{align}
\end{prop}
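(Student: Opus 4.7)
The plan is to adapt the argument for Proposition~\ref{P:A2}, which itself follows the pattern of Proposition~\ref{P:quantum}: first establish the order formula of Lemma~\ref{L:ordera2dag}, then induct on $|J|$ to show $\ord(p_J)\ge\ord(p_\emptyset)$ with equality precisely on $\BPQ_{A_{2n}^{(2)\dag}}(u;m_\la)$, and finally extract the $v\to 0$ limit. For Lemma~\ref{L:ordera2dag} I would simply read off the $v$-order of each factor in \eqref{E:nsmacA2n2dagep}: the scalar prefactor contributes $-\ell(u_\la^{-1})+\ell(\dir{p_J})-|J|$; since each $\xi_j$ has positive $v$-order by Remarks~\ref{R:Yeigenvalue} and \ref{R:tlainversions}, every denominator $1-\xi_j$ or $1-\xi_j^2$ tends to $1$ and contributes $0$; the numerators $\xi_j$ for $j\in J^-$ and $\xi_j^2$ for $j\in J^{0-}$ contribute $\pair{2\rho^{\vee Y}}{-\bbeta_j}$ and $2\pair{2\rho^{\vee Y}}{-\bbeta_j}$ respectively, matching the definition \eqref{E:nwta2dag} of $\qwt_{A_{2n}^{(2)\dag}}$.

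For the inductive step, set $M=\max(J)$ and $\hJ=J\setminus\{M\}$, and analyze $\ord(p_J)-\ord(p_{\hJ})$ by cases. When $M\in J^+$ or $M\in J^-$ (with $i_M\ne 0$) the argument is identical to the $D_{n+1}^{(2)}$ analysis, giving $\ord(p_J)\ge\ord(p_{\hJ})$ with equality iff the step $\dir{p_{\hJ}}\to\dir{p_J}$ is a Bruhat cover or a quantum edge of $\QB(D_{n+1}^{(2)})$ respectively. When $M\in J^{0+}$ the quantum weight is unchanged and the inequality reduces to the usual Bruhat cover condition, consistent with the allowance of Bruhat edges from $\alpha_0$ in $\BPQ_{A_{2n}^{(2)\dag}}$.

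The main obstacle, and essentially the only new ingredient, is the case $M\in J^{0-}$, where the contribution to $\qwt_{A_{2n}^{(2)\dag}}$ is $2\beta_M$ rather than $\beta_M$. By Lemma~\ref{L:reducedorbitafex} and Remark~\ref{R:tlainversions}, $-\bbeta_M$ is a short positive root of $Y=Q(B_n)$; a direct computation in $B_n$ gives $2\rho^{\vee Y}=\sum_{i=1}^n 2(n-i+1)e_i^*$, whence $\pair{2\rho^{\vee Y}}{-\bbeta_M}\ge 2$ for every short positive root. Combined with the universal bound $\ell(\dir{p_J})-\ell(\dir{p_{\hJ}})\ge 1-\pair{2\rho^{\vee Y}}{-\bbeta_M}$, this yields
\begin{align*}
\ord(p_J)-\ord(p_{\hJ})\ge \pair{2\rho^{\vee Y}}{-\bbeta_M}\ge 2,
\end{align*}
so negative folds at $i_M=0$ always strictly raise the order and cannot appear in minimum-order paths. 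This is precisely the exclusion of $\alpha_0$-labelled quantum edges built into the definition of $\BPQ_{A_{2n}^{(2)\dag}}(u;m_\la)$. Since $\ord(p_\emptyset)=-\ell(u_\la^{-1})+\ell(\dir{u}u_\la^{-1})$, multiplying \eqref{E:nsmacA2n2dagep} by $v^{\ell(u_\la^{-1})-\ell(uu_\la^{-1})}$ and sending $v\to 0$ preserves exactly the minimum-order terms and produces \eqref{E:A2dagE}.
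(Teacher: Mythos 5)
Your proposal is correct and follows essentially the same route the paper intends: the paper leaves Proposition \ref{P:A2dag} unproved, relying on the reader to adapt the proofs of Propositions \ref{P:quantum} and \ref{P:A2}, and your argument is exactly that adaptation, with the key new case $M\in J^{0-}$ handled correctly (the factor $\xi_M^2$ forces $\ord(p_J)-\ord(p_{\hJ})\ge\pair{2\rho^{\vee Y}}{-\bbeta_M}>0$, so $\alpha_0$-labelled quantum edges are excluded from minimum-order paths). The explicit $B_n$ computation of $2\rho^{\vee Y}$ is harmless but unnecessary, since $\pair{2\rho^{\vee Y}}{\alpha}=2\,\mathrm{ht}(\alpha)\ge 2$ for any positive root $\alpha$.
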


\begin{ex}\label{X:a2dag} Consider the situation of Example \ref{X:a2} except for the root system $A_{2n}^{(2)\dagger}$. The only difference is that instead of truncating Bruhat edges associated with roots
$\beta_j$ with $i_j=0$, we cut quantum edges labeled by $\beta_j$ with $i_j=0$. Instead of cutting the edge $p_{1}\to p_{14}$ we cut the edge $p_{123}\to p_{1234}$. We have
\begin{align*}
  E_{(-1,0)}^{A_{2\cdot2}^{(2)\dag}}(X;q;0) = X^{(-1,0)} + X^{(0,-1)} + X^{(0,1)}  + X^{(1,0)} + X^{(0,0)}.
\end{align*}
\end{ex}

\section{At $v\to\infty$}
In this section we study the limit where all Hecke parameters are set to $v$ and then $v$ is sent to $\infty$.

\subsection{Duality of Macdonald polynomials}
Let $w_0\in W_0$ be the long element.

The following result is stated in \cite{Che:2005} in the dual untwisted case but the proof is the same for the general case.

\begin{prop}\label{P:star}
\cite[Prop. 3.3.3]{Che:2005} 
\begin{align}\label{E:starE}
  E_{-w_0(\la)}^* = v^{-1}_{w_0 u_\la^{-1}} v_{u_\la} T_{w_0} E_\la
\end{align}
where $(X^\mu)^* = X^{-\mu}$ for all $\mu\in X$, $q^* = q^{-1}$ and $v_\alpha^* = v_\alpha^{-1}$ for all $\alpha$.
\end{prop}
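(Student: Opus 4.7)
The strategy is to prove \eqref{E:starE} by extending the $*$-operation to a $\K$-algebra automorphism of $\HH(X,Y)$, using it to characterize both sides as joint eigenvectors for a common commuting family, and then matching one coefficient.

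First I would check that the formulas $\phi(X^\la)=X^{-\la}$, $\phi(T_i^X)=(T_i^X)^{-1}$ for $i\in I^X$, $\phi(\pi)=\pi$ for $\pi\in\Daut^X$, $\phi(q^{1/m})=q^{-1/m}$, $\phi(v_\bullet)=v_\bullet^{-1}$ extend to an involutive $\K$-algebra automorphism $\phi$ of $\HH(X,Y)$. The quadratic relations \eqref{E:quadred}--\eqref{E:quadnonred} are immediately preserved, and the commutation \eqref{E:commnonred} transforms, via an elementary rewrite of $\frac{1}{1-X^{-2\alpha_i}}$, into the commutation of $T_i^{-1}$ with $X^{-\la}$, which is equivalent to \eqref{E:phinonredneg}. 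On $\Pol$ one then has the compatibility $(h\cdot p)^*=\phi(h)\cdot p^*$ for $h\in\HH(X,Y)$ and $p\in\Pol$, checked on generators.

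Next I would match eigenvalues. Applying $*$ to $Y^\mu E_{-w_0(\la)}=\chi_{\mu,-w_0(\la)}E_{-w_0(\la)}$ yields $\phi(Y^\mu)E^*_{-w_0(\la)}=\chi^*_{\mu,-w_0(\la)}E^*_{-w_0(\la)}$. For the right-hand side of \eqref{E:starE}, the key ingredient is the conjugation identity $T_{w_0}\phi(Y^\mu)T_{w_0}^{-1}=Y^{-\mu}$ in $\HH(X,Y)$, verified on generators using that $\phi(T_{w_0})=(T_{w_0})^{-1}$ --- a consequence of $w_0=w_0^{-1}$, since reversing a reduced expression for $w_0$ again gives a reduced expression for $w_0$, forcing $\phi(T_{w_0})=T_{i_1}^{-1}\cdots T_{i_r}^{-1}=(T_{i_r}\cdots T_{i_1})^{-1}=(T_{w_0})^{-1}$. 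Combining this conjugation with the iterated intertwining identity $\psi_{w_0}Y^{w_0(\mu)}\psi_{w_0}^{-1}=Y^\mu$ (from \eqref{E:intertwinerY}) and the decomposition $T_i=\psi_i+G_i(Y)$, where $G_i(Y)$ acts as a scalar on each $Y$-eigenvector, one concludes that $\phi(Y^\mu)T_{w_0}E_\la=\chi^*_{\mu,-w_0(\la)}T_{w_0}E_\la$. Simplicity of the joint $Y$-spectrum on $\Pol$ then forces $T_{w_0}E_\la$ and $E^*_{-w_0(\la)}$ to be proportional.

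Finally, I pin down the proportionality constant by comparing the coefficient of $X^{w_0(\la)}$. On the left this is $1$, since $E_{-w_0(\la)}$ has $X^{-w_0(\la)}$-coefficient $1$. On the right, the length-additive factorization $w_0=(w_0 u_\la^{-1})u_\la$ (using the standard fact $\ell(w_0 u)=\ell(w_0)-\ell(u)$ for $u\in W_0$) gives $T_{w_0}=T_{w_0 u_\la^{-1}}T_{u_\la}$; iteratively applying \eqref{E:commnonred} on $E_\la=X^\la+(\text{Bruhat-lower terms})$ computes the $X^{w_0(\la)}$-coefficient of $T_{w_0}E_\la$ as $v_{w_0 u_\la^{-1}}v_{u_\la}^{-1}$, which the prefactor $v^{-1}_{w_0 u_\la^{-1}}v_{u_\la}$ in \eqref{E:starE} exactly cancels. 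The main obstacle is the conjugation identity $T_{w_0}\phi(Y^\mu)T_{w_0}^{-1}=Y^{-\mu}$ together with the transfer from the intertwiner statement to an eigenvalue statement for $T_{w_0}E_\la$; this is precisely the content of \cite[Prop.~3.3.3]{Che:2005} in the dual-untwisted reduced setting, and the argument extends to the not-necessarily-reduced case since the extra $v_{2\alpha_i}$ parameters transform symmetrically under $\phi$.
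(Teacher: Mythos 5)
The paper itself gives no proof of this proposition: it cites \cite[Prop.\ 3.3.3]{Che:2005} and asserts that Cherednik's argument carries over to the general not-necessarily-reduced, unequal-parameter setting. Your plan is exactly that standard argument --- extend $*$ to an automorphism $\phi$ of $\HH(X,Y)$ compatible with the action on $\Pol$, exhibit both sides of \eqref{E:starE} as joint $Y$-eigenvectors with the same eigenvalue, invoke generic simplicity of the joint spectrum, and fix the scalar by one coefficient. The verification that $\phi$ preserves \eqref{E:commnonred} via \eqref{E:phinonredneg}, and the normalization $[X^{w_0(\la)}]T_{w_0}E_\la = v_{w_0u_\la^{-1}}v_{u_\la}^{-1}$, are both fine (the latter also needs the triangularity of the support of $E_\la$ to see that only the top term $X^\la$ contributes to $X^{w_0(\la)}$ after applying $T_{w_0}$; say so).

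However, the conjugation identity at the heart of your eigenvalue computation is false as stated. The correct identity is $\phi(Y^\mu)=T_{w_0}\,Y^{w_0(\mu)}\,T_{w_0}^{-1}$, equivalently $T_{w_0}^{-1}\phi(Y^\mu)T_{w_0}=Y^{w_0(\mu)}$; you wrote $T_{w_0}\phi(Y^\mu)T_{w_0}^{-1}=Y^{-\mu}$, which is wrong both in the direction of conjugation and in having $-\mu$ where $w_0(\mu)$ belongs. The substitution $-\mu\leftrightarrow w_0(\mu)$ matters whenever $-1\notin W_0$ (types $A_{n\ge 2}$, $D_{2k+1}$, $E_6$), and the direction of conjugation matters in every type: already in type $A_1$, where $Y^{\omega}=\pi T_1$, one computes $T_1\phi(Y^{\omega})T_1^{-1}=\pi T_0T_1^{-2}$ while $Y^{-\omega}=\pi T_0^{-1}$, so your identity would force $T_0^2=T_1^2$. (To get the correct identity: for $\mu$ dominant one has $\phi(Y^\mu)=T_{t_{-\mu}}^{-1}$, and since $t_{-\mu}$ is minimal in its coset $t_{-\mu}W_0$, lengths add in $T_{t_{-\mu}}T_{w_0}=T_{t_{-\mu}w_0}=T_{w_0t_{-w_0(\mu)}}=T_{w_0}T_{t_{-w_0(\mu)}}$.) With the corrected identity your argument closes up and the detour through $T_i=\psi_i+G_i(Y)$ becomes unnecessary: applying $\phi$ to $Y^\mu E_{-w_0(\la)}=\chi_{\mu,-w_0(\la)}E_{-w_0(\la)}$ yields $Y^{\mu}\bigl(T_{w_0}^{-1}E^*_{-w_0(\la)}\bigr)=\chi^*_{w_0(\mu),\,-w_0(\la)}\,T_{w_0}^{-1}E^*_{-w_0(\la)}$, and $\chi^*_{w_0(\mu),-w_0(\la)}=\chi_{\mu,\la}$ by Proposition \ref{P:eigen} together with $u_{-w_0(\la)}^{-1}=w_0u_\la^{-1}w_0$ and $w_0(2\rho^{\vee Y})=-2\rho^{\vee Y}$, whence $T_{w_0}^{-1}E^*_{-w_0(\la)}$ is proportional to $E_\la$ as desired.
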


Let $[X^\mu]f$ denote the coefficient of $X^\mu$ in $f$.

\begin{lem}\label{L:dual} For all weights $\la,\mu\in X$,
\begin{align}\label{E:dual}
  [X^{-w_0(\mu)}] E_{-w_0(\la)}(X;q;v_\bullet) =
  [X^\mu] E_\la(X;q;v_\bullet).
\end{align}
\end{lem}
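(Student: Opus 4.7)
The plan is to prove the stronger statement $\phi(E_\la) = E_{-w_0(\la)}$, where $\phi\colon \Pol \to \Pol$ is the $\K$-algebra automorphism defined by $\phi(X^\mu) = X^{-w_0(\mu)}$ (with trivial action on scalars). The lemma then follows at once by equating the coefficient of $X^{-w_0(\mu)}$ on each side, since $[X^{-w_0(\mu)}]\phi(E_\la) = [X^\mu]E_\la$.

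To prove $\phi(E_\la) = E_{-w_0(\la)}$, I would invoke the uniqueness of $E_{-w_0(\la)}$ as the simultaneous eigenvector of the $Y$-operators with eigenvalues $\chi_{\eta,-w_0(\la)}$ coming from Proposition~\ref{P:eigen}, normalized so that the monomial $X^{-w_0(\la)}$ has coefficient $1$. The normalization is automatic since $\phi$ permutes monomials and $[X^\la]E_\la = 1$. The substantive part is the eigenvalue condition, which I would obtain by showing that conjugation by $\phi$ sends the Hecke generators to their Dynkin-twisted versions: $\phi\circ T_i^X = T_{i^\ast}^X \circ \phi$ on $\Pol$ for all $i \in I^X$, where $i \mapsto i^\ast$ is the affine Dynkin involution extending $w_0 \alpha_i = -\alpha_{i^\ast}$ on $I_0$ (with $0^\ast = 0$ in all untwisted and dual untwisted types, and similarly in the Koornwinder case used for mixed types). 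For $i \in I_0$, this is a direct calculation from \eqref{E:commred} (or \eqref{E:commnonred} in the doubled case), using the $W_0$-orbit invariance $v_{\alpha_i} = v_{\alpha_{i^\ast}}$, $v_{2\alpha_i} = v_{2\alpha_{i^\ast}}$ of Hecke parameters. For the affine generator $T_0^X$, one uses the presentation $s_0^X = t_{\vartheta^Y} s_{\vartheta^Y}$ from \eqref{E:s0Y} together with the identity $w_0(\vartheta^Y) = -\vartheta^Y$ to reduce to the finite case.

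Granted the intertwining, conjugation by $\phi$ sends $Y^\eta$ to $Y^{-w_0(\eta)}$ (where $-w_0$ denotes the induced involution on $Y$, fixing $\delta^Y$), so
\[
Y^\eta\, \phi(E_\la) \;=\; \phi\bigl(Y^{-w_0(\eta)} E_\la\bigr) \;=\; \chi_{-w_0(\eta),\la}\, \phi(E_\la).
\]
It remains to verify the eigenvalue identity $\chi_{-w_0(\eta),\la} = \chi_{\eta,-w_0(\la)}$. Using formula \eqref{E:chiextended} of Proposition~\ref{P:eigen} and the identity $u_{-w_0(\la)} = w_0 u_\la w_0$ (which follows from the defining property of $u_\la$, since $(w_0 u_\la w_0)(-w_0(\la)) = -w_0(u_\la(\la)) \in -X_+$ with the same length as $u_\la$), this reduces to the basic fact $w_0(2\rho^{\vee Y}_k) = -2\rho^{\vee Y}_k$, which is immediate because $w_0$ sends every positive coroot to its negative. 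The $q$-exponent matches using $W_0$-invariance of the pairing $(\cdot,\cdot)$. By uniqueness of normalized simultaneous $Y$-eigenvectors, $\phi(E_\la) = E_{-w_0(\la)}$, and the lemma follows.

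The main obstacle is the verification of the intertwining property for the affine generator $T_0^X$ --- the finite-type calculations in $I_0$ are routine, but the affine check requires careful tracking of how $\phi$ interacts with the translation lattice and the automorphism group $\Daut^X$. This is especially delicate in the mixed type situations of \S\ref{SS:A2}--\S\ref{SS:A2dag}, where the DAHA is realized as a specialization of the Koornwinder DAHA and one must check that the Dynkin involution respects the specialization of the doubled Hecke parameters.
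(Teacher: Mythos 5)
Your argument is correct, but it reaches the conclusion by a different route than the paper's. Both proofs rest on the same diagram symmetry: the involution $i\mapsto i^*$ of $I_0$ determined by $w_0 s_i w_0 = s_{i^*}$, extended to the affine diagram by $0^*=0$ (valid because $-w_0(\theta^X)=\theta^X$), together with the induced map $\la\mapsto -w_0(\la)$. The paper transports the \emph{defining formula} $\tE_\la=\psi_{m_\la}\bo$ of \eqref{E:tEdef} through this automorphism: since $\psi_i\mapsto\psi_{i^*}$ and $m_\la\mapsto m_{-w_0(\la)}$ in $W(\tY)$, one gets $\phi(\tE_\la)=\tE_{-w_0(\la)}$ directly, with the normalizing constants matching because $u_{-w_0(\la)}=w_0u_\la w_0$; no spectral theory of the $Y$'s is needed. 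You instead transport the $Y$-eigenvalue characterization and invoke uniqueness of normalized simultaneous $Y$-eigenvectors. Your eigenvalue computation is right --- $u_{-w_0(\la)}=w_0u_\la w_0$ and $w_0(2\rho^{\vee Y}_k)=-2\rho^{\vee Y}_k$ do give $\chi_{-w_0(\eta),\la}=\chi_{\eta,-w_0(\la)}$ from \eqref{E:chiextended} --- but the uniqueness step requires the additional (standard, though unstated in the paper) fact that the $Y$-spectrum on $\Pol$ is simple for generic parameters; since \eqref{E:dual} is an identity of rational functions in $q$ and the $v_\al$, proving it generically suffices, so this is a legitimate if slightly heavier input. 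What your route buys is that you never have to analyze how the intertwiners $\psi_i$ transform, only the $T_i^X$ and the $Y^\eta$; what the paper's route buys is brevity, since the intertwiner construction hands you the polynomial itself rather than just its eigenline. One simplification to note: for the affine generator you do not need the translation-times-reflection decomposition of $s_0^X$; the action of $T_0^X$ on $\Pol$ is already given by \eqref{E:commnonred} with $i=0$ together with $T_0^X\bo=v_{\al_0}\bo$, and the required commutation with $\phi$ follows from $\phi(X^{\al_0^X})=X^{\al_0^X}$ and $\phi\circ s_0^X=s_0^X\circ\phi$ on $\tX$, both consequences of $-w_0(\theta^X)=\theta^X$.
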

\begin{proof} Conjugation by $w_0$ induces an automorphism of the finite Dynkin diagram $*: I_0 \to I_0$ defined by
$w_0 s_i w_0 = s_{i^*}$. This induces an automorphism of $X$ by $\la\mapsto -w_0\la$. Defining $0^*=0$ we have an affine Dynkin automorphism fixing $0$. One may check that the intertwiner $\psi_{m_\la}$ (viewed, say, as an element of $\mathrm{End}(\K\tX)$) is sent to $\psi_{m_{-w_0(\la)}}$ under this automorphism. The Lemma follows.
\end{proof}

\begin{prop}\label{P:alcovepathdual}
For $\la\in X$, there is a bijection $*:\BP(w_0; m_\la)\to \BP(\id; m_\la)$ denoted $p_J\mapsto p_J^*$ such that $z_k^* = w_0 z_k$ and $J^{*+}=J^-$ and $J^{*-}=J^+$ where $J^{*\pm}$ indicates sets of positive and negative folds for $p^*_J$. Moreover
$\ep(p^*_J)=w_0(\ep(p_J))$, $\wt(p^*_J)=w_0(\wt(p_J))$, and $\dir{p^*_J}=w_0(\dir{p_J})$.
\end{prop}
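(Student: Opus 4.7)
The plan is to define the map $*$ directly by keeping the same subset $J\subset\{1,2,\dotsc,\ell\}$ (and the same reduced factorization $m_\la=\pi^Y s_{i_1}\dotsm s_{i_\ell}$) but changing the base point from $u=w_0$ to $u=\id$. Since alcove paths are parameterized by such subsets $J$, this is visibly a bijection of sets $\BP(w_0;m_\la)\to\BP(\id;m_\la)$ with inverse given by multiplication by $w_0$ again (so $**=\id$).

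The key verification is that the various quantities transform as claimed. Writing $z_0,\dotsc,z_r$ for the sequence \eqref{E:zdef} associated with $p_J$ and $z_0^*,\dotsc,z_r^*$ for that of $p_J^*$, we have $z_0=w_0 m_\la$ and $z_0^*=m_\la$, so $z_0^*=w_0 z_0$. Since both sequences are built by right-multiplication by $s_{\beta_{j_1}},s_{\beta_{j_2}},\dotsc$ (which depend only on $J$ and the reduced factorization, not on the base point), induction on $k$ gives $z_k^*=w_0 z_k$ for all $k$. In particular $\ep(p_J^*)=w_0\ep(p_J)$. The identities $\wt(p_J^*)=w_0(\wt(p_J))$ and $\dir{p_J^*}=w_0(\dir{p_J})$ then follow from the semidirect product decomposition: if $z_r=t_\mu u$ with $\mu\in X$ and $u\in W_0$, then $w_0 z_r = w_0 t_\mu u = t_{w_0(\mu)}(w_0 u)$, so $\wt$ and $\dir$ are each conjugated by $w_0$.

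For the sign statement, recall from \eqref{E:signz} that $\epsilon_{j_m}$ is determined by whether the classical projection of $z_m\beta_{j_m}$ lies in $R_+(Y)$ or in $-R_+(Y)$. Since $z_m^*\beta_{j_m}=w_0 z_m\beta_{j_m}$ and since $w_0$ preserves the $\delta^Y$-coefficient while sending $R_+(Y)$ to $-R_+(Y)$, the classical projection flips sign. Hence $\epsilon_{j_m}^*=-\epsilon_{j_m}$ for every $j_m\in J$, which is the claim $J^{*+}=J^-$ and $J^{*-}=J^+$.

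The only mildly subtle point is to confirm that the classical projection of $z_m\beta_{j_m}$ is actually nonzero, so that the dichotomy in \eqref{E:signz} applies; this is automatic because $\beta_{j_m}$ is a real affine root and $z_m$ acts on $R(\tY)$, so $z_m\beta_{j_m}\in R(\tY)$ has nonzero classical part. No other obstacle arises: the construction is essentially a bookkeeping consequence of $W(\tY)\cong X\rtimes W_0$ together with the fact that $w_0$ is the longest element of $W_0$.
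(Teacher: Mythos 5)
Your proposal is correct and follows essentially the same route as the paper: the bijection fixes $J$ and changes the base point from $w_0$ to $\id$, everything else is bookkeeping in $W(\tY)\cong X\rtimes W_0$, and the sign reversal rests on exactly the fact the paper cites, namely $w_0(\Z\delta^Y\pm R_+(Y))=\Z\delta^Y\mp R_+(Y)$. You merely spell out the induction on $z_k^*=w_0 z_k$ and the conjugation of $\wt$ and $\dir$ that the paper leaves implicit.
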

\begin{proof} This follows from the definitions.
The negation of signs holds since
$w_0(\Z\delta^Y\pm R_+(Y)) = \Z\delta^Y \mp R_+(Y)$.
\end{proof}

\subsection{Specialization at $v=\infty$}
Let $(X,Y)$ be a double affine datum, $\la\in X$,
$m_\la= \pi^Y s_{i_1}\dotsm s_{i_\ell}$ reduced, $\wa=(i_1,\dotsc,i_\ell)$.
Let $\BPQR(u; m_\la)$ be the set of alcove paths $p_J\in \BP(u; m_\la)$ which project to the \textit{reverse} of a path in $\QB(\tX)$: letting $z_0,\dotsc,z_r$ as before, we have the path in $\QB(\tX)$
\begin{align}
  \dir{z_0} \overset{-\bbeta_{j_1}}{\leftarrow} \dir{z_1}
  \overset{-\bbeta_{j_2}}{\leftarrow} \dotsm
  \overset{-\bbeta_{j_r}}{\leftarrow} \dir{z_r}.
\end{align}
We define 
\begin{align}
\label{E:dqwtdef}
\qwt^*(p_J) = \sum_{j\in J^+} \beta_j.
\end{align}
The following result says that the $v=\infty$ specialization is like the $v=0$ specialization, except that the alcove path must project to the reverse of a path in the quantum Bruhat graph. The edges in the quantum Bruhat graph that contribute to the power of $q$ are still the quantum edges, but since these edges and those in the alcove path go in opposite directions, positive folds contribute to the power of $q$ rather than negative.

\begin{prop}\label{P:RYinfinity}
Let $\la\in X$, $m_\la=\pi^Y s_{i_1}\dotsm s_{i_\ell}$ for $\wa=(i_1,\dotsc,i_\ell)$ reduced. Then
\begin{align}\label{E:RYinfinity}
  E_\la(X;q^{-1};\infty) &= \sum_{p\in \BPQR(\id; m_\la)} X^{\wt(p)} q^{\deg(\qwt^*(p))}.
\end{align}
\end{prop}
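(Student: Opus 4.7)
The plan is to reduce the $v\to\infty$ limit of $E_\la$ to the $v\to0$ limit (Corollary \ref{C:quantum}) via the duality $\lambda\mapsto-w_0(\lambda)$ together with the coefficient symmetry of Lemma \ref{L:dual}, and then convert the resulting sum over $\BPQ(w_0;m_\la)$ into a sum over $\BPQR(\id;m_\la)$ by means of the $*$-bijection of Proposition \ref{P:alcovepathdual}.

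First I would apply Corollary \ref{C:quantum} at $u=w_0$ to obtain
\[
\lim_{v\to0} v^{-1}_{w_0 u_\la^{-1}}\, X^{w_0}\tE_\la \;=\; \sum_{p\in\BPQ(w_0;m_\la)} X^{\wt(p)}\,q^{\deg(\qwt(p))}.
\]
Since $w_0\in W_0$, Remark \ref{R:W0} gives $X^{w_0}=T_{w_0}$, and Proposition \ref{P:star} together with $E_\la=v^{-1}_{u_\la^{-1}}\tE_\la$ and the equal-parameter identity $v_{u_\la}=v_{u_\la^{-1}}$ yields
\[
T_{w_0}\tE_\la \;=\; v_{w_0 u_\la^{-1}}\,E_{-w_0(\la)}^*.
\]
Substituting and using $(X^\mu)^*=X^{-\mu}$, $q^*=q^{-1}$, $v^*=v^{-1}$, and sending $v\to 0$ (equivalently $v^{-1}\to\infty$) yields
\[
E_{-w_0(\la)}(X;q^{-1};\infty) \;=\; \sum_{p\in\BPQ(w_0;m_\la)} X^{-\wt(p)}\,q^{\deg(\qwt(p))}.
\]
Then I would invoke Lemma \ref{L:dual}, which identifies the coefficient of $X^{-w_0(\mu)}$ in $E_{-w_0(\la)}$ with the coefficient of $X^\mu$ in $E_\la$. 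Matching $-\wt(p)=-w_0(\mu)$, so that $\mu=w_0(\wt(p))$, gives
\[
E_\la(X;q^{-1};\infty) \;=\; \sum_{p\in\BPQ(w_0;m_\la)} X^{w_0(\wt(p))}\,q^{\deg(\qwt(p))}.
\]

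To finish I would use the bijection $*\colon\BP(w_0;m_\la)\to\BP(\id;m_\la)$ of Proposition \ref{P:alcovepathdual}. Since $\wt(p^*)=w_0(\wt(p))$ and $J^{*\pm}=J^\mp$, the identity $\qwt^*(p^*)=\sum_{j\in J^{*+}}\beta_j=\sum_{j\in J^-}\beta_j=\qwt(p)$ is immediate, so the exponents of $X$ and $q$ already match \eqref{E:RYinfinity} termwise. What remains is to verify that $*$ restricts to a bijection $\BPQ(w_0;m_\la)\to\BPQR(\id;m_\la)$.

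The main obstacle is this last verification. The point is that left multiplication by $w_0$ defines an involution on the vertex set of $\QB(\tX)$ that reverses the orientation of every edge while preserving both its label and its type (Bruhat vs.\ quantum). Indeed, for a Bruhat cover $w\to ws_\al$ one computes $\ell(w_0 ws_\al)=\ell(w_0)-\ell(ws_\al)=\ell(w_0 w)-1$, so $w_0 ws_\al\to w_0 w$ is a Bruhat cover with the same label $\al$; an analogous calculation with $\ell(ws_\al)=\ell(w)-\pair{2\rho^{\vee Y}}{\al}+1$ shows that quantum edges also reverse direction while retaining their label and quantum character. Applying this to the path $\dir{z_0}\to\dir{z_1}\to\cdots\to\dir{z_r}$ associated with $p_J\in\BPQ(w_0;m_\la)$, and using $\dir{z_k^*}=w_0\,\dir{z_k}$ from Proposition \ref{P:alcovepathdual}, one obtains precisely the reverse $\QB(\tX)$-path required for $p_J^*\in\BPQR(\id;m_\la)$. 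The same argument run backwards shows the restriction is surjective, completing the bijection and hence the proof.
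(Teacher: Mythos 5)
Your proposal is correct and follows essentially the same route as the paper's proof: combine Corollary \ref{C:quantum} at $u=w_0$ with Proposition \ref{P:star} and Lemma \ref{L:dual}, then transport the sum over $\BPQ(w_0;m_\la)$ to $\BPQR(\id;m_\la)$ via the $*$-bijection of Proposition \ref{P:alcovepathdual}. The only difference is cosmetic: where the paper cites \cite{LNSSS} for the fact that left multiplication by $w_0$ gives an arrow-reversing, label- and type-preserving involution of the quantum Bruhat graph, you verify it directly with the length identity $\ell(w_0u)=\ell(w_0)-\ell(u)$, which is a perfectly valid (and self-contained) substitute.
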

\begin{proof} First set all Hecke parameters equal to $v$.
We have
\begin{align*}
  [X^\mu] E_\la(X;q^{-1};v^{-1})
  &= [X^{-w_0(\mu)}] E_{-w_0(\la)}(X;q^{-1};v^{-1}) \\
  &= [X^{w_0(\mu)}] E_{-w_0(\la)}^* \\
  &= [X^{w_0(\mu)}] v^{\ell(w_0)-2\ell(u_\la^{-1})} T_{w_0} E_\la.
\end{align*}
by Lemma \ref{L:dual} and Proposition \ref{P:star}.
Now send $v\to0$. We have
\begin{align*}
  [X^\mu] E_\la(X;q^{-1};\infty) 
  &= [X^{w_0(\mu)}] \sum_{p\in \BPQ(w_0; m_\la)} X^{\wt(p)} q^{\deg(\qwt(p))}
\end{align*}
by Corollary \ref{C:quantum} with $u=w_0$ and Remark \ref{R:W0}.

By \cite{LNSSS} the long element $w_0\in W_0$ induces an arrow-reversing involution on the quantum Bruhat graph that preserves the edge labels and sends Bruhat arrows to Bruhat and quantum arrows to quantum. It follows that the bijection $*$ of Proposition \ref{P:alcovepathdual} $p\mapsto p^*$ restricts to a bijection $\BPQ(w_0; m_\la)\to \BPQR(\id; m_\la)$ such that $\wt(p^*)=w_0(\wt(p))$ and $\qwt^*(p^*) = \qwt(p)$, the latter holding because folds changed signs. The Proposition follows.
\end{proof}

In particular, \eqref{E:RYinfinity} shows that the coefficients of $E_\la(X;q^{-1};\infty)$ belong to $\Z_{\geq 0}[q]$, confirming a conjecture from \cite{CO} (and extending it to arbitrary affine type).

\begin{rem}
For any $\la\in X_-$ and $\mu\in W_0(\la)$, the coefficient $[X^\mu]E_\la(X;q^{-1};\infty)$ is a nonnegative power of $q$.
This is \cite[Corollary 2.6($i$)]{CO} in the dual untwisted case and \cite[Theorem 3.1]{CF} in the general reduced case.
This fact also follows directly from the specialization at $q=1$ of \cite[(5.7.8))]{Mac:2003}, the formula which expresses $P_{w_0(\la)}$ as a sum of $E_\mu$ for $\mu\in W_0(\la)$.
Returning to \ref{E:RYinfinity}, we deduce that for $\la\in X_-$ and $\mu\in W_0(\la)$, there exists a unique element $p\in\BPQR(\id;m_\la)$ satisfying $\wt(p)=\mu$.

We believe that a similar statement holds for arbitrary $\la\in X$
and $\mu\in W_0(\la)$, namely that $[X^\mu]E_\la(X;q^{-1};\infty)$ is either $0$ or a nonnegative power of $q$, and the latter happens only if $u_\mu\geq u_\la$ in Bruhat order on $W_0$ (but not if and only if). This should follow from the shellability of the quantum Bruhat graph \cite{BFP}.
\end{rem}

\begin{rem}
For any $\la\in X_+$, one has $P_\la(X;q;v) = P_\la(X;q^{-1};v^{-1})$ \cite[(5.3.2)]{Mac:2003}. Hence the specializations of $P_\la$ at $v=0$ and $v=\infty$ (resp. $q=0$ and $q=\infty$) are identical, up to the substitution $q\mapsto q^{-1}$ (resp. $v\mapsto v^{-1}$).
\end{rem}

\comment{
\begin{align}\label{E:Psum}
  P_\la =
  \sum_{\mu\in W_0\la} \prod_{\al\in \Inv(u_\mu)} \frac{v_\al^2-\chi_{-\al^Y,\,\mu}}{1-\chi_{-\al^Y,\,\mu}}E_\mu
\end{align}
where $\al^Y\in (R_0)_+(Y)$ satisfies $s_{\al^Y}=s_\al$.
}

\section{At $q^\pm \to 0$}

\subsection{At $q=0$}
We set all Hecke parameters to $v$ and set $q$ to $0$ in $E_\la(X;q;v)$. The following is due to Schwer \cite{Sc} for $P_\la(X;0;v)$. Let $\BP^+(u; m_\la)$ be the set of alcove paths with all folds positive. Define $\BP^-$ similarly but with all folds negative.

\begin{thm} \label{T:Eq0}
For $u\in W_0$ and $\la\in X$, let
$m_\la = \pi^Y s_{i_1}\dotsm s_{i_\ell}$ with $\wa=(i_1,\dotsc,i_\ell)$ reduced. Then
\begin{align}
  X^u E_\la(X;0;v) = v^{-\ell(u_\la^{-1})} \sum_{p_J\in \BP^+(u; m_\la)} X^{\wt(p_J)} v^{\ell(\dir{p_J})} (v^{-1}-v)^{|J|}.
\end{align}
\end{thm}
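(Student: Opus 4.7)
The plan is to deduce this from the equal-parameter Ram-Yip formula \eqref{E:RYequalparam} and a careful analysis of the $q\to 0$ behavior of the rational factors. Recall that in the equal-parameter specialization, Theorem \ref{T:TMac} reads
\begin{align*}
X^u(\tE_\la) = \sum_{p_J\in \BP(u;m_\la)} X^{\wt(p_J)} v^{\ell(\dir{p_J})} (v^{-1}-v)^{|J|} \prod_{j\in J}(1-\xi_j)^{-1}\prod_{j\in J^-}\xi_j,
\end{align*}
where by Remark \ref{R:Yeigenvalue} one has $\xi_j = q^{\deg(\beta_j)}v^{\pair{2\rho^{\vee Y}}{-\bbeta_j}}$.

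First I would invoke Remark \ref{R:tlainversions}: since $m_\la\in W(\tY)$ is Grassmannian, every inversion satisfies $\beta_j\in\Z_{>0}\delta^Y - R_+(Y)$, so $\deg(\beta_j)\ge 1$ and hence $\xi_j$ is divisible by $q$. Therefore the factors $(1-\xi_j)^{-1}$ are well-defined power series in $q$ (over $\K[v^{\pm 1}]$) with constant term $1$, while each factor $\xi_j$ appearing in the product over $J^-$ vanishes at $q=0$. Setting $q=0$ then kills every summand with $J^-\ne\emptyset$, and for a summand with $J^-=\emptyset$ (that is, $p_J\in\BP^+(u;m_\la)$) the rational factor $\prod_{j\in J}(1-\xi_j)^{-1}$ specializes to $1$. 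This yields
\begin{align*}
X^u(\tE_\la)\big|_{q=0} = \sum_{p_J\in \BP^+(u;m_\la)} X^{\wt(p_J)} v^{\ell(\dir{p_J})} (v^{-1}-v)^{|J|}.
\end{align*}

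Finally I would convert $\tE_\la$ into $E_\la$ using the normalization \eqref{E:Edef}: in the equal-parameter case $v_{u_\la^{-1}} = v^{\ell(u_\la^{-1})}$, so $E_\la = v^{-\ell(u_\la^{-1})}\tE_\la$. Multiplying by this scalar (which is independent of $q$) gives the claimed formula. The only substantive point is the positivity of $\deg(\beta_j)$, which rests on the Grassmannian property of $m_\la$ already recorded in \eqref{E:GrassInv}; there are no real obstacles beyond this bookkeeping.
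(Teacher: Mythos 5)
Your proposal is correct and follows exactly the paper's (very terse) proof, which cites the equal-parameter Ram--Yip formula of Remark \ref{R:RYreduced} together with the Grassmannian inversion property of Remark \ref{R:tlainversions} to conclude that at $q=0$ each $\xi_j$ vanishes, killing all terms with $J^-\neq\emptyset$ and trivializing the denominators. The normalization step $E_\la = v^{-\ell(u_\la^{-1})}\tE_\la$ is also the intended one.
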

\begin{proof} This follows from Theorem \ref{T:TMac} using Remarks \ref{R:RYreduced} and \ref{R:tlainversions}.
\end{proof}

%\begin{rem}\label{R:q0poly} The expression in Theorem %\ref{T:Eq0} is a polynomial in $t=v^2$. This can be proved %by induction on $|J|$. At $J=\emptyset$ the summand is %$X^{u\la}$. If a fold is added then $|J|$ increases by $1$ %but $\ell(\dir{p_J})$ increases by an odd number since %reflections have odd length.
%\end{rem}

\subsection{At $q=\infty$}
Suppose all Hecke parameters have been set equal to $v$
and $q$ is sent to $\infty$.

\begin{thm} For all $\la\in X$,
\begin{align}
\label{E:qinf}
E_\la(X;\infty;v^{-1}) 
&=v^{\ell(w_0)-2\ell(u_\la)} \sum_{p_J\in \BP^-(\id; m_\la)} X^{\wt(p_J)} v^{\ell(w_0(\dir{p_J}))} (v^{-1}-v)^{|J|}
\end{align}
where $\BP^{-}(\id;m_\la)$ is the subset of alcove paths with all folds negative.  
\end{thm}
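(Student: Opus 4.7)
The plan is to deduce the $q=\infty$ formula from the $q=0$ formula (Theorem \ref{T:Eq0}) via the duality Proposition \ref{P:star} and Lemma \ref{L:dual}, and then to rewrite the resulting sum over $\BP^+(w_0;m_\la)$ as a sum over $\BP^-(\id;m_\la)$ using the involution $*$ of Proposition \ref{P:alcovepathdual}. This parallels the argument given for Proposition \ref{P:RYinfinity}, but with the roles of the $v$-limit and the $q$-limit interchanged.

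Combining Lemma \ref{L:dual} with Proposition \ref{P:star} yields, for each $\mu\in X$, the identity
\begin{equation*}
[X^\mu]\,E_\la(X;q^{-1};v^{-1}) \;=\; v^{\ell(w_0)-2\ell(u_\la)}\,[X^{w_0 \mu}]\,T_{w_0}E_\la(X;q;v),
\end{equation*}
which is precisely the intermediate identity used in the proof of Proposition \ref{P:RYinfinity}. Sending $q\to 0$ converts the LHS into $[X^\mu]\,E_\la(X;\infty;v^{-1})$ and reduces the problem to evaluating $T_{w_0}E_\la(X;0;v)$. By Remark \ref{R:W0} one has $T_{w_0}=X^{w_0}$, and Theorem \ref{T:Eq0} applied with $u=w_0$ gives
\begin{equation*}
T_{w_0}E_\la(X;0;v) \;=\; v^{-\ell(u_\la^{-1})}\sum_{p\in\BP^+(w_0;m_\la)} X^{\wt(p)}\,v^{\ell(\dir{p})}\,(v^{-1}-v)^{|J|}.
\end{equation*}

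To put this into the form stated in the theorem, apply the involution $p\mapsto p^*$ of Proposition \ref{P:alcovepathdual}. This bijection sends $\BP^+(w_0;m_\la)$ onto $\BP^-(\id;m_\la)$ (since positive and negative folds are interchanged), preserves $|J|$, and satisfies $\wt(p^*)=w_0\,\wt(p)$ and $\dir{p^*}=w_0\,\dir{p}$. Reindexing the sum by $p^*$ and then extracting the coefficient of $X^{w_0\mu}$ picks out exactly the paths $p^*\in\BP^-(\id;m_\la)$ with $\wt(p^*)=\mu$, while $v^{\ell(\dir{p})}$ is transformed into $v^{\ell(w_0(\dir{p^*}))}$. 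Summing over $\mu$ assembles the desired expression.

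The main bookkeeping challenge is combining the scalar prefactor coming from Proposition \ref{P:star} with the $v^{-\ell(u_\la^{-1})}$ coming from Theorem \ref{T:Eq0} (and the normalization $E_\la=v_{u_\la^{-1}}^{-1}\tE_\la$ implicit there), using the length identities $\ell(u_\la)=\ell(u_\la^{-1})$ and $\ell(w_0w)=\ell(w_0)-\ell(w)$ for $w\in W_0$, so that the overall coefficient matches the stated $v^{\ell(w_0)-2\ell(u_\la)}$. Once the prefactors are reconciled, the proof is essentially a mechanical consequence of the three tools above.
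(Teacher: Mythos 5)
Your argument follows the paper's proof step for step: the same duality identity obtained from Lemma \ref{L:dual} and Proposition \ref{P:star}, the same specialization $q\to 0$ feeding into Theorem \ref{T:Eq0} with $u=w_0$ via $T_{w_0}=X^{w_0}$, and the same bijection $\BP^+(w_0;m_\la)\to\BP^-(\id;m_\la)$ from Proposition \ref{P:alcovepathdual}. In outline there is nothing to add.

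The one step you explicitly defer --- ``reconciling the prefactors'' --- is, however, precisely the step that does not close as you assert, so you cannot wave it through as mechanical bookkeeping. Multiplying the prefactor $v^{\ell(w_0)-2\ell(u_\la)}$ of your intermediate identity by the $v^{-\ell(u_\la^{-1})}$ supplied by Theorem \ref{T:Eq0} gives $v^{\ell(w_0)-3\ell(u_\la)}$, not $v^{\ell(w_0)-2\ell(u_\la)}$, and neither $\ell(u_\la)=\ell(u_\la^{-1})$ nor $\ell(w_0w)=\ell(w_0)-\ell(w)$ repairs the mismatch. Worse, the intermediate identity itself carries the reciprocal of the correct exponent: testing it at $\la=\mu=0$, the left side is $1$ while the right side is $v^{c}\,[X^{0}]T_{w_0}E_0=v^{c+\ell(w_0)}$, forcing $c=2\ell(u_\la)-\ell(w_0)$ rather than $\ell(w_0)-2\ell(u_\la)$. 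Carrying the corrected exponents through your own argument produces the overall prefactor $v^{\ell(u_\la)-\ell(w_0)}$; a rank-one check confirms this (for $\tX=A_1^{(1)}$ and $\la=-\omega$ one has $E_{-\omega}(X;\infty;v^{-1})=X^{-\omega}$, whereas the displayed right-hand side of \eqref{E:qinf} evaluates to $v^{2\ell(w_0)}X^{-\omega}$). To be fair, the paper's own proof elides the same points --- it quotes the intermediate identity with the inverted exponent and silently drops the $v^{-\ell(u_\la^{-1})}$ from Theorem \ref{T:Eq0} --- so your proposal reproduces the published argument faithfully, gaps included; but a complete proof must actually perform this computation, and doing so shows the normalizing power of $v$ in the statement needs to be corrected rather than matched.
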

\begin{proof} The proof of Theorem \ref{T:Eq0} shows that
\begin{align*}
  [X^\mu] E_\la(X;q^{-1};v^{-1})
  &= [X^{w_0(\mu)}] v^{\ell(w_0)-2\ell(u_\la^{-1})} T_{w_0} E_\la.
\end{align*}
Letting $q\to0$ while using Remark \ref{R:tlainversions} and Theorem \ref{T:Eq0} with $u=w_0$ we obtain
\begin{align*}
 & \quad\,\,[X^\mu] E_\la(X;\infty;v^{-1}) \\
  &= [X^{w_0(\mu)}] v^{\ell(w_0)-2\ell(u_\la^{-1})} 
  \sum_{p_J\in \BP^+(w_0;m_\la)} X^{\wt(p_J)} v^{\ell(\dir{p_J})} (v^{-1}-v)^{|J|}.
\end{align*}
The map $p_J\mapsto p_J^*$ in the proof of Lemma \ref{L:dual} restricts to a bijection
$\BP^+(w_0; m_\la) \to \BP^-( \id;m_\la)$ since the signs of the folds are reversed. The Theorem follows.
\end{proof}

\end{document}